\def\showauthornotes{2}
\def\smallfigs{1}
\def\showkeys{0}
\def\showdraftbox{1}
\def\showcolorlinks{1}
\def\usemicrotype{1}
\def\showfixme{1}
\def\arxivmode{0}
\def\fastmode{0}
\newcommand{\llangle}{\left\langle}
\newcommand{\rrangle}{\right\rangle}
\newtheorem{theorem}{Theorem}[section]
\newtheorem*{theorem*}{Theorem}
\newtheorem*{proposition*}{Proposition}
\newtheorem{lemma}[theorem]{Lemma}
\newtheorem*{lemma*}{Lemma}
\newtheorem{corollary}[theorem]{Corollary}
\newtheorem*{conjecture*}{Conjecture}
\newtheorem*{fact*}{Fact}
\newtheorem*{exercise*}{Exercise}
\newtheorem*{hypothesis*}{Hypothesis}
\theoremstyle{definition}
\newtheorem{definition}[theorem]{Definition}
\newtheorem{exercise-easy}[theorem]{Exercise}
\newtheorem{exercise-med}[theorem]{Exercise}
\newtheorem{exercise-hard}[theorem]{Exercise$^\star$}
\newtheorem*{claim*}{Claim}
\newtheorem*{remark*}{Remark}
\newtheorem{observation}[theorem]{Observation}
\newtheorem*{observation*}{Observation}
\let\mathbb\varmathbb
\definecolor{bleudefrance}{rgb}{0.01, 0.1, 1.0}
\definecolor{azure}{rgb}{0.0, 0.5, 1.0}
\newcommand{\savehyperref}[2]{\texorpdfstring{\hyperref[#1]{#2}}{#2}}
\newcommand{\Sref}[1]{\hyperref[#1]{\S\ref*{#1}}}
\newcommand{\mynotes}[1]{{\sffamily\small\color{teal}{#1}}\medskip}
\newcommand{\Authornote}[2]{{\sffamily\small\color{Maroon}{[#1: #2]}}\medskip}
\newcommand{\Authornotecolored}[3]{{\sffamily\small\color{#1}{[#2: #3]}}}
\newcommand{\Authorcomment}[2]{{\sffamily\small\color{gray}{[#1: #2]}}}
\newcommand{\Authorstartcomment}[1]{\sffamily\small\color{gray}[#1: }
\newcommand{\Authorfnote}[2]{\footnote{\color{red}{#1: #2}}}
\newcommand{\Authorfixme}[1]{\Authornote{#1}{\textbf{??}}}
\newcommand{\Authormarginmark}[1]{\marginpar{\textcolor{red}{\fbox{\Large #1:!}}}}
\newcommand{\myexplain}[1]{{\sffamily\small\color{red}{\noindent [Explanation:\medskip\newline \begin{quote}#1\hfill]\end{quote}}}\medskip}
\newcommand{\explain}[1]{{\sffamily\small\color{red}{#1}}\medskip}
\newcommand{\mynotes}[1]{}
\newcommand{\Authornote}[2]{}
\newcommand{\Authornotecolored}[3]{}
\newcommand{\Authorcomment}[2]{}
\newcommand{\Authorstartcomment}[1]{}
\newcommand{\Authorfnote}[2]{}
\newcommand{\Authorfixme}[1]{}
\newcommand{\Authormarginmark}[1]{}
\newcommand{\myexplain}[1]{}
\newcommand{\explain}[1]{}
\renewcommand{\myexplain}[1]{{\sffamily\small\color{red}{\noindent \begin{quote}{\bf Explanation:} \medskip\newline #1\end{quote}}}\medskip}
\newcommand{\tensor}{\otimes}
\newcommand{\textparen}[1]{\text{(#1)}}
\newcommand{\because}[1]{\textparen{because #1}}
\renewcommand{\because}[1]{\textparen{because #1}}
\newcommand{\seteq}{\mathrel{\mathop:}=}
\newcommand\bdot\bullet
\DeclareMathOperator{\vol}{vol}
\DeclareMathOperator{\dist}{dist}
\newcommand{\N}{\mathbb N}
\newcommand{\R}{\mathbb R}
\newcommand{\cC}{\mathcal C}
\newcommand{\cF}{\mathcal F}
\newcommand{\cS}{\mathcal S}
\newcommand{\cT}{\mathcal T}
\newcommand{\bI}{\bm I}
\newcommand{\bS}{\bm S}
\newcommand{\bT}{\bm T}
\newcommand{\bU}{\bm U}
\newcommand{\bbS}{\mathbb S}
\renewcommand{\leq}{\leqslant}
\renewcommand{\geq}{\geqslant}
\let\epsilon=\varepsilon
\numberwithin{equation}{section}
\newcommand\MYcurrentlabel{xxx}
\newcommand{\MYstore}[2]{%
  \global\expandafter \def \csname MYMEMORY #1 \endcsname{#2}%
}
\newcommand{\MYload}[1]{%
  \csname MYMEMORY #1 \endcsname%
}
\newcommand{\MYnewlabel}[1]{%
  \renewcommand\MYcurrentlabel{#1}%
  \MYoldlabel{#1}%
}
\newcommand{\MYdummylabel}[1]{}
\newcommand{\torestate}[1]{%
  \let\MYoldlabel\label%
  \let\label\MYnewlabel%
  #1%
  \MYstore{\MYcurrentlabel}{#1}%
  \let\label\MYoldlabel%
}
\newcommand{\restatetheorem}[1]{%
  \let\MYoldlabel\label
  \let\label\MYdummylabel
  \begin{theorem*}[Restatement of \prettyref{#1}]
    \MYload{#1}
  \end{theorem*}
  \let\label\MYoldlabel
}
\newcommand{\restatelemma}[1]{%
  \let\MYoldlabel\label
  \let\label\MYdummylabel
  \begin{lemma*}[Restatement of \prettyref{#1}]
    \MYload{#1}
  \end{lemma*}
  \let\label\MYoldlabel
}
\newcommand{\restateprop}[1]{%
  \let\MYoldlabel\label
  \let\label\MYdummylabel
  \begin{proposition*}[Restatement of \prettyref{#1}]
    \MYload{#1}
  \end{proposition*}
  \let\label\MYoldlabel
}
\newcommand{\restatefact}[1]{%
  \let\MYoldlabel\label
  \let\label\MYdummylabel
  \begin{fact*}[Restatement of \prettyref{#1}]
    \MYload{#1}
  \end{fact*}
  \let\label\MYoldlabel
}
\newcommand{\restate}[1]{%
  \let\MYoldlabel\label
  \let\label\MYdummylabel
  \MYload{#1}
  \let\label\MYoldlabel
}
\newcommand{\addreferencesection}{
  \phantomsection
\ifnum\stocmode=0
  \addcontentsline{toc}{section}{References}
\else
  \addcontentsline{toc}{section}{References \hspace*{1in} --------- End of extended abstract ---------}
\fi

}
\newcommand{\e}{\epsilon}
\let\origparagraph\paragraph
\renewcommand{\paragraph}[1]{\vspace*{-3pt}\origparagraph{#1.}}
\let\pref=\prettyref
\newcommand{\diam}{\mathrm{diam}}
\newcommand{\vertiii}[1]{{\left\vert\kern-0.25ex\left\vert\kern-0.25ex\left\vert #1 
          \right\vert\kern-0.25ex\right\vert\kern-0.25ex\right\vert}}
\newcommand{\cmnt}[1]{}
\newcommand{\len}{\mathrm{len}}
\newcommand{\region}[1]{\left\llbracket #1\right\rrbracket}
\renewcommand{\mathbb}{\vvmathbb}
\newcommand{\tileprod}{\circ}
\newcommand{\figdir}{figures}
   \renewcommand{\figdir}{med-figures}
   \renewcommand{\figdir}{small-figures}
\title{Non-existence of annular separators in geometric graphs}
\author{Farzam Ebrahimnejad\thanks{\texttt{febrahim@cs.washington.edu}}\hspace{0.8in}  James R. Lee\thanks{\texttt{jrl@cs.washington.edu}} \vspace{0.1in}\\
{\small Paul G. Allen School of Computer Science \& Engineering} \\ {\small University of Washington}}
\date{}
\begin{document}

\maketitle

\begin{abstract}
   Benjamini and Papasoglou (2011) showed that planar graphs with uniform polynomial volume growth admit
   $1$-dimensional annular separators: The vertices at graph distance $R$ from any vertex can be separated from those at distance $2R$
   by removing at most $O(R)$ vertices.
   They asked whether geometric $d$-dimensional graphs with
   uniform polynomial volume growth similarly admit $(d-1)$-dimensional annular separators when $d > 2$.
   We show that this fails in a strong sense: For any $d \geq 3$ and every $s \geq 1$, there is a
   collection of interior-disjoint spheres in $\R^d$ whose tangency graph $G$ has uniform polynomial growth,
   but such that all annular separators in $G$ have cardinality at least $R^s$.
\end{abstract}

\section{Introduction}

The well-known Lipton-Tarjan separator theorem \cite{LT79}
asserts that any $n$-vertex planar graph has a balanced separator with $O(\sqrt{n})$ vertices.
By the Koebe-Andreev-Thurston circle packing theorem, every planar graph can be realized
as the tangency graph of interior-disjoint circles in the plane.
One can define $d$-dimensional geometric graphs by analogy:  
Take a collection of ``almost non-overlapping'' bodies $\{S_v \subseteq \R^d : v \in V \}$,
where each $S_v$ is ``almost round,''
and the associated geometric graph contains an edge $\{u,v\}$ if $S_u$ and $S_v$ ``almost touch.''

As a prototypical example, suppose we require that every point $x \in \R^d$ is contained
in at most $k$ of the bodies $\{S_v\}$, each $S_v$ is a Euclidean ball, and two bodies are
considered adjacent whenever $S_u \cap S_v \neq \emptyset$.
These are precisely the intersection graphs of $k$-ply systems of balls, studied 
by Miller, Teng, Thurston, and Vavasis \cite{MTTV97}.
Those authors also provide a generalization of the Lipton-Tarjan separator theorem:
For $k = O(1)$, such an intersection graph contains a balanced separator of size $O(n^{1-1/d})$.

Similarly, finite-element graphs associated to simplicial complexes with bounded aspect ratio
can be viewed as subgraphs of geometric overlap graphs \cite{MTTV98}, and one again obtains
balanced separators of size $O(n^{1-1/d})$.
This covers a number of scenarios commonly arising in applications of the finite-element method;
we refer to the discussion of well-shaped meshes in \cite[\S 6.2]{spielman-teng}.

We will not be too concerned with the particular notion of geometric graph used
since our construction satisfies all these commonly employed sets of assumptions.
Indeed, it can be cast as the tangency graph of a sphere packing, where adjacent spheres
have uniformly comparable radii.  It can also be cast as the $1$-skeleton
of a $d$-dimensional simplicial complex whose simplices have uniformly bounded aspect ratio
as studied.  Such graphs were studied by, for instance, by Plotkin, Rao, and Smith \cite{PRS94} in their work
on shallow minors (see also the followup work \cite{Teng98}).

\medskip
\noindent
{\bf Annular separators.}
Note that the preceding results deal with {\em global} separators that separate
the entire graph into two roughly equal pieces.  In many settings, especially those arising
in physical simulation, it useful to
consider {\em local} separators.  Let $G=(V,E)$ be an undirected graph with path metric $d_G$,
and define graph balls and graph spheres, respectively, by
\begin{align*}
   B_G(x,R) &\seteq \left\{ y \in V : d_G(x,y) \leq R \right\} \\
   S_G(x,R) &\seteq \left\{ y \in V : d_G(x,y) = R \right\}.
\end{align*}

Suppose that for some $C > 1$, we we want to separate $S_G(x,R)$ from $S_G(x,C R)$ by
removing a small set of nodes $U \subseteq B_G(x,C R) \setminus B_G(x,R)$.  We refer to $U$ as an {\em annular separator.}
See \pref{fig:annular-sep}.

\begin{figure}[h]
      \begin{center}
         \subfigure[Separating $S_G(x,R)$ from $S_G(x,2R)$\label{fig:annular_sep}]{\includegraphics[width=7cm]{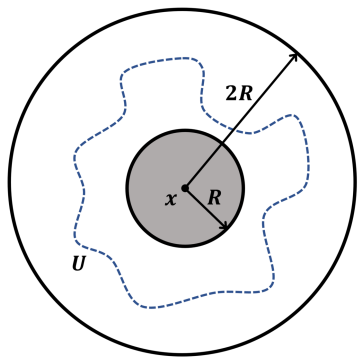}}\hspace{0.6in}
         \subfigure[A random triangulation\protect\footnotemark\ of $\bbS^2$\label{fig:uipt}]{\includegraphics[width=7cm]{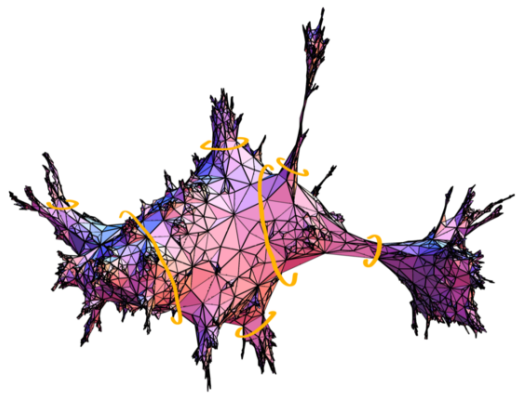}} 
         \caption{Annular separators\label{fig:annular-sep}}
   \end{center}
\end{figure}

\footnotetext{Depiction of a random triangulation is due to Nicolas Curien.}

It is easy to see that even if $G$ is a planar graph, small annular separators don't necessary exist.
But in many cases, one can find annular separators of size $O(R)$.
For instance, this is true for most vertices at most scales
in a uniformly random triangulation of the $2$-dimensional sphere \cite{Krikun05} (a fact
which extends experimentally to a variety of other models of random planar maps, e.g., those studied in \cite{GHS20}).
These models also have the properties that the cardinality of graph balls
tends to grow asymptotically like $|B_G(x,R)| \sim R^{k}$ (as $R \to \infty$, up to lower-order fluctuations),
where the exponent $k$ depends on the model.  For random triangulations, one has $k=4$ \cite{Angel03}.

Benjamini and Papasoglou \cite{BP11} give an explanation of this phenomenon as follows.
Suppose that $G$ is an infinite planar graph and we assume, additionally, that {\em $G$ has uniform polynomial growth of degree $k$:}
There exist numbers $C,k \geq 1$ such that
\[
   C^{-1} R^k \leq |B_G(x,R)| \leq C R^k,\quad \forall x \in V, R \geq 1.
\]
Then for every $x \in V(G)$ and $R \geq 1$, there is a set $U \subseteq B_G(x,2R) \setminus B_G(x,R)$
whose removal disconnects $S_G(x,R)$ from $S_G(x,2R)$ in $G$, and such that $|U| \leq O(R)$.
This applies equally well to finite graphs:  Indeed, the authors actually show that if
the graph metric restricted to $B_G(x,2R)$ has doubling constant $\lambda$, then one can find
an annular separator of size at most $C_{\lambda} R$.

We remark that there are rich families of planar graphs with uniform polynomial growth arising
in a variety of contexts; see \cite{bs01,bk02,EL20}.  Indeed,
one can obtain planar graphs with uniform polynomial growth of
degree $k$ for all real degrees $k > 1$.
Moreover, many models of random planar graphs
have an almost sure asymptotic version of this property \cite{DG20,GHS20}.

The authors of \cite{BP11} asked whether an analog of this phenomenon holds in higher dimensions.
For instance, if $G$ is a graph with uniform polynomial growth that can be geometrically represented
in $\R^3$, does it hold that $G$ has annular separators of size $O(R^2)$?
We give examples showing that for $d \geq 3$, this phenomenon fails in a strong way.

Say that a graph $G$ is {\em sphere-packed in $\R^d$} if $G$ is the tangency
graph of a collection of interior-disjoint spheres in $\R^d$.
Say that $G$ is {\em $M$-uniformly sphere-packed in $\R^d$}
if the collection of spheres can be taken such that the
radii of any two tangent spheres lies in the interval $[M^{-1},M]$, and that
$G$ is {\em uniformly sphere-packed in $\R^d$} if this holds for some $M < \infty$.

\newcommand{\isodim}{\underline{\mathsf{s}}}

\begin{theorem}[Arbitrarily large annular separators]
\label{thm:main1}
For every $d \geq 3$ and $s \geq 1$, there is a number $c > 0$ and a graph $G$ satisfying:
   \begin{enumerate}
      \item $G$ has uniform polynomial growth of degree $O(s)$.
      \item $G$ is uniformly sphere-packed in $\R^d$.
      \item For every $x \in V(G)$, there are at least $c R^s$ vertex-disjoint paths from
            $S_G(x,R)$ to $S_G(x,R')$ for any $R' > R \geq 1$.
   \end{enumerate}
\end{theorem}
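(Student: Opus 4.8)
The plan is to reduce \pref{thm:main1} to the following: for every $d \ge 3$ and every integer $m \ge 1$ there is an infinite graph $G = G_{d,m}$ and a constant $c>0$ such that $G$ is uniformly sphere-packed in $\R^d$, $G$ has uniform polynomial growth of degree $\Theta(m)$, and for every vertex $x$ and every $R'>R\ge 1$ there are at least $cR^{m-1}$ vertex-disjoint paths from $S_G(x,R)$ to $S_G(x,R')$. Taking $m=\lceil s\rceil+1$ then gives the theorem: the growth degree is $O(s)$ while $R^{m-1}\ge R^{s}$, and Menger's theorem turns the disjoint-paths statement into the ``every annular separator has size at least $cR^{s}$'' reformulation. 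The basic tension to overcome is that a sphere packing in $\R^d$ whose radii are \emph{all} comparable has growth degree at most $d$ (a graph ball of radius $R$ sits inside a Euclidean ball of radius $O(R)$, which holds $O(R^d)$ spheres); quantitatively, only a bounded-size box of $\Z^m$ with $m>d$ can be realized as a sphere packing with bounded radius ratio. So the radii in $G$ must range over infinitely many scales, with spheres accumulating densely (in the graph metric) near every point, and yet $G$ must have polynomial — not exponential — growth, so its coarse structure cannot be a tree. This forces a self-similar construction in which the ``extra'' $m-d$ dimensions of growth are produced by the hierarchy of scales in a controlled, non-branching way.

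Concretely, one first fixes a bounded-size \emph{gadget}: a constant-size box of the $m$-dimensional grid, realized explicitly as a finite sphere packing in $\R^d$ with all radii within a fixed factor $M_0$ and equipped with a bounded number of boundary ``ports''. Here $d \ge 3$ is essential, as a bounded grid box can be drawn with its (when $m>d$, unavoidably many) edge crossings resolved in three dimensions, whereas in $\R^2$ planarity obstructs exactly this — which is why the Benjamini--Papasoglou phenomenon holds in the plane. One then builds $G$ by an infinite recursion over scales $j\in\Z$: each scale-$j$ gadget is subdivided into a $\rho$-times-smaller array of scale-$(j+1)$ gadgets glued along their ports, and — crucially — gadgets are also glued to the scale-$(j+1)$ gadgets descending from their \emph{neighboring} parents, so that the gadgets at any fixed scale form a grid-like (hence polynomially growing) rather than tree-like array. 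Geometrically each refined array is placed in the gaps of its parent, with a short ``ramp'' of geometrically decreasing spheres bridging the factor-$\rho$ change of scale so that tangent spheres always have comparable radii; running the recursion both downward (refining) and upward (coarsening) over all $j\in\Z$ yields an infinite, (quasi-)homogeneous graph that is sphere-packed in $\R^d$, giving (2).

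Property (1) is then a counting argument on the self-similar structure: $B_G(x,R)$ meets only a bounded number of consecutive scales, and within each it looks like a bounded region of an $m$-dimensional grid at the relevant gadget scale; summing contributions yields $C^{-1}R^{k}\le |B_G(x,R)|\le CR^{k}$ for some $k=\Theta(m)$, the implied constant in the exponent measuring how much graph-distance is spent traversing a single gadget relative to crossing the coarse array. Property (3) again uses self-similarity: at the appropriate scale the annulus $B_G(x,R')\setminus B_G(x,R)$ contains a sub-array combinatorially modeling an $m$-dimensional grid box of side comparable to $R$, and such a box carries on the order of $R^{m-1}$ vertex-disjoint monotone paths from one face to the opposite face; Menger's theorem then bounds every separator between $S_G(x,R)$ and $S_G(x,R')$ from below by $cR^{m-1}\ge cR^{s}$.

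The step I expect to be the main obstacle is the geometric realization underlying the second paragraph: one must choose the base gadget, the port locations, the contraction factor $\rho$, and the ramps so that \emph{simultaneously} all spheres stay interior-disjoint at every scale, all tangent spheres keep a single uniformly bounded radius ratio, and enough grid structure of each scale survives the recursion for the growth estimate and the many-disjoint-paths property to go through. Note that a naive recursion that merely replaces edges by gadgets cannot boost the growth degree past $d$ (each gadget being itself only $d$-dimensional), so the recursion must be arranged with care; once the packing is pinned down, properties (1)--(3) are bookkeeping on the self-similar structure.
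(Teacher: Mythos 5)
Your approach is genuinely different from the paper's, and the gap you flag at the end is not a routine verification --- it is essentially the whole construction.

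The paper never embeds a $\Z^m$-grid in $\R^d$. Its basic object is a tiling $\bT^{(d)}_\gamma$ of $[0,1]^d$ by axis-parallel hyperrectangles organized into $b=|\gamma|$ horizontal slabs, slab $i$ being a $\gamma_i\times\cdots\times\gamma_i$ grid of flat boxes. Interleaving a coarse resolution $\gamma_1=\gamma_b=b$ with a much finer resolution $\gamma_i=tb$ in the middle slabs gives a \emph{natively $d$-dimensional} combinatorial structure whose tangency graph has growth exponent $\gr{d}(\gamma)=\log|\gamma|^{(d)}/\log|\gamma|$, which can be driven to any $k\ge d$. Self-similarity is supplied by the tile product $\bS\tileprod\bT$ (replace each tile of $\bS$ by a scaled copy of $\bT$), so the iterates $\bT^{(d)}_{\gamma^{\otimes n}}$ form the graph family. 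The disjoint paths across annuli are completely explicit: project the tiling to the last $d-1$ coordinates and take the fibers $\Pi_{d-1}^{-1}(B;\bT^{(d)}_{\gamma^{\otimes n}})$; these are pairwise-disjoint vertical columns spanning the full height of the cube, and their count inside a radius-$R$ ball is $\asymp R^{\gr{d-1}(\gamma)}$, an exponent one can push above $s$ for $d\ge3$ while keeping $\gr{d}(\gamma)=O(s)$. (For $d=2$, $\gr{1}(\gamma)\equiv1$ always, which is how the paper accounts for the Benjamini--Papasoglou bound --- a different mechanism than the ``planarity obstructs resolving crossings'' picture you posit.) Finally the sphere packing comes for free: rescaling the tiles gives a bounded-aspect-ratio cube packing, and a short generic lemma (\pref{lem:neat-cube-sphere-packing}) shows that the $m$-subdivision of any neat cube packing's tangency graph is a uniform sphere packing. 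So the paper never has to solve a geometric realization problem: the geometry is rectangles in $\R^d$ from the first line, and the super-$d$-dimensional growth is a pure bookkeeping phenomenon of slab resolutions.

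That contrast is why the hole in your plan is serious rather than ``bookkeeping on the self-similar structure.'' You need (a) a bounded box of $\Z^m$ with $m>d$ realized as a sphere packing in $\R^d$ with bounded radius ratio and with prescribed port spheres, and (b) an infinite recursion placing a scaled array of such gadgets into the interstices of its parent at every scale $j\in\Z$, all while keeping the packing interior-disjoint and every tangent radius ratio in a single fixed interval. No mechanism is given for (b): the interstices of a sphere packing are not amenable regions, the sub-array must hit the parent's ports at exactly prescribed positions, and the same must hold simultaneously at infinitely many scales, which would require a genuine inductive geometric argument. One would also need the ``ramps'' to make each change of scale cost $\Omega(1)$ in graph distance (otherwise balls are infinite), and both of your quantitative claims (uniform growth of degree $\Theta(m)$; annuli containing a $\Z^m$ grid sub-box of side $\asymp R$) depend entirely on the unspecified recursion. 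Your high-level intuitions are sound --- the radii must range over unboundedly many scales, $d\ge3$ is where the phenomenon starts, Menger converts paths to separators --- but as written this is a plausible research plan rather than a proof, and it is not clear the plan can be carried out. The paper's tiling formalism is precisely a way to avoid having to try.
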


Clearly if $G$ has uniform polynomial growth of degree $d$, then one of the 
$R$-many spheres $S(x,R+1),S(x,R+2) \ldots, S(x,2R)$ must be an annular separator of size $O(R^{d-1})$.
We show that the moment the growth degree exceeds $d$,
there are graphs sphere-packed in $\R^d$ that don't have $(d-1)$-dimensional annular separators.

\begin{theorem}[Nearly-dimensional growth rate]
   \label{thm:main2}
   For every $d \geq 3$ and $\e > 0$, there are numbers $c,\delta > 0$ and a graph $G$ satisfying:
   \begin{enumerate}
      \item $G$ has uniform polynomial growth of degree at most $d+\e$.
      \item $G$ is uniformly sphere-packed in $\R^d$.
      \item For every $x \in V(G)$, there are at least $c R^{(d-1)+\delta}$ vertex-disjoint paths from $S_G(x,R)$ to $S_G(x,R')$ for any $R' > R \geq 1$.
   \end{enumerate}
\end{theorem}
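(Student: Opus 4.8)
The plan is to take $G$ to be (a bounded subdivision of) the Cartesian product $H \times \Z^{d-2}$, where $H$ is a bounded‑degree \emph{planar} graph with uniform polynomial growth of a precisely tuned non‑integer degree. Fix $\delta \in (0,\e]$ and let $H$ be an infinite planar graph with uniform polynomial growth of degree $k := 2+\delta$ that is the tangency graph of an interior‑disjoint family of squares in $\R^2$ whose adjacent side lengths lie within a bounded ratio; the hierarchical constructions of planar graphs of arbitrary real growth degree (cf.\ \cite{bs01,bk02,EL20}) produce such an $H$. For the Cartesian product one has $d_G\big((v,j),(w,l)\big) = d_H(v,w) + \|j-l\|_1$. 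I then verify the three conclusions of the theorem for $G$; the ambient dimension $d$ enters essentially through the splitting $\R^d = \R^2 \times \R^{d-2}$.

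Conclusion (1) is a convolution estimate: $|B_G((v,j),R)| = \sum_{l \in \Z^{d-2},\, \|l-j\|_1 \le R} |B_H(v, R-\|l-j\|_1)| \asymp \sum_{m=0}^{R} m^{d-3}(R-m)^{k} \asymp R^{k+d-2}$, uniformly in $(v,j)$, so $G$ has uniform polynomial growth of degree $k+d-2 = d+\delta \le d+\e$. Conclusion (2) is a geometric realization: starting from the square packing of $H$ in $\R^2 = \R^2 \times \{0\} \subseteq \R^d$, replace each square by a $(d-2)$‑dimensional axis‑parallel grid of balls (a ``column'') and join the columns over adjacent squares of $H$ by short chains of balls whose radii interpolate between the two scales. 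The columns are interior‑disjoint because, wherever two $H$‑squares differ in scale, the corresponding columns are offset in the $\R^{d-2}$ coordinates; all radius ratios stay bounded; and no spurious tangencies are created. The resulting packing realizes a graph differing from $H \times \Z^{d-2}$ only by a bounded subdivision, which affects neither (1) nor (3).

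Conclusion (3) is the point, and it requires nothing of $H$ beyond the lower bound $|B_H(y,r)| \ge C^{-1} r^k$ from its uniform growth. Fix $x = (x_0,j_0)$ and $R' > R \ge 2$ (the case $R=1$ being trivial). For each $u \in B_H(x_0, R/2)$ set $\rho_u := R - d_H(x_0,u) \ge R/2$, and inside the copy $\{u\}\times\Z^{d-2}$ choose a maximum family of vertex‑disjoint paths from $S_{\Z^{d-2}}(j_0,\rho_u)$ to $S_{\Z^{d-2}}(j_0,\rho_u + (R'-R))$; since $\Z^{d-2}$ has at least $c_1 \rho^{d-3}$ such disjoint ``radial'' paths at radius $\rho$, this family has size at least $c_1 (R/2)^{d-3}$. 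Read in $G$, each such path starts on $S_G(x,R)$ and ends on $S_G(x,R')$, because $d_G(x,(u,l)) = d_H(x_0,u) + \|l - j_0\|_1$. Paths coming from different $u$ are vertex‑disjoint (distinct $H$‑coordinate) and those from the same $u$ are disjoint by construction, so in total there are at least
\[
   \sum_{u \in B_H(x_0, R/2)} c_1 (R/2)^{d-3} \;=\; c_1 (R/2)^{d-3}\, |B_H(x_0, R/2)| \;\ge\; c_1 (R/2)^{d-3}\, C^{-1}(R/2)^{k} \;=\; c\, R^{(d-1)+\delta}
\]
vertex‑disjoint paths from $S_G(x,R)$ to $S_G(x,R')$. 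The argument genuinely uses $d \ge 3$: for $d = 2$ the graph $G = H$ is planar, and indeed Benjamini--Papasoglou already guarantees annular separators of size $O(R)$ in $H$ itself, consistent with the displayed bound becoming vacuous there.

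\textbf{Main obstacle.} The weight of the proof is not in conclusion (3) — which, although it is the reason the theorem holds, falls out of the product structure once the pieces are in place — but in the two infrastructural steps: (a) producing a planar graph $H$ that simultaneously has uniform polynomial growth of the prescribed non‑integer degree $k$ \emph{and} a bounded‑ratio circle/square packing (the content of the hierarchical‑fractal constructions of \cite{bs01,bk02,EL20}, imported as a black box), and (b) the careful geometric bookkeeping behind conclusion (2), realizing the $(d-2)$‑fold ``thickening'' of $H$'s packing as a genuine interior‑disjoint ball packing in $\R^d$ with bounded radius ratios and no spurious tangencies. Finally, taking $k$ large rather than $k = 2+\delta$ in the same scheme yields \pref{thm:main1}.
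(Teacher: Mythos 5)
Your approach via the Cartesian product $G = H \times \Z^{d-2}$ is genuinely different from the paper's: the paper builds an explicit tiling $\bT^{(d)}_{\gamma^{\otimes n}}$ of $[0,1]^d$ by axis-parallel hyperrectangles (a $d$-dimensional analogue of the planar tilings in \cite{EL20}), shows it is equivalent to a cube packing with integer aspect ratios between neighbors, converts that neat cube packing to a sphere packing via bounded subdivision, and extracts the disjoint paths across annuli from the fibers $\Pi_{d-1}^{-1}(B)$ of the projection onto the last $d-1$ coordinates. Your convolution estimate for conclusion (1) and the fiber-by-fiber path count for conclusion (3) are both correct for the product graph; in particular the observation that (3) needs only the lower bound $|B_H(y,r)| \gtrsim r^k$ together with the $\ell^1$ product distance is a clean way to see the mechanism.

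The gap is in conclusion (2), and it is not a matter of bookkeeping. The ``column'' construction does not realize $H \times \Z^{d-2}$, nor any \emph{bounded} subdivision of it, because of a coordination problem between columns at different scales. Suppose adjacent squares $A,B$ in $H$'s packing have sidelengths $s_A = 2 s_B$. The balls of column $A$ must have radius $\asymp s_A$ (to reach a neighbor across a gap of width $\asymp s_A$), hence spacing $\asymp s_A$ in the $\R^{d-2}$ directions, while those of column $B$ have spacing $\asymp s_B = s_A/2$. The balls $(A,l)$ and $(B,l)$ sit at heights $\asymp s_A l$ and $\asymp s_B l$ respectively, a vertical offset $\asymp |s_A - s_B|\,|l|$ that grows without bound in $|l|$. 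Any chain of bounded-ratio spheres joining $(A,l)$ to $(B,l)$ therefore has $\Omega(|l|)$ members, so the tangency graph is not $[H\times\Z^{d-2}]_m$ for any fixed $m$ (and then (1) and (3) for the packed graph no longer follow from your analysis of the product). Worse, for $|l|$ large these chains sweep out overlapping height intervals in the same slab between the two columns and cannot all be kept interior-disjoint. The ``offset'' you mention does not cure a scale mismatch that grows with $|l|$. The paper avoids this precisely by \emph{not} building a product: in its cube packing, a single cube of layer $i$ is tangent to a full $(\gamma_{i+1}/\gamma_i)^{d-1}$-block of finer cubes in layer $i+1$, which is the tangency pattern a cube packing with integer scale ratios naturally produces and is what \pref{lem:neat-cube-sphere-packing} needs. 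It is unclear whether $H\times\Z^{d-2}$, for $H$ planar of superquadratic growth, is uniformly sphere-packable in $\R^d$ at all; establishing (2) along your lines would require a genuinely new idea, and that is where the real content of the theorem lies.
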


Note that the two preceding theorems refer to infinite graphs.  A version for families of 
finite graphs appears in \pref{thm:main-finite}.

\medskip
\noindent
{\bf Preliminaries.}
We will consider primarily connected, undirected graphs $G=(V,E)$,
which we equip with the associated path metric $d_G$.
We write $V(G)$ and $E(G)$, respectively, for the vertex and edge
sets of $G$.
If $U \subseteq V(G)$, we write $G[U]$ for the subgraph induced on $U$.

For $v \in V$, let $\deg_G(v)$ denote the degree of $v$ in $G$.
Let $\diam(G) \seteq \sup_{x,y \in V} d_G(x,y)$ denote the diameter of $G$
(which is only finite for $G$ finite and connected), and
for a subset $S \subseteq V$, denote $\diam_G(S) \seteq \sup_{x,y \in S} d_G(x,y)$.
For $v \in V$ and $r \geq 0$, we use $B_G(v,r) = \{ u \in V : d_G(u,v) \leq r\}$
to denote the closed ball in $G$.
For subsets $S,T \subseteq V$, we write $d_G(S,T) \seteq \inf \{ d_G(s,t) : s \in S, t \in T\}$.

For two expressions $A$ and $B$, we use the notation $A \lesssim B$ to denote
that $A \leq C B$ for some {\em universal} constant $C$.  The notation
$A \lesssim_{\alpha,\beta,\cdots} B$ denotes that $A \leq C(\alpha,\beta,\cdots) B$ where $C(\alpha,\beta,\cdots)$
denotes a number depending only on the parameters $\alpha,\beta,$ etc.
We write $A \asymp B$ for the conjunction $A \lesssim B \wedge B \lesssim A$.

\section{Tilings of the unit cube}

Fix the dimension $d \geq 2$.
Our constructions are based on tilings of subsets of $\R^d$ by axis-parallel hyperrectangles,
a generalization of the planar constructions in \cite{EL20}.
A $d$-dimensional {\em tile} is an axis-parallel closed hyperrectangle $A \subseteq \R^d$,
i.e., a set of the form $A = [a_1,b_1] \times [a_2,b_2] \times \cdots \times [a_d, b_d]$ for
numbers satisfying $a_i < b_i$ for each $i = 1,2,\ldots,d$.

We will encode such a tile as a $(d+1)$-tuple $(p(A), \ell_1(A), \ell_2(A), \ldots, \ell_d(A))$, where
$p(A) \seteq (a_1,a_2,\ldots,a_d)$ and $\ell_i(A) \seteq b_i-a_i$ is the length of the projection
of $A$ along the $i$th axis.

A {\em tiling $\bT$} is a finite collection of interior-disjoint tiles.
Denote $\region{\bT} \seteq \bigcup_{A \in \bT} A$.  If $R \subseteq \R^d$, we say that
{\em $\bT$ is a tiling of $R$} if $\region{\bT} = R$.
We associate to a tiling its {\em tangency graph} $G(\bT)$ with vertex set $\bT$
and with an edge between two tiles $A,B \in \bT$ whenever $A \cap B$ has
non-zero $(d-1)$-dimensional volume.

Denote by $\cT_d$ the set of all tilings of the unit $d$-dimensional cube $[0,1]^d$.
See \pref{fig:tiling} for a tiling of $[0,1]^3$ and \pref{fig:dual_graph} for a representation
of its tangency graph.
For the remainder of the paper, we will consider only tilings $\bT$ for which $G(\bT)$ is connected.

\begin{definition}[Tiling product]
For $\bS, \bT \in \cT_d$, define the product
$\bS\tileprod \bT \in \cT_d$ as the tiling formed by replacing every tile in
$\bS$ by an (appropriately scaled) copy of $\bT$.
More precisely:  For every $A \in \bS$ and $B \in \bT$, there is a tile $R \in \bS\tileprod\bT$
with $\ell_i(R) \seteq \ell_i(A) \ell_i(B)$, and
\[
   p_i(R) \seteq p_i(A)+p_i(B) \ell_i(A),
\]
for each $i=1,2,\ldots,d$.
\end{definition}

If $\bT \in \cT_d$ and $n \geq 0$, we use $\bT^n \seteq \bT \tileprod \cdots \tileprod \bT$ to denote
the $n$-fold tile product of $\bT$ with itself, where $\bT^0 \seteq \bI_d$ and $\bI_d \seteq \{[0,1]^d\} \in \cT_d$
is the identity tiling.
The following observation
shows that this is well-defined.

\begin{figure}[h]
      \begin{center}
         \subfigure[$\bT^{(3)}_{\langle 3,6,3\rangle}$\label{fig:tiling}]{ \includegraphics[width=6cm]{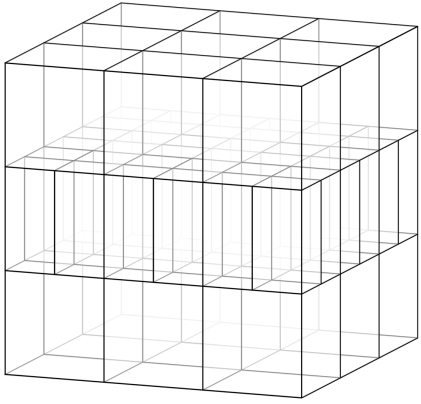}} \hspace{0.3in}
         \subfigure[The tangency graph\label{fig:dual_graph}]{  \includegraphics[width=6.8cm]{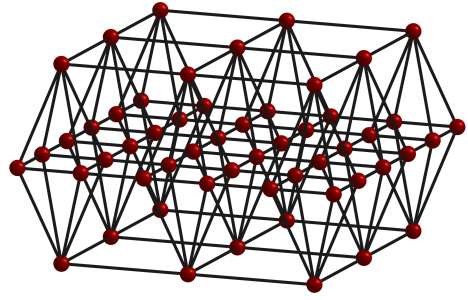}}
         \caption{A tiling and its tangency graph}
   \end{center}
\end{figure}

\begin{observation}
   The tiling product is associative:  $(\bS \tileprod \bT) \tileprod \bU = \bS \tileprod (\bT \tileprod \bU)$
   for all $\bS,\bT,\bU \in \cT_d$.
   Moreover, if $\bm{I_d} \in \cT_d$ consists of the single tile $[0,1]^d$, then $\bT \tileprod \bm{I_d} = \bm{I_d} \tileprod \bT$
   for all $\bT \in \cT_d$.
\end{observation}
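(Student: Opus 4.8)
The plan is to reduce both claims to the trivial fact that composition of orientation-preserving affine self-maps of an interval is associative, with the identity map as a two-sided unit. The key observation is that the tiling product is \emph{separable}: the formulas $\ell_i(R) = \ell_i(A)\ell_i(B)$ and $p_i(R) = p_i(A) + p_i(B)\ell_i(A)$ couple only the $i$-th coordinate data of $A$ and $B$. So to each tile $A$ and axis $i$, I would associate the orientation-preserving affine map $\phi_i^A \colon t \mapsto p_i(A) + \ell_i(A)\,t$, which carries $[0,1]$ onto the projection of $A$ to the $i$-th coordinate axis, and note that the product map $\Phi_A \seteq \phi_1^A \times \cdots \times \phi_d^A$ is an affine bijection of $[0,1]^d$ onto $A$. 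A one-line computation, $\phi_i^A\bigl(\phi_i^B(t)\bigr) = p_i(A) + \ell_i(A)\bigl(p_i(B) + \ell_i(B) t\bigr) = \bigl(p_i(A)+\ell_i(A)p_i(B)\bigr) + \ell_i(A)\ell_i(B)\,t$, then shows that the tile $R \in \bS \tileprod \bT$ attached to the pair $(A,B) \in \bS \times \bT$ satisfies $\phi_i^R = \phi_i^A \circ \phi_i^B$ for every $i$; equivalently, $R = \Phi_A(B)$. In short, the tiling product is computed coordinate-wise by composing affine maps.

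Before exploiting this I would dispatch the ``well-defined'' clause, i.e.\ that $\bS \tileprod \bT$ is actually a tiling of $[0,1]^d$. The tiles are interior-disjoint: if $(A,B) \neq (A',B')$ then either $A$ and $A'$ are interior-disjoint tiles of $[0,1]^d$, so their images $\Phi_A(B) \subseteq A$ and $\Phi_{A'}(B') \subseteq A'$ have disjoint interiors, or $A = A'$ and $B, B'$ are interior-disjoint, so $\Phi_A(B)$ and $\Phi_A(B')$ have disjoint interiors since $\Phi_A$ is an affine bijection onto $A$. And they cover the cube: $\{\Phi_A(B) : B \in \bT\}$ is the image of the tiling $\bT$ under the bijection $\Phi_A$, hence tiles $A$, and then $\bigcup_{A \in \bS}\bigcup_{B \in \bT}\Phi_A(B) = \bigcup_{A \in \bS} A = [0,1]^d$. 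In particular the assignment $(A,B) \mapsto \Phi_A(B)$ is a bijection from $\bS \times \bT$ onto the tiles of $\bS \tileprod \bT$, so it is legitimate to speak of ``the tile indexed by $(A,B)$.''

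Given the coordinate-wise composition formula, both assertions are immediate. For associativity: the tiles of $(\bS \tileprod \bT)\tileprod \bU$ and of $\bS \tileprod (\bT \tileprod \bU)$ are each indexed by triples $(A,B,C) \in \bS \times \bT \times \bU$ (iterating the bijection above), and in either grouping the $i$-th affine map of the resulting tile is $\phi_i^A \circ \phi_i^B \circ \phi_i^C$ --- here, and only here, using that composition of functions is associative. Since a tile is determined by its tuple of coordinate maps, the two tilings coincide. For the identity: the unique tile of $\bI_d$ has $p_i = 0$ and $\ell_i = 1$, so $\phi_i^{[0,1]^d} = \id$; composing with $\id$ on either side leaves each $\phi_i^A$ unchanged, whence $\bT \tileprod \bI_d = \bT = \bI_d \tileprod \bT$, which in particular gives the stated equality $\bT \tileprod \bI_d = \bI_d \tileprod \bT$.

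I do not anticipate a real obstacle: the entire content is bookkeeping, and the only step requiring any care is the well-definedness/bijection step, which must be established before one manipulates ``the tile corresponding to $(A,B)$'' --- but it follows directly from the interior-disjointness and covering properties of $\bS$ and $\bT$ themselves.
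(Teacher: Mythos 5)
Your proof is correct. The paper states this as an observation with no proof supplied, so there is nothing to compare against; your reformulation of the tiling product as coordinatewise composition of the affine maps $\phi_i^A\colon t\mapsto p_i(A)+\ell_i(A)t$ is exactly the right way to make the associativity and identity claims trivial, and you correctly flag and handle the one nontrivial preliminary point — that $(A,B)\mapsto \Phi_A(B)$ is a well-defined bijection onto the tiles of $\bS\tileprod\bT$, which justifies ``the tile indexed by $(A,B)$'' and hence the reindexing by triples that underlies the associativity argument.
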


\begin{figure}[h]
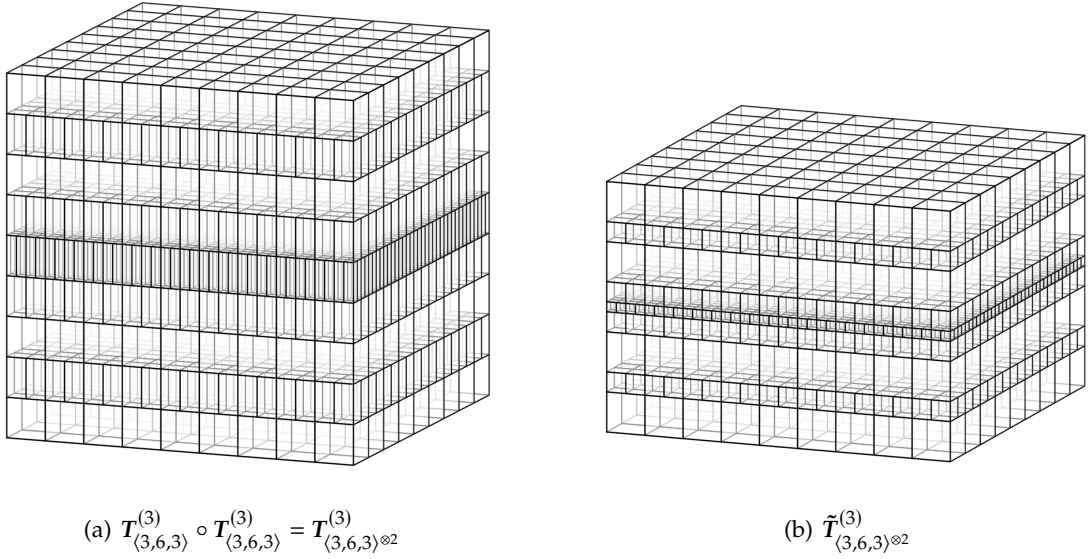

      \begin{center}
         \subfigure[$\bT^{(3)}_{\langle 3,6,3\rangle} \circ \bT^{(3)}_{\langle 3,6,3\rangle} = \bT^{(3)}_{\langle 3,6,3\rangle^{\otimes 2}}$\label{fig:gamma_squared}]{ \includegraphics[width=7cm]{\figdir/tall_cubes_gsquared.png}} \hspace{0.3in}
         \subfigure[$\tilde{\bT}^{(3)}_{\langle 3,6,3\rangle^{\otimes 2}}$\label{fig:cube-packing}]{  \includegraphics[width=7cm]{\figdir/cubes_gsquared.png}}
         \caption{A tile product and its cube packing}
   \end{center}
\end{figure}

\subsection{The construction}

Given a sequence $\gamma = \llangle \gamma_1,\ldots,\gamma_b\rrangle$ with $\gamma_i \in \N$, we define
an associated tiling $\bT^{(d)}_{\gamma} \in \cT_d$ as follows: 
For $1 \leq i \leq b$, fill $[0,1]^{d-1} \times [(i-1)/b,i/b]$
with $\gamma_i^{d-1}$ copies of $[0,1/\gamma_i]^{d-1} \times [0,1/b]$   formed into a $\underbrace{\gamma_i \times \cdots \times \gamma_i}_{d-1 \text{ times}}$
grid.  For example, see $\bT^{(d)}_{\langle 3,6,3\rangle}$ in \pref{fig:tiling}.
The following observation will be useful.

\begin{observation}
\label{obs:tile-prod-sequence}
	For $\gamma = \llangle \gamma_1,\ldots,\gamma_a\rrangle$, $\gamma' = \llangle \gamma'_1,\ldots,\gamma'_{b}\rrangle$, we have 
	$\bT^{(d)}_{\gamma} \tileprod \bT^{(d)}_{\gamma'} = \bT^{(d)}_{\gamma \tensor \gamma'}$
	 where 
	 \[
	 \gamma \tensor \gamma' = \llangle \gamma_1 \gamma'_1, \ldots, \gamma_1 \gamma'_{b}, \ldots, \gamma_a \gamma'_1, \ldots, \gamma_a\gamma'_{b} \rrangle.
	 \]
	In particular, for $n \geq 1$ it holds that $\left(\bT^{(d)}_{\gamma}\right)^n = \bT^{(d)}_{\gamma^{\tensor n}}$, 
   where $\gamma^{\tensor n} \seteq \gamma \otimes \gamma \otimes \cdots \otimes \gamma$ is the $n$-fold tensor product
   of $\gamma$ with itself.  See \pref{fig:gamma_squared} for a representation of
   $\bT^{(3)}_{\langle 3,6,3\rangle^{\otimes 2}}$.
   We will use these two representations interchangeably throughout the paper.
\end{observation}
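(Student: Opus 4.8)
The plan is to prove the identity $\bT^{(d)}_{\gamma} \tileprod \bT^{(d)}_{\gamma'} = \bT^{(d)}_{\gamma \tensor \gamma'}$ by writing both tilings out as explicit lists of $(d+1)$-tuples and exhibiting the bijection that matches them tile-for-tile; the ``in particular'' clause then follows by a short induction. To begin, I would record a normal form for the tiles of $\bT^{(d)}_{\gamma}$ when $\gamma = \llangle \gamma_1,\ldots,\gamma_a\rrangle$. Directly from the construction, every tile arises from a choice of slab index $i \in \{1,\ldots,a\}$ together with a grid position $(j_1,\ldots,j_{d-1})$ with each $j_k \in \{0,\ldots,\gamma_i-1\}$, and the resulting tile $A = A(i;j_1,\ldots,j_{d-1})$ satisfies $\ell_k(A) = 1/\gamma_i$ and $p_k(A) = j_k/\gamma_i$ for $1 \le k \le d-1$, and $\ell_d(A) = 1/a$ and $p_d(A) = (i-1)/a$. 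The assignment $(i;j_1,\ldots,j_{d-1}) \mapsto A(i;j_1,\ldots,j_{d-1})$ is a bijection from this index set onto $\bT^{(d)}_{\gamma}$.

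Next I would apply the definition of the tiling product to $\bS = \bT^{(d)}_{\gamma}$, with $\gamma$ of length $a$, and $\bT = \bT^{(d)}_{\gamma'}$, with $\gamma' = \llangle \gamma'_1,\ldots,\gamma'_b\rrangle$. For $A = A(i;j_1,\ldots,j_{d-1}) \in \bS$ and $B = A(i';j'_1,\ldots,j'_{d-1}) \in \bT$, the formulas $\ell_k(R) = \ell_k(A)\ell_k(B)$ and $p_k(R) = p_k(A) + p_k(B)\ell_k(A)$ give the product tile $R = R(A,B)$ with $\ell_k(R) = 1/(\gamma_i\gamma'_{i'})$ and $p_k(R) = j_k/\gamma_i + (j'_k/\gamma'_{i'})(1/\gamma_i) = (\gamma'_{i'} j_k + j'_k)/(\gamma_i\gamma'_{i'})$ for $1 \le k \le d-1$, together with $\ell_d(R) = 1/(ab)$ and $p_d(R) = (i-1)/a + ((i'-1)/b)(1/a) = ((i-1)b + i' - 1)/(ab)$.

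Now I would introduce $m \seteq (i-1)b + i'$ and $l_k \seteq \gamma'_{i'} j_k + j'_k$ and verify two bijection facts: the map $(i,i') \mapsto m$ is a bijection $\{1,\ldots,a\} \times \{1,\ldots,b\} \to \{1,\ldots,ab\}$ under which the $m$-th entry of $\gamma \tensor \gamma'$ is exactly $\gamma_i \gamma'_{i'}$; and, for each fixed pair $(i,i')$, the map $(j_k,j'_k) \mapsto l_k$ is a bijection $\{0,\ldots,\gamma_i-1\} \times \{0,\ldots,\gamma'_{i'}-1\} \to \{0,\ldots,\gamma_i\gamma'_{i'}-1\}$, this being nothing but division with remainder by $\gamma'_{i'}$. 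Plugging $(m;l_1,\ldots,l_{d-1})$ into the normal form of the first step (applied now to the sequence $\gamma \tensor \gamma'$, which has length $ab$) produces a tile of $\bT^{(d)}_{\gamma \tensor \gamma'}$ all of whose coordinates agree with those of $R(A,B)$ computed above. Since $(A,B) \mapsto R(A,B)$ enumerates $\bS \tileprod \bT$ while $(m;l_1,\ldots,l_{d-1})$ enumerates the tiles of $\bT^{(d)}_{\gamma \tensor \gamma'}$, the two tilings coincide.

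Finally, for the ``in particular'' clause I would observe that $\tensor$ on sequences is associative --- both $(\gamma \tensor \gamma') \tensor \gamma''$ and $\gamma \tensor (\gamma' \tensor \gamma'')$ carry the entry $\gamma_i \gamma'_j \gamma''_k$ in position $(i-1)bc + (j-1)c + k$ --- so $\gamma^{\tensor n}$ is well defined and satisfies $\gamma^{\tensor n} = \gamma \tensor \gamma^{\tensor(n-1)}$. Combining the associativity of $\tileprod$ recorded in the preceding observation with the identity just proved yields, by induction on $n$, that $(\bT^{(d)}_{\gamma})^n = \bT^{(d)}_{\gamma} \tileprod (\bT^{(d)}_{\gamma})^{n-1} = \bT^{(d)}_{\gamma} \tileprod \bT^{(d)}_{\gamma^{\tensor(n-1)}} = \bT^{(d)}_{\gamma \tensor \gamma^{\tensor(n-1)}} = \bT^{(d)}_{\gamma^{\tensor n}}$. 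I do not expect a genuine obstacle: the whole argument is bookkeeping, and the single point that needs care is the factor $\ell_i(A)$ in $p_i(R) = p_i(A) + p_i(B)\ell_i(A)$, which is exactly what collapses the two nested grid coordinates $(j_k,j'_k)$ into the one mixed-radix coordinate $l_k = \gamma'_{i'} j_k + j'_k$ and makes the slab offsets telescope into $m-1 = (i-1)b + i' - 1$.
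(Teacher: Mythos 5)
The paper states this as an Observation without proof, so there is nothing to compare against directly; your argument is the natural direct verification and it is correct. You parameterize the tiles of $\bT^{(d)}_{\gamma}$ correctly (slab index along coordinate $d$ with height $1/a$, grid coordinates along the first $d-1$ axes with side $1/\gamma_i$), apply the product formulas $\ell_k(R) = \ell_k(A)\ell_k(B)$ and $p_k(R) = p_k(A) + p_k(B)\ell_k(A)$ faithfully, and the mixed-radix bijections $m = (i-1)b + i'$ and $l_k = \gamma'_{i'}j_k + j'_k$ are exactly what identify the product tiles with the tiles of $\bT^{(d)}_{\gamma \tensor \gamma'}$; the associativity of $\tensor$ and the induction for the $n$-fold power are likewise routine and handled correctly.
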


\newcommand{\gr}[1]{\mathsf{k}^{(#1)}}

We use the notations $|\gamma| \seteq b$ and
\[
   |\gamma|^{(d)} \seteq |\bT_{\gamma}^{(d)}| = \gamma_1^{d-1} + \cdots + \gamma_b^{d-1}.
\]
Note that $|\gamma \otimes \gamma'|^{(d)} = |\gamma|^{(d)} |\gamma'|^{(d)}$,
hence
\begin{equation}\label{eq:iter-size}
   \bT_{\gamma^{\otimes n}}^{(d)} = |\gamma^{(d)}|^n.
\end{equation}
Further, denote 
\[
   \gr{d}(\gamma) \seteq \frac{\log \left(|\gamma|^{(d)}\right)}{\log |\gamma|}.
\]

We now state three key lemmas proved that are proved in subsequent sections,
and then use them to prove our main theorems.
The first establishes uniform polynomial volume growth for the graphs $\bT^{(d)}_{\gamma^{\otimes n}}$.
It is proved in \pref{sec:vol-growth-analysis}.

\begin{lemma}[Volume growth]
\label{lem:T-gamma-vol-growth}
   Consider $\gamma = \llangle \gamma_1,\ldots,\gamma_b\rrangle$ with $\min(\gamma) = \gamma_1 = \gamma_b = b$.
   Then for all $d \geq 2$, there is a number $C=C(d,\gamma)$ such that
   the family of graphs $\cF = \left\{G\left(\bT^{(d)}_{\gamma^{\tensor n}}\right) : n \geq 0\right\}$ has uniform polynomial
   growth of degree $k = \gr{d}(\gamma)$ in the sense that
\[
   C^{-1} R^k \leq |B_G(x,R)| \leq C R^k ,\qquad \forall G \in \cF, x \in V(G), 1 \leq R \leq \diam(G)\,.
\]
\end{lemma}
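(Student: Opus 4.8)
The plan is to exploit the self-similar structure of $\bT^{(d)}_{\gamma^{\otimes n}}$. For $0\le \ell\le n$ call the tiles of $\bT^{(d)}_{\gamma^{\otimes \ell}}$ the \emph{level-$\ell$ cells}: they partition $[0,1]^d$, each has $x_d$-extent $b^{-\ell}$ and cubical horizontal cross-section of side $1/\Gamma$ for some integer $\Gamma\ge b^{\ell}$ (the cell's \emph{refinement}), and by \pref{obs:tile-prod-sequence} each level-$\ell$ cell contains a linearly rescaled copy of $\bT^{(d)}_{\gamma^{\otimes(n-\ell)}}$, so the subgraph of $G_n:=G(\bT^{(d)}_{\gamma^{\otimes n}})$ induced on it is isomorphic to $G_{n-\ell}$. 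By \pref{eq:iter-size}, $|V(G_m)|=N^m$ with $N:=|\gamma|^{(d)}=\sum_i\gamma_i^{d-1}$, and since $k=\log_b N$ we have $N^m=(b^m)^k$. Two elementary facts about the construction will be used. First, \emph{consecutive slabs of $\bT^{(d)}_{\gamma^{\otimes m}}$ have comparable refinement}: passing to the next slab in lexicographic order increments the least non-maximal coordinate and resets the following ones to $1$, and because $\gamma_1=\gamma_b$ the reset factors cancel, so the refinement changes by a factor in $[b/\max(\gamma),\ \max(\gamma)/b]$. Second, \emph{$\diam(G_m)\le c_2 b^m$} with $c_2=c_2(d)$: since $\gamma_1=\min(\gamma)=b$, the bottom slab of $\bT^{(d)}_{\gamma^{\otimes m}}$ is a $b^m\times\cdots\times b^m$ grid, and any two tiles are joined by descending to that slab (one step per slab crossed), traversing it, and ascending.

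\textbf{Lower bound.} Fix $x$ and $1\le R\le\diam(G)$; we may assume $R\ge c_2$. Let $\ell$ be the largest index in $\{0,\dots,n\}$ with $c_2 b^{n-\ell}\le R$, so $b^{n-\ell}\asymp R$. The level-$\ell$ cell $A$ containing $x$ induces a copy of $G_{n-\ell}$, hence has diameter $\le c_2 b^{n-\ell}\le R$ in $G_n$; therefore $A\subseteq B_{G_n}(x,R)$ and $|B_{G_n}(x,R)|\ge |A|=N^{n-\ell}=(b^{n-\ell})^k\gtrsim R^k$.

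\textbf{Upper bound.} With $\ell$ as above I will show $B_{G_n}(x,R)$ meets only $O_{d,\gamma}(1)$ level-$\ell$ cells, which suffices since each has $N^{n-\ell}=(b^{n-\ell})^k\lesssim R^k$ vertices. Index the $b^n$ slabs of $\bT^{(d)}_{\gamma^{\otimes n}}$ by height; every edge stays in a slab or joins consecutive slabs, so each vertex of $B_{G_n}(x,R)$ lies in a slab whose index is within $R$ of that of $x$. Let $\Gamma^\star$ be the minimum refinement among those slabs. Using the maximality of $\ell$ (so $R\asymp b^{n-\ell}$) together with the ``comparable consecutive slabs'' fact applied at level $\ell$, and using $\min(\gamma)=b$ (so the coarsest slab inside $A$ has refinement exactly $\Gamma(A)\cdot b^{n-\ell}$), one gets $\Gamma^\star\asymp_{d,\gamma}\Gamma(A)\,b^{n-\ell}$. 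Now trace a path of length $\le R$ from $x$: its last coordinate moves by at most $b^{-n}$ per step, hence by $\lesssim R\,b^{-n}\asymp b^{-\ell}$ in total; and in each of the first $d-1$ coordinates, since every slab visited has refinement $\ge\Gamma^\star$ and all tiles have cubical cross-section, a single edge moves the center by at most $1/\Gamma^\star$ (for a vertical edge, via the overlap of the two tangent cross-sections), hence by $\lesssim R/\Gamma^\star\asymp 1/\Gamma(A)$ in total. Thus $B_{G_n}(x,R)$ lies in a box of $x_d$-extent $\lesssim b^{-\ell}$ and horizontal side $\lesssim 1/\Gamma(A)$; this box meets level-$\ell$ cells in only $O(1)$ consecutive $x_d$-layers of $\bT^{(d)}_{\gamma^{\otimes\ell}}$, each of which has refinement $\asymp\Gamma(A)$ by the same fact and hence width $\asymp 1/\Gamma(A)$, so the box meets only $O_{d,\gamma}(1)$ cells.

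\textbf{Main obstacle.} The crux is the horizontal estimate in the upper bound: that a path of length $R$ cannot displace the horizontal center by more than $\asymp 1/\Gamma(A)$. The trivial ``$\le b^{-n}$ per step'' bound only yields $\lesssim b^{-\ell}$, which is far too weak --- it would permit the ball to smear across $\asymp\Gamma(A)/b^{\ell}$ distinct level-$\ell$ cells and so only give $|B_{G_n}(x,R)|\lesssim\Gamma(A)^{d-1}b^{-\ell(d-1)}R^k$, useless when $\Gamma(A)\gg b^{\ell}$. What rescues it is that the path is confined to the $\asymp b^{n-\ell}$ slabs nearest $x$, and the hypothesis $\min(\gamma)=b$ forces even the coarsest of these to have refinement $\asymp\Gamma(A)\,b^{n-\ell}$, so every step moves the horizontal center by only $O\!\big(1/(\Gamma(A)\,b^{n-\ell})\big)$. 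Making this precise --- and tracking all constants in terms of $d,\gamma$ only, so that the final bounds are uniform in $n$ --- is the bulk of the argument; the remaining ingredients (translating between the flat slab index and the cell address, and the two elementary facts above) are routine, and are exactly the places where the hypotheses $\gamma_1=\gamma_b=\min(\gamma)=b$ are needed.
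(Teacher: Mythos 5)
Your proof is correct in substance and follows the same overall strategy as the paper's: pick the level $\ell$ at which a level-$\ell$ cell has diameter $\asymp R$, get the lower bound from the single cell containing $x$, and get the upper bound by showing the ball of radius $R$ meets only $O_{d,\gamma}(1)$ level-$\ell$ cells. The lower-bound half is essentially identical to the paper's (\pref{lem:contract_growth_lower_bound} plus \pref{lem:HabnDiam}). One small slip: you define $\ell$ as the \emph{largest} index with $c_2 b^{n-\ell}\le R$, but that is always $\ell=n$; you clearly mean the \emph{smallest} such index, which is what gives $b^{n-\ell}\asymp R$.

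The upper bound is where your route differs. The paper proves a reusable lemma (\pref{lem:growth_upper_bound}) for general products $\bS\tileprod\bT$, controlled only by the aspect ratio $\alpha_{\bS}$ and the maximum side $L_{\bT}$: it shows any short path stays inside the Minkowski-enlarged region $\region{\tilde N_{\bS}(\hat X)}$ of $\bS$-tiles reachable by paths using at most one edge per coordinate direction, and then counts $|\tilde N_{\bS}(\hat X)|$ via the degree bound of \pref{lem:degree_bound}. You instead argue directly for $\bT^{(d)}_{\gamma^{\otimes n}}$: you track the minimum slab refinement $\Gamma^\star$ among the slabs the path can possibly visit, bound per-step horizontal displacement by $1/\Gamma^\star$ and total vertical displacement by $R\,b^{-n}$, and conclude the ball is trapped in a box meeting $O(1)$ level-$\ell$ cells. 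The key input $\Gamma^\star\gtrsim_\gamma\Gamma(A)\,b^{n-\ell}$ is exactly where $\min(\gamma)=b$ and $\gamma_1=\gamma_b$ enter, and your ``comparable consecutive slabs'' fact (which uses $\gamma_1=\gamma_b$ to handle the lexicographic carries) is doing the same work that $\alpha_{\bT^{(d)}_\gamma}$ and \pref{cor:alpha-power-bound} do in the paper. At heart both are Euclidean displacement arguments --- the paper's \eqref{eq:gp3}--\eqref{eq:gp4} are precisely per-step and total displacement bounds --- so this is the same idea with different bookkeeping. The paper's version has the advantage of being stated as a general lemma for any $\bS\tileprod\bT$, which keeps the role of $\alpha$ and $L$ transparent; yours is more hands-on but makes the role of the slab refinements explicit, which some readers may find more concrete. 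Both are sound; your writeup would benefit from fixing the ``largest''/``smallest'' typo and spelling out, as you flag in your ``main obstacle'' paragraph, that the $O(1)$ vertically-adjacent level-$\ell$ layers have refinements within a $\gamma$-dependent factor of $\Gamma(A)$ (one application of the consecutive-slabs fact per layer crossed).
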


The second lemma, proved in \pref{sec:sep-size-lower-bound}
establishes a lower bound on the size of annular separators.

\begin{lemma}[Separator size]
\label{lem:T-gamma-separator-size}
For every $d \geq 2$ and $b \geq 1$, there is a number $c=c(d,b)$ such that for
   any $\gamma = \llangle \gamma_1,\ldots,\gamma_b\rrangle$ with $\min(\gamma) = b$, the
   following holds.
   Denote $k \seteq \gr{d-1}(\gamma)$.
   For any $n \geq 1$, if $G = G(\bT^{(d)}_{\gamma^{\tensor n}})$, and $v \in V(G)$,
   then there are at least $c R^k$ disjoint paths from $S_G(v,R)$
   to $S_G(v,R')$, for any $1 \leq R < R' \leq \diam(G)/(3(d+1))$.
\end{lemma}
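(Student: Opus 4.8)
Slicing $\bT^{(d)}_{\gamma^{\tensor n}}$ by a hyperplane $\{x_1=t\}$ perpendicular to one of the $d-1$ ``horizontal'' axes, a generic slice of the $\gamma_i^{d-1}$-grid filling slab $i$ is a $\gamma_i^{d-2}$-grid, and tracking the inter-slab incidences one sees that the tiles meeting $\{x_1=t\}$, with the adjacency inherited from $G$, form an induced subgraph of $G$ isomorphic to $G(\bT^{(d-1)}_{\gamma^{\tensor n}})$ --- every adjacency produced this way is a genuine $(d-1)$-volume tangency in $\R^d$, so it really is an \emph{induced} copy. By \pref{lem:T-gamma-vol-growth} applied in dimension $d-1$, such a copy has volume-growth exponent $\gr{d-1}(\gamma)$, which is the source of the exponent in the statement. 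Since $\min(\gamma)=b$ forces every level-$n$ tile to be ``tall'' (its $x_d$-side, of length $b^{-n}$, is its longest), every tile has $x_1$-side length at most $b^{-n}$, so two hyperplanes $\{x_1=t\},\{x_1=t'\}$ with $|t-t'|\ge 2b^{-n}$ meet disjoint families of tiles. The plan is to stack $\gtrsim R$ such parallel slices through the annulus $B_G(v,R')\setminus B_G(v,R)$ and harvest $\gtrsim R^{\gr{d-1}(\gamma)-1}$ disjoint radial paths from each.

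First I would set up the combinatorial description of graph balls that also underlies \pref{lem:T-gamma-vol-growth}. Because tiles are tall, a single step always moves between vertically aligned tiles of consecutive slabs, and a short computation shows that one step moves a tile's center by at most $b^{-n}$ in $\ell_\infty$, whence $\|x_u-x_w\|_\infty\le b^{-n}\,d_G(u,w)$ for all tiles $u,w$; together with a matching lower bound --- a ``combinatorial box'' $\bx(v,r)$, a contiguous range of slabs with an axis-parallel sub-grid in each, satisfying $\bx(v,c_1 r)\subseteq B_G(v,r)\subseteq\bx(v,c_2 r)$ --- one gets that $B_G(v,R)$ has $x_i$-extent $\asymp_{d,\gamma}R\,b^{-n}$ in every coordinate. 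The hypothesis $R'\le\diam(G)/(3(d+1))$, together with $\diam(G)\le(d+1)b^n$ (route straight down to a coarsest slab --- one of resolution exactly $b^n$, which exists since $\min\gamma=b$ --- cross its $b^n\times\cdots\times b^n$ grid, route straight back up), gives $R'\le b^n/3$. Hence along every horizontal axis $v$ has room on at least one side, and one may place $N\gtrsim_d R'$ pairwise tile-disjoint hyperplanes $H_1,\dots,H_N$ that all meet the annulus $B_G(v,R')\setminus B_G(v,R)$, each carrying an induced copy $\Gamma_j\cong G(\bT^{(d-1)}_{\gamma^{\tensor n}})$.

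Inside each $\Gamma_j$ I would extract $\gtrsim R^{\gr{d-1}(\gamma)-1}$ vertex-disjoint paths joining $S_G(v,R)$ to $S_G(v,R')$ and lying in the annulus: applying the combinatorial-box squeeze \emph{within} $\Gamma_j$ (which is the $(d-1)$-dimensional construction, to which \pref{lem:T-gamma-vol-growth} applies with exponent $\gr{d-1}(\gamma)$) shows that for a constant fraction of radii $\rho\in(R,R')$ the trace $S_G(v,\rho)\cap\Gamma_j$ contains $\gtrsim R^{\gr{d-1}(\gamma)-1}$ tiles forming a ``shell,'' and one then routes a $d_G(v,\cdot)$-monotone path outward from each member of a suitably laminar, well-separated sub-collection. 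Since the $\Gamma_j$ are tile-disjoint, the union over $j$ is a family of $\gtrsim_d R\cdot R^{\gr{d-1}(\gamma)-1}=R^{\gr{d-1}(\gamma)}$ vertex-disjoint $S_G(v,R)$--$S_G(v,R')$ paths. (Equivalently, and perhaps more cleanly, one argues through Menger's theorem: every vertex set separating the two spheres must meet each of the $\gtrsim R$ disjoint cross-sections in $\gtrsim R^{\gr{d-1}(\gamma)-1}$ vertices.)

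The one genuinely hard ingredient is the metric input feeding the two ``squeeze'' steps: establishing the combinatorial-box description of $B_G(v,R)$ in this anisotropic, self-similar graph, and --- the more delicate point --- controlling distances \emph{within} a cross-section $\Gamma_j$ against distances in the ambient $G$ (a priori only $d_G\le d_{\Gamma_j}$, and one must rule out that geodesics bulging out of $\Gamma_j$ collapse the shells $S_G(v,\rho)\cap\Gamma_j$; tallness is precisely what makes such transverse bulging unprofitable). This is exactly the analysis carried out for \pref{lem:T-gamma-vol-growth} in \pref{sec:vol-growth-analysis}, which is why it is deferred; granting it, everything else is combinatorial bookkeeping, including the verification that the resulting constant $c$ can be taken to depend only on $d$ and $b=\min(\gamma)$.
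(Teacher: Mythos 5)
Your high-level geometry is pointed in the right direction, but your slicing plan has a genuine gap, and the paper avoids it by using the \emph{dual} of your cross-sections.

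The paper's proof is much more direct than what you sketch. It fixes $h\approx\log_b R$ and uses \pref{lem:HabnDiam} to place a translated copy $\hat{A}$ of $\bT^{(d)}_{\gamma^{\otimes h}}$ inside $B_G(v,R)$. It then projects onto the \emph{last} $d-1$ coordinates: by \pref{obs:construction-projection}, the image $\Pi_{d-1}(\hat{A})$ has $(|\gamma|^{(d-1)})^h\asymp R^{\gr{d-1}(\gamma)}$ elements, the fibers $\Pi_{d-1}^{-1}(B;\bT^{(d)}_{\gamma^{\otimes n}})$ are pairwise vertex-disjoint \emph{paths} in $G$ (the ``$x_1$-columns''), and \pref{lem:inverse-diam} shows each such column has diameter $\geq b^n-1$, hence automatically exits $B_G(v,R')$ once $R'\lesssim b^n$. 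There is no recursion, no ambient-versus-intrinsic metric comparison, and no need to ``route monotone paths'': the columns \emph{are} the disjoint paths, and you simply count how many pass through $B_G(v,R)$.

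Your plan --- slice perpendicular to $x_1$ into $\sim R$ tile-disjoint copies $\Gamma_j\cong G(\bT^{(d-1)}_{\gamma^{\otimes n}})$ and pull $\sim R^{\gr{d-1}(\gamma)-1}$ disjoint annulus-crossing paths out of each --- founders on the per-slice count. If the paths must remain inside $\Gamma_j$ (as they must, for the union over $j$ to be vertex-disjoint), then the available mechanism is the $(d-1)$-dimensional version of the very lemma being proved (or, equivalently, the columns inside $\Gamma_j$), and that yields only $R^{\gr{d-2}(\gamma)}$ disjoint paths per slice. Since every $\gamma_i\geq b$ gives $\sum_i\gamma_i^{d-2}\geq b\sum_i\gamma_i^{d-3}$, one has $\gr{d-1}(\gamma)-1\geq\gr{d-2}(\gamma)$ with strict inequality as soon as some $\gamma_i>b$; so $R\cdot R^{\gr{d-2}(\gamma)}=R^{1+\gr{d-2}(\gamma)}$ can be a genuinely smaller power of $R$ than the claimed $R^{\gr{d-1}(\gamma)}$. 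If instead the paths are allowed to leave $\Gamma_j$, you lose disjointness across slices, and the phrase ``routes a $d_G(v,\cdot)$-monotone path outward from a laminar, well-separated sub-collection'' does not supply a disjointness mechanism --- two monotone paths started from nearby tiles can merge. The Menger reformulation inherits the same problem: lower-bounding a separator's intersection with $\Gamma_j$ by $R^{\gr{d-1}(\gamma)-1}$ is exactly the per-slice disjoint-path claim again. (Your observation that $|S_G(v,\rho)\cap\Gamma_j|\gtrsim R^{\gr{d-1}(\gamma)-1}$ might well be correct as a volume count, but a large shell does not by itself produce many disjoint outward paths.) The cleanest fix is to scrap the slices and work directly with the $x_1$-columns, as the paper does.
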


If $G$ is an undirected graph, let us write $[G]_m$ for the {\em $m$-subdivision of $G$,}
where each edge of $G$ is replaced by a path of length $m$.
We will sometimes consider $V(G) \subseteq V([G]_m)$ via the obvious identification.
The next lemma is proved in \pref{sec:sphere-packing}.

\begin{lemma}
   \label{lem:gamma-sphere-packing}
   Consider a sequence $\gamma = \langle \gamma_1,\ldots,\gamma_b\rangle$ with $\gamma_1 = \gamma_b$,
   and such that
   \begin{equation}\label{eq:integral}
      \max \left\{ \frac{\gamma_{i+1}}{\gamma_i}, \frac{\gamma_i}{\gamma_{i+1}} \right\} \in \N, \quad \forall 1 \leq i < b.
   \end{equation}
   Then for every $d \geq 2$, there are numbers $m=m(d,\gamma)$ and $M=M(d,\gamma)$
   such that for every $n \geq 0$:  If $G = G(\bT^{(d)}_{\gamma^{\otimes n}})$, then $[G]_m$ is $M$-uniformly
   sphere-packed in $\R^d$.
\end{lemma}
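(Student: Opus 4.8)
The plan is to discard the ambient geometry of the tiling altogether and rebuild $[G]_m$ from scratch as a ball packing, exploiting the layered combinatorial structure of $G = G(\bT^{(d)}_{\gamma^{\otimes n}})$.

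\textbf{Step 1: the combinatorial structure of $G$.} Write $\gamma^{\otimes n} = \llangle q_1,\dots,q_{b^n}\rrangle$. Recall that $\bT^{(d)}_{\gamma^{\otimes n}}$ consists of $b^n$ \emph{slabs}, the $j$th being $[0,1]^{d-1}\times[(j-1)/b^n, j/b^n]$ cut into an axis-parallel grid of $q_j^{d-1}$ congruent boxes. Hence $V(G)$ splits into layers $L_1,\dots,L_{b^n}$, where $L_j$ carries the adjacency of the $(d-1)$-dimensional grid $[q_j]^{d-1}$, the only further edges go between $L_j$ and $L_{j+1}$, and (when $q_{j+1}=r_j q_j$) each cell $a\in[q_j]^{d-1}$ is joined precisely to the block of $r_j^{d-1}$ cells of $[q_{j+1}]^{d-1}$ lying above it (symmetrically if $q_{j+1}=q_j/r_j$). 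The crucial point is that $r_j$ is bounded uniformly in $n$: incrementing the lexicographic index changes $q_j$ by the factor $\gamma_{i+1}/\gamma_i$ for a \emph{single} $i<b$, since every digit that "carries" passes from $\gamma_b$ to $\gamma_1$ and contributes the factor $\gamma_1/\gamma_b=1$ by the hypothesis $\gamma_1=\gamma_b$, while $\gamma_{i+1}/\gamma_i$ or its reciprocal is a fixed integer by \eqref{eq:integral}. Thus $r_j\le r_\star$ and $\deg_G(v)\le\Delta$ for all $v$, where $r_\star=r_\star(\gamma)$ and $\Delta:=2(d-1)+2r_\star^{d-1}$ depend only on $d,\gamma$ — not on $n$. (Observe that the tiles of $\bT^{(d)}_{\gamma^{\otimes n}}$ themselves are highly anisotropic, with aspect ratio growing exponentially in $n$ whenever $\gamma$ is non-constant, so the packing we seek genuinely cannot respect the tiling; only the layered combinatorics survives.)

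\textbf{Step 2: realizing the layers.} Fix $m,M$ to be chosen at the end, and give $L_j$ a scale $\sigma_j>0$ with $\sigma_{j+1}=\sigma_j\cdot(q_j/q_{j+1})$, so that $\sigma_{j+1}/\sigma_j\in[r_\star^{-1},r_\star]$ and $q_j\sigma_j=q_1\sigma_1$ is independent of $j$. Realize $L_j$ in the hyperplane $\{x_d=h_j\}$, for increasing heights $h_1<h_2<\cdots$ with small gaps, by one ball $\beta_a^{(j)}$ of radius $\sigma_j$ at each point of a cubical grid of side $\asymp \sigma_j$ indexed by $[q_j]^{d-1}$; the layer then occupies a horizontal box of side $\asymp q_j\sigma_j$, the same for every $j$. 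Replace each within-layer grid edge by a straight chain of $m$ equal balls of radius $\asymp\sigma_j$ joining the two cell-balls (the $\le 2(d-1)$ such chains at a cell leave it in distinct axis directions). Between $L_j$ and $L_{j+1}$ insert a \emph{refinement gadget}: taking the coarser layer, say $L_j$, run for each cell $a$ one chain of $m$ balls, with radii interpolating monotonically between $\sigma_j$ and $\sigma_{j+1}$ (a total factor $\le r_\star$ spread over $m$ steps), from $\beta_a^{(j)}$ to each of the $\le r_\star^{d-1}$ cell-balls of $L_{j+1}$ it is joined to; these chains stay in the slab $\{h_j\le x_d\le h_{j+1}\}$ and fan out to targets at horizontal distance $\lesssim\sigma_j$ and depth $\lesssim\sigma_j$. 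By construction each edge of $G$ becomes exactly one chain of exactly $m$ balls, distinct chains meet only at cell-balls, and there are no spurious tangencies, so the tangency graph of the whole family is precisely $[G]_m$.

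\textbf{Step 3: disjointness and the parameters.} The radius-ratio bound is built in: consecutive balls on a chain, and each cell-ball together with its chain-neighbours, have radii within a factor $\asymp r_\star$, so $M:=r_\star^{O(1)}$ works. For interior-disjointness: distinct layers sit at well-separated heights; within a layer the grid and its chains are disjoint since the grid side exceeds $2\sigma_j$; and for the gadget one must route the $\le\Delta$ chains emanating from a single cell-ball $\beta$ (up to $r_\star^{d-1}$ into each neighbouring layer, $\le 2(d-1)$ within the layer) pairwise-disjointly. One first enlarges $M$ so that the $d$-dimensional kissing number at radius-ratio $M$ exceeds $\Delta$, giving $\Delta$ well-separated tangency points on $\beta$; then, as $r_\star$ and $\Delta$ are constants, one takes $m=m(d,\gamma)$ large enough that each of the $\le\Delta$ bounded-displacement chains has room to bend away from $\beta$ and follow its nominal segment to its target inside a thin tube disjoint from the others. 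All thresholds depend only on $d$, $r_\star(\gamma)$ and $\Delta(d,\gamma)$, hence only on $d$ and $\gamma$, and the resulting finite ball family — after an overall rescaling into $\R^d$ — is the required $M$-uniform sphere packing of $[G]_m$.

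\textbf{Main obstacle.} The conceptual heart is Step 1: that the refinement ratios between consecutive layers are bounded by a constant depending only on $\gamma$ — which is exactly what the hypotheses $\gamma_1=\gamma_b$ and \eqref{eq:integral} provide; without it the scales $\sigma_j$ of adjacent layers would differ unboundedly and no fixed $m,M$ could possibly work. The remaining difficulty is purely bookkeeping: the pairwise-disjoint routing of the bounded number of gadget chains around each cell-ball, handled by choosing the ratio $M$ and the subdivision length $m$ large in terms of $d$ and $\gamma$.
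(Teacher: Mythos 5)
Your approach is genuinely different from the paper's, and the difference is worth spelling out. The paper does \emph{not} discard the ambient geometry: it first passes to the rescaled cube packing $\tilde{\bT}^{(d)}_{\gamma^{\otimes n}}$ (each tile becomes an axis-parallel cube, and the hypotheses $\gamma_1=\gamma_b$ and \eqref{eq:integral} guarantee a ``neat'' packing with bounded aspect ratio via \pref{cor:alpha-power-bound}), and then, in a separate lemma, replaces each cube by a star of spheres lying \emph{entirely inside that cube} whose leaves are tangent to prescribed boundary points. Interior-disjointness across different vertices' sphere families is then automatic because spheres belonging to different cubes are literally contained in disjoint cubes. You instead reconstruct the graph combinatorially from its layered structure, realize the layers at separated heights, and join everything by chains of balls. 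Your Step~1 is correct and is essentially the same observation the paper makes (the carry argument is just another way of seeing that $\alpha_{\bT^{(d)}_{\gamma^{\otimes n}}}\leq\alpha_{\bT^{(d)}_\gamma}$ and that the layer-$j$/layer-$(j+1)$ refinement ratio is one of the $\gamma_{i+1}/\gamma_i$). What the paper's route buys is that the entire interior-disjointness problem reduces to the local statement about one cube (\pref{lem:star-incidence}), which is then proved with an explicit angular-separation computation.

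There are two substantive issues with your Steps~2--3 as written. First, the stated scales are mutually inconsistent: cell-balls of radius $\sigma_j$ placed on a grid of side $\asymp\sigma_j$ already overlap, and a chain of $m$ balls each of radius $\asymp\sigma_j$ cannot possibly fit in a gap of length $\asymp\sigma_j$. Either the grid spacing must be $\asymp m\sigma_j$ (and then the layer occupies a box of side $\asymp m q_j\sigma_j$, not $\asymp q_j\sigma_j$), or the chain balls must have radius $\asymp\sigma_j/m$ (in which case the radius ratio $M$ between a cell-ball and an adjacent chain ball is $\asymp m$, not $r_\star^{O(1)}$). Either fix is consistent with the lemma since $m,M$ may both depend on $d,\gamma$, but the quantitative claim ``$M:=r_\star^{O(1)}$'' in Step~3 is wrong as stated and is in tension with your own subsequent enlargement of $M$. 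Second, and more importantly, the disjoint routing of the up to $\Delta$ chains leaving a single cell-ball is only asserted. ``One takes $m$ large enough that each of the $\le\Delta$ bounded-displacement chains has room to bend away from $\beta$ and follow its nominal segment to its target inside a thin tube disjoint from the others'' is the entire content of the hard part of the construction, and it needs an actual argument --- in particular you must produce $\Delta$ tangency points on $\beta$ whose associated segments stay at pairwise distance bounded below by a constant times the chain-ball radius all the way to their targets, and simultaneously avoid the within-layer chains, the between-layer chains headed the other way, and all balls of other cell-balls' gadgets. This is precisely what the paper's \pref{lem:star-incidence} establishes with the explicit estimate $\|z'_x - z'_y\| \geq \e/(2\sqrt{d})$ derived from the similar-triangles argument; your proposal needs a comparable lemma (or should invoke that one) rather than a ``taking $m$ large'' handwave. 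Until that is supplied, the proposal is a plausible outline rather than a proof.
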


With these results in our hand, let us first prove a finitary version of our main theorems.
The corresponding infinite version appears in \pref{sec:infinite-graphs}.
Define
\[
   s(d,k) \seteq d-1 + (k-d)\left(1-\frac{1}{d-1}\right).
\]
This represents our basic tradeoff:  One can construct $d$-dimensional geometric graphs
with uniform polynomial growth of degree $k+\e$ and such that every annular separator
has size $\Omega(R^{s(d,k)})$.

\begin{theorem}[Finite graph families]
   \label{thm:main-finite}
   For every $d \geq 3$, $k \geq d$, and $\e > 0$, there is a family $\cF$ of finite graphs satisfying:
   \begin{enumerate}
      \item For some $\tilde{k} \leq k+\e$ and every $G \in \cF$,
         \[
            |B_G(x,R)| \asymp_{d,\e} R^{\tilde{k}},\quad \forall x \in V(G), 1 \leq R \leq \diam(G).
         \]
      \item Each $G$ is $M$-uniformly sphere-packed in $\R^d$ for some $M \lesssim_{d,\e} 1$.
      \item 
            There is a number $c \gtrsim_{d,\e} 1$
            such that for every $G \in \cF$ and $x \in V(G)$, there are at least $c R^{s(d,k)}$ vertex-disjoint paths from
       $S_G(x,R)$ to $S_G(x,R')$ for every $1 \leq R < R' \leq c \diam(G)$.
   \end{enumerate}
\end{theorem}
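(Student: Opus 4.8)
The plan is to apply the three lemmas to a single, well-chosen sequence $\gamma$ and then pass to a subdivision. Fix $d\ge 3$, $k\ge d$, $\e>0$. I first want to produce a (large) even integer $b=b(d,k,\e)$ and a sequence $\gamma=\llangle\gamma_1,\dots,\gamma_b\rrangle$ of positive integers such that: \textup{(i)} $\min(\gamma)=\gamma_1=\gamma_b=b$; \textup{(ii)} $\gamma$ satisfies the integrality condition \pref{eq:integral}; \textup{(iii)} $\gr{d}(\gamma)\le k+\e$; and \textup{(iv)} $\gr{d-1}(\gamma)\ge s(d,k)$. Granting this, let $m=m(d,\gamma)$ be as in \pref{lem:gamma-sphere-packing} and put $\cF\seteq\{[G(\bT^{(d)}_{\gamma^{\otimes n}})]_m:n\ge 1\}$. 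Property (2) is then \pref{lem:gamma-sphere-packing} verbatim (with $M=M(d,\gamma)\lesssim_{d,\e}1$). Property (1) follows from \pref{lem:T-gamma-vol-growth}, which applies by (i) and yields uniform polynomial growth of degree $\tilde k\seteq\gr{d}(\gamma)\le k+\e$, after transporting the volume bound across the $m$-subdivision. Property (3) follows from \pref{lem:T-gamma-separator-size}, which applies by (i) and produces $\gtrsim R^{\gr{d-1}(\gamma)}$ vertex-disjoint paths between the relevant graph spheres, together with $\gr{d-1}(\gamma)\ge s(d,k)$ from (iv); this too must be transported across the subdivision.

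For the sequence itself I would use a two-scale profile. Let $b$ be even, let $b/2$ of the entries equal $b$ (split into two equal blocks placed at the two ends of the sequence), and let the middle $b/2$ entries equal $L\seteq qb$, where $q\in\N$ is chosen minimal subject to $\tfrac b2(b^{d-1}+L^{d-1})\ge b^{k+\e/2}$. Then (i) holds by construction, and (ii) holds because the only nontrivial consecutive ratios are $L/b=q\in\N$ at the two block interfaces. Because $k+\e/2>d$ we have $L\asymp b^{(k+\e/2-1)/(d-1)}\gg b$, so decreasing $q$ by one changes $|\gamma|^{(d)}$ only by a factor $1+o_b(1)$; hence minimality of $q$ forces $|\gamma|^{(d)}=\tfrac b2(b^{d-1}+L^{d-1})=b^{k+\e/2}(1+o_b(1))$, which is $\le b^{k+\e}$ for $b$ large, giving (iii). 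For (iv), using $L^{d-1}\asymp b^{k+\e/2-1}$ we get $L^{d-2}\asymp b^{(k+\e/2-1)(d-2)/(d-1)}$, and since $k+\e/2\ge d$ the term $\tfrac b2 L^{d-2}$ dominates $\tfrac b2 b^{d-2}$, so
\[
   |\gamma|^{(d-1)} \;=\; \tfrac b2\bigl(b^{d-2}+L^{d-2}\bigr) \;\asymp\; b^{\,1+(k+\e/2-1)\frac{d-2}{d-1}}\,.
\]
The algebraic identity $1+(k-1)\tfrac{d-2}{d-1}=s(d,k)$ (equivalently $s(d,k)=\tfrac{(d-2)k+1}{d-1}$) shows the exponent here equals $s(d,k)+\tfrac{(d-2)\e}{2(d-1)}$, so $\gr{d-1}(\gamma)=s(d,k)+\tfrac{(d-2)\e}{2(d-1)}-o_b(1)\ge s(d,k)$ for $b$ large, which is (iv). I expect this step to be the crux: a short convexity (moment) argument shows that a sequence with $\gr{d}(\gamma)=k$ exactly necessarily has $\gr{d-1}(\gamma)<s(d,k)$, so $s(d,k)$ is the extremal trade-off and the slack $\e$ permitted in the growth degree is precisely what is needed to attain (iv) with a genuine inequality.

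It remains to transport \pref{lem:T-gamma-vol-growth} and \pref{lem:T-gamma-separator-size} from $G_0\seteq G(\bT^{(d)}_{\gamma^{\otimes n}})$ to $H\seteq[G_0]_m$, which is routine bookkeeping. Here $d_H(u,v)=m\,d_{G_0}(u,v)$ for $u,v\in V(G_0)$, every vertex of $H$ lies within distance $m$ of a vertex of $G_0$, and $H$ has degree $\lesssim_{d,\e}1$ (adjacent spheres in the packing of \pref{lem:gamma-sphere-packing} have comparable radii, so $G_0$ has bounded degree, and subdivision only lowers it). Consequently $|B_H(x,R)|\asymp_{d,\e}|B_{G_0}(x',\Theta(R/m))|\asymp_{d,\e}R^{\tilde k}$ for $x'$ an original vertex within distance $m$ of $x$, giving property (1). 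For property (3), fix $x\in V(H)$, pick $x'\in V(G_0)$ with $d_H(x,x')\le m$, and for integers $R\ge 2m$ and $R'>R$ with $R'\le c_0\diam(H)$ (for a small $c_0=c_0(d)$) apply \pref{lem:T-gamma-separator-size} at $x'$ with radii $\lfloor R/m\rfloor-1$ and $\lceil R'/m\rceil+1$; this produces $\gtrsim_{d,\e}R^{\gr{d-1}(\gamma)}$ vertex-disjoint paths in $G_0$, which remain vertex-disjoint in $H$. Along any such path in $H$ the quantity $d_H(x,\cdot)$ changes by at most $1$ per edge while running (by the triangle inequality and the choice of radii) from a value $\le R$ to a value $\ge R'$, so the path meets $S_H(x,R)$ and subsequently $S_H(x,R')$; extracting these sub-paths preserves disjointness, and their number is $\gtrsim_{d,\e}R^{\gr{d-1}(\gamma)}\ge R^{s(d,k)}$ since $\gr{d-1}(\gamma)\ge s(d,k)$ and $R\ge 1$. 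For the leftover range $R<2m$ one takes the constant $c$ in the statement small enough that $cR^{s(d,k)}<1$ there, and uses connectedness of $H$ to exhibit a single path from $S_H(x,R)$ to $S_H(x,R')$ (a geodesic from $x$ to $S_H(x,R')$ passes through $S_H(x,R)$). This yields property (3) with $c\gtrsim_{d,\e}1$, completing the argument.
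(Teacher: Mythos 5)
Your proof is correct and follows the paper's overall blueprint (combine \pref{lem:T-gamma-vol-growth}, \pref{lem:T-gamma-separator-size}, and \pref{lem:gamma-sphere-packing} with a well-chosen sequence $\gamma$, then pass to $m$-subdivisions), but it deviates in two worthwhile respects.

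First, you use a different test sequence: a bi-block profile with $b/2$ entries equal to $b$ at the two ends and $b/2$ entries equal to $L=qb$ in the middle, rather than the paper's $\gamma^{(p,q,h)}=\langle b, tb,\dots,tb, b\rangle$ indexed by a rational $p/q$ and a scaling parameter $h$. Both satisfy $\gamma_1=\gamma_b=\min(\gamma)=b$ and \pref{eq:integral}, so the choice is largely cosmetic; your version avoids having to realize $k$ as a ratio $p/q$.

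Second, and more substantively, you deliberately \emph{overshoot} the growth degree: you tune $\gamma$ so that $\gr{d}(\gamma)\approx k+\e/2$ and then exploit the strict monotonicity of $s(d,\cdot)$ for $d\ge3$ to obtain $\gr{d-1}(\gamma)\ge s(d,k)$ with room to spare. This is a genuine improvement over what the paper makes explicit. The paper's \pref{lem:degree-limit} only shows $\gr{d-1}(\gamma^{(p,q,h)})\to s(d,k)$; in fact, as you observe via the power-mean inequality (which gives $|\gamma|^{(d-1)}\le b\,(|\gamma|^{(d)}/b)^{(d-2)/(d-1)}$ with equality only when $\gamma$ is constant), the convergence is necessarily from \emph{below} whenever $k>d$. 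So invoking \pref{lem:degree-limit} at $k$ itself yields paths of order $R^{s(d,k)-o(1)}$, not $R^{s(d,k)}$; to literally meet item (3) one must apply it at some $k'\in(k,k+\e)$, which is exactly what your overshoot does. Your argument thus makes explicit a small adjustment that the paper leaves implicit.

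You also spell out the transport of the volume-growth and separator bounds across the $m$-subdivision $[G]_m$, which the paper elides; your bookkeeping there (uniform bound on $m$ and on degrees, rescaling radii by $m$, extracting sub-paths across the shells $S_H(x,R)$ and $S_H(x,R')$, and handling $R<2m$ via a single geodesic and a small constant $c$) is correct. In short: same skeleton, a different but valid choice of $\gamma$, and a cleaner handling of the $\gr{d-1}(\gamma)\ge s(d,k)$ threshold.
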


In light of \pref{lem:T-gamma-vol-growth}--\pref{lem:gamma-sphere-packing},
we can take $\cF = \left\{ [G(\bT_{\gamma^{\otimes n}}^{(d)})]_m : n \geq 0 \right\}$ as long as we can find
a sequence $\gamma$ suited to the parameters.
To this end, consider $d \geq 2$ and parameters $h,p,q \in \N$ such that $p \geq q d$.
	 Define the sequence
	\[
   \gamma^{(p, q, h)} \seteq \langle b,
   \underbrace{t b, \ldots, t b}_{b-2 \textrm{ copies}}, b\rangle,
\]  
where 
	 $b \seteq h^{q(d-1)}$ and $t \seteq h^{p - d q}$. 
    Note that $t \geq 1$ since $p \geq q d$.
    Moreover, $\gamma^{(p,q,h)}$ satisfies \eqref{eq:integral} by construction.

	\begin{lemma}
	\label{lem:degree-limit} 
	 	Let $d \geq 2$ and $k \seteq p/q$ be as above. Then for all $\epsilon > 0$, there is some $h_0 \in \N$ so that for all $h \geq h_0$ the following statements hold:
	 	\begin{enumerate}
         \item $\left|\gr{d}(\gamma^{(p, q, h)}) - k\right| < \epsilon$.
         \item $\left|\gr{d-1}(\gamma^{(p, q, h)}) - s(d,k)\right| < \epsilon$.		
	 			\end{enumerate}
	 \end{lemma}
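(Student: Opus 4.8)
The plan is to compute $|\gamma^{(p,q,h)}|^{(d)}$ and $|\gamma^{(p,q,h)}|^{(d-1)}$ essentially on the nose, extract their leading-order behaviour as $h\to\infty$, and then observe that $\gr{d}(\gamma^{(p,q,h)})$ and $\gr{d-1}(\gamma^{(p,q,h)})$ are ratios of logarithms whose limits simplify (by pure arithmetic, using $k=p/q$) to $k$ and $s(d,k)$ respectively. Uniform convergence in the relevant range then produces the threshold $h_0$.

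First I would record the explicit values. Write $\gamma=\gamma^{(p,q,h)}$, $b=h^{q(d-1)}$, $t=h^{p-dq}$, so $|\gamma|=b$ and $\log b = q(d-1)\log h$, $\log t = (p-dq)\log h$. Since $\gamma$ has first and last entries $b$ and $b-2$ interior entries equal to $tb$,
\[
   |\gamma|^{(d)} = 2b^{d-1}+(b-2)(tb)^{d-1} = b^{d-1}\bigl(2+(b-2)t^{d-1}\bigr),
   \qquad
   |\gamma|^{(d-1)} = b^{d-2}\bigl(2+(b-2)t^{d-2}\bigr).
\]
For $h$ large enough that $b\geq 4$, and using $t\geq 1$, one has $\tfrac12\, b t^{j} \leq 2+(b-2)t^{j}\leq 3\,b t^{j}$ for $j\in\{d-2,d-1\}$, so
\[
   \log|\gamma|^{(d)} = d\log b + (d-1)\log t + E,
   \qquad
   \log|\gamma|^{(d-1)} = (d-1)\log b + (d-2)\log t + E',
\]
with $|E|,|E'|\leq \log 3$ (and $E=E'=0$ when $t=1$, i.e. $p=dq$).

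Next I would divide by $\log b$ and substitute the values of $\log b,\log t$ in terms of $\log h$:
\[
   \gr{d}(\gamma) = d + (d-1)\frac{p-dq}{q(d-1)} + \frac{E}{\log b}
   = k + \frac{E}{q(d-1)\log h},
\]
using $(d-1)\frac{p-dq}{q(d-1)} = \frac{p}{q}-d = k-d$; and similarly
\[
   \gr{d-1}(\gamma) = (d-1) + (d-2)\frac{p-dq}{q(d-1)} + \frac{E'}{\log b}
   = \Bigl(d-1+(k-d)\tfrac{d-2}{d-1}\Bigr) + \frac{E'}{q(d-1)\log h}
   = s(d,k) + \frac{E'}{q(d-1)\log h},
\]
since $(d-2)\frac{p-dq}{q(d-1)} = (k-d)\frac{d-2}{d-1} = (k-d)\bigl(1-\tfrac1{d-1}\bigr)$. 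Both error terms are at most $\frac{\log 3}{q(d-1)\log h}$ in absolute value, so choosing $h_0$ so that $b\geq 4$ and $\frac{\log 3}{q(d-1)\log h_0} < \epsilon$ gives both inequalities for all $h\geq h_0$.

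There is no genuine obstacle here: the argument is a short asymptotic computation. The only points requiring care are (a) factoring out the right power of $b$ so the residual factor $2+(b-2)t^{j}$ is sandwiched between absolute constant multiples of $b t^{j}$ uniformly in $h$, giving the $O(1)$ (indeed $\le\log 3$) control on $E,E'$; and (b) the bookkeeping that the limiting constant for $\gr{d-1}$ coincides with $s(d,k) = d-1+(k-d)(1-\tfrac1{d-1})$. The degenerate cases $p=dq$ (where $t=1$ and the identities are exact) and $d=2$ (where $\gr{1}(\gamma)\equiv 1 = s(2,k)$) are covered by the very same computation.
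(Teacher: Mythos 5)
Your proof is correct and takes essentially the same route as the paper: both compute $|\gamma^{(p,q,h)}|^{(d)} = b^{d-1}(2+(b-2)t^{d-1})$, rewrite $t$ in terms of $b$ using $k=p/q$, and observe that $\gr{d}$ and $\gr{d-1}$ converge to $k$ and $s(d,k)$ as $h\to\infty$. The only difference is presentational — the paper phrases the conclusion as a limit, while you carry explicit $O(1/\log h)$ error terms, which yields the same threshold $h_0$.
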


\begin{proof}
   It holds that
\begin{align*}
   \gr{d}(\gamma^{(p,q,h)}) = \log_b(|\gamma^{(p,q,h)}|^{(d)}) &= \log_b \left(b^{d-1}(2+(b-2) t^{d-1})\right) \\
   &= \log_b \left(b^{d-1}(2+(b-2)h^{(d-1)(p - d q)})\right) \\
	&= \log_b \left(b^{d-1}(2+(b-2) b^{k - d})\right).
\end{align*}

 Furthermore, we have
\[
\lim_{h \rightarrow \infty} \log_b(|\gamma^{(p,q,h)}|^{(d)}) = 
\lim_{b \rightarrow \infty} \log_b(b^{d-1}(2+(b-2) b^{(k - d)})) = k.
\]
Therefore for all $\epsilon > 0$, 
for all sufficiently large values of $h$, it holds that
$|\gr{d}(\gamma^{(p,q,h)}) - k| \leq \epsilon$.
Similarly, we have
\[
   \lim_{b \to \infty} \log_b (|\gamma^{(p,q,h)}|^{(d-1)}) = s(d,k),
\]
hence we can choose $h$ sufficiently large to as to satisfy the second condition as well.
\end{proof}

\subsection{Construction of the infinite graphs}
\label{sec:infinite-graphs}

For this section alone, we consider tilings of the nonnegative orthant $[0,\infty)^d$.

Note that for any sequence $\gamma = \langle \gamma_1,\ldots,\gamma_b\rangle$,
one can view $G(\bT_{\gamma}^{(d)})$ as the tangency graph
of a packing of cubes by changing the height of cubes in layer $i$
from $1/b$ to $b/\gamma_i$.  See \pref{fig:cube-packing} for an example.
Let $\tilde{\bT}_{\gamma}^{(d)}$ denote this rescaled tiling.  By convention, we insist that
one corner of the tiling still lies at the origin, implying that
\[
   \llbracket \tilde{\bT}_{\gamma}^{(d)} \rrbracket = [0,1]^{d-1} \times [0,H(\gamma)],
\]
where
\[
   H(\gamma) \seteq b \left(\frac{1}{\gamma_1} + \cdots + \frac{1}{\gamma_b}\right).
\]
If we now assume that $\gamma_1=\gamma_b=b$,
then we have the chain of inclusions:
\[
   \tilde{\bT}^{(d)}_{\gamma^{\tensor 0}} \subseteq \cdots \subseteq  \tilde{\bT}^{(d)}_{\gamma^{\tensor (n-1)}} \subseteq  \tilde{\bT}^{(d)}_{\gamma^{\tensor n}} \subseteq \tilde{\bT}^{(d)}_{\gamma^{\tensor (n+1)}} \subseteq \cdots
\]
This gives rise, in a straightforward way, to the infinite tiling $\tilde{\bT}^{(d)}_{\gamma^{\otimes \N}}$, with
$\llbracket \tilde{\bT}^{(d)}_{\gamma^{\otimes \N}}\rrbracket = [0,\infty)^d$.
We define the infinite tangency graph $\hat{G}_{\gamma}^{(d)} \seteq G({\bT}_{\gamma^{\tensor \N}}^{(d)})$.

\begin{theorem}
   If $\gamma = \langle \gamma_1,\ldots,\gamma_b\rangle$ has $\gamma_1=\gamma_b=b=\min(\gamma)$, then
   $\hat{G}_{\gamma}^{(d)}$ has uniform polynomial growth of degree $\gr{d}(\gamma)$, and
   for every $x \in V(G)$ and $R' > R \geq 1$, there are at least $c R^{\gr{d-1}(\gamma)}$
   vertex-disjoint paths from $S_G(x,R)$ to $S_G(x,R')$, where $c \gtrsim_{\gamma,d} 1$.
\end{theorem}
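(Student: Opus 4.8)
The plan is to realize $\hat G \seteq \hat{G}_{\gamma}^{(d)}$ as an increasing union of the finite graphs $G_n \seteq G(\bT^{(d)}_{\gamma^{\tensor n}})$ and to transfer \pref{lem:T-gamma-vol-growth} and \pref{lem:T-gamma-separator-size} to the limit by exhaustion. Since $\gamma_1 = \gamma_b = b = \min(\gamma)$, the chain of inclusions $\tilde{\bT}^{(d)}_{\gamma^{\tensor 0}} \subseteq \tilde{\bT}^{(d)}_{\gamma^{\tensor 1}} \subseteq \cdots$ from the construction exhibits each $\tilde{\bT}^{(d)}_{\gamma^{\tensor n}}$ as a sub-tiling of the infinite tiling $\tilde{\bT}^{(d)}_{\gamma^{\tensor\N}}$, occupying the corner region $\region{\tilde{\bT}^{(d)}_{\gamma^{\tensor n}}}$. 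Letting $W_n \subseteq V(\hat G)$ be the set of tiles of the infinite tiling lying in this region, the height rescaling $\tilde{\cdot}$ does not change tangencies, so $\hat G[W_n]$ is canonically identified with $G_n$, with $W_0 \subseteq W_1 \subseteq \cdots$ and $\bigcup_n W_n = V(\hat G)$. Moreover, each $G_n$ has degree bounded by a constant depending only on $d,\gamma$ (apply \pref{lem:T-gamma-vol-growth} with $R=1$), and every tile of the infinite tiling is surrounded by tiles of comparable size and hence lies in the interior of some $W_n$ with all its $\hat G$-neighbors also in $W_n$; thus the uniform degree bound passes to $\hat G$, and in particular every ball $B_{\hat G}(x,R)$ is finite.

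The one structural point I would check carefully is that, for $n$ large, the metric of $\hat G$ near $x$ coincides with that of $G_n$. Because $G_n = \hat G[W_n]$ is an induced subgraph, $d_{\hat G}(u,v) \leq d_{G_n}(u,v)$ for all $u,v \in W_n$. Conversely, if $x \in W_n$ and $B_{\hat G}(x,r) \subseteq W_n$, then any $\hat G$-geodesic from $x$ to a point $y$ with $d_{\hat G}(x,y)\leq r$ has all its vertices in $B_{\hat G}(x,r)\subseteq W_n$ and so is a path in $G_n$, giving $d_{G_n}(x,y) = d_{\hat G}(x,y)$. Consequently $B_{\hat G}(x,r) = B_{G_n}(x,r)$ and $S_{\hat G}(x,r') = S_{G_n}(x,r')$ for every $r' \leq r$. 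This is the only step where one must verify that the infinite construction introduces no shortcuts near $x$, and it is where essentially all of the (routine) work lies.

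With this in hand, uniform polynomial growth of degree $k \seteq \gr{d}(\gamma)$ follows immediately: given $x \in V(\hat G)$ and $R \geq 1$, use finiteness of $B_{\hat G}(x,R)$, the exhaustion $\bigcup_n W_n = V(\hat G)$, and $\diam(G_n)\to\infty$ to choose $n$ with $B_{\hat G}(x,R)\subseteq W_n$ and $R\leq \diam(G_n)$. Then $B_{\hat G}(x,R) = B_{G_n}(x,R)$ by the previous paragraph, and \pref{lem:T-gamma-vol-growth} gives $C^{-1}R^k \leq |B_{G_n}(x,R)| \leq C R^k$ for a constant $C = C(d,\gamma)$ independent of $n$ and of $x$, which is exactly uniform polynomial growth of degree $k$ for $\hat G$.

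For the disjoint paths, given $x \in V(\hat G)$ and $R' > R \geq 1$, choose $n$ large enough that $B_{\hat G}(x,R')\subseteq W_n$ and $\diam(G_n) \geq 3(d+1)R'$. Then $S_{\hat G}(x,R) = S_{G_n}(x,R)$ and $S_{\hat G}(x,R') = S_{G_n}(x,R')$, and \pref{lem:T-gamma-separator-size} — applicable since $1 \leq R < R' \leq \diam(G_n)/(3(d+1))$ — supplies at least $c(d,b)\,R^{\gr{d-1}(\gamma)}$ vertex-disjoint paths from $S_{G_n}(x,R)$ to $S_{G_n}(x,R')$ inside $G_n$. Since $G_n = \hat G[W_n]$ is a subgraph of $\hat G$, these are vertex-disjoint paths from $S_{\hat G}(x,R)$ to $S_{\hat G}(x,R')$ in $\hat G$, so the conclusion holds with $c = c(d,b) \gtrsim_{\gamma,d} 1$. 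The only genuine obstacle throughout is the metric-comparison step of the second paragraph; the rest is bookkeeping to pick $n$ large enough for the two finitary lemmas to fire.
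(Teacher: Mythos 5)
Your proof is correct and follows essentially the same approach as the paper: realize $\hat{G}$ as the increasing union of the induced subgraphs $G_n$, show the metrics of $\hat{G}$ and $G_n$ coincide on balls that are contained in $G_n$ once $n$ is large enough, and then invoke \pref{lem:T-gamma-vol-growth} and \pref{lem:T-gamma-separator-size}. The only cosmetic difference is in the metric-coincidence step: you take $n$ with $B_{\hat{G}}(x,R') \subseteq W_n$ and argue that $\hat{G}$-geodesics to nearby points stay inside that ball, whereas the paper takes $n$ with $B_{\hat{G}}(v,2R') \subseteq V(G_n)$ and argues that any path escaping $V(G_n)$ has length at least $2R'$; both work (your version is slightly tighter), and you additionally spell out the finiteness of balls and the exhaustion, which the paper leaves implicit.
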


\begin{proof}
   For $n \geq 1$, denote $G_n \seteq G(\bT_{{\gamma}^{\tensor n}}^{(d)})$ and $\hat{G} \seteq \hat{G}_{\gamma}^{(d)}$.
   We can think of $G_n$ as an induced subgraph of $\hat{G}$ in the obvious way.
   Consider a vertex $v \in V(\hat{G})$ and radii $R' > R \geq 1$.
   Then there is some $n \geq 1$ such that $B_{\hat{G}}(v,2 R') \subseteq V(G_n)$.
   
   In this case, the it holds that $d_{\hat{G}}(x,y) = d_{G_n}(x,y)$ for all $x,y \in B_{\hat{G}}(v,R')$,
   since any path originating in $B_{\hat{G}}(v,R')$ and leaving $V(G_n)$ must have length at least $2R'$.
   Therefore $B_{\hat{G}}(v,R')=B_{G_n}(v,R')$, and the uniform polynomial growth and seperator size
   assertions then follow immediately from \pref{lem:T-gamma-vol-growth} and \pref{lem:T-gamma-separator-size}.
\end{proof}

The following theorem is proved in \pref{sec:sphere-packing}.

\begin{theorem}\label{thm:sphere-packing-ghat}
   If $\gamma=\langle \gamma_1,\ldots,\gamma_b \rangle$ satisfies $\gamma_1=\gamma_b$ and \eqref{eq:integral},
   then there is some number $m=m(d,\gamma)$ such that $[\hat{G}_{\gamma}^{(d)}]_m$ is uniformly sphere-packed in $\R^d$.
\end{theorem}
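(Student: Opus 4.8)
The plan is to deduce \pref{thm:sphere-packing-ghat} from the finite statement \pref{lem:gamma-sphere-packing}, using the fact (set up in \pref{sec:infinite-graphs} and already used in the preceding subsection) that the finite graphs $G_n \seteq G(\bT^{(d)}_{\gamma^{\otimes n}})$ form an increasing chain of induced subgraphs of $\hat{G} \seteq \hat{G}^{(d)}_\gamma$ with $\hat{G} = \bigcup_{n} G_n$. After $m$-subdivision this gives $[\hat{G}]_m = \bigcup_n [G_n]_m$, with each $[G_n]_m$ an induced subgraph of $[\hat{G}]_m$ (since $G_n$ is induced in $\hat{G}$, any $\hat{G}$-edge with both endpoints in $V(G_n)$, together with its subdivision vertices, already belongs to $[G_n]_m$). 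Fix the constants $m = m(d,\gamma)$ and $M = M(d,\gamma)$ from \pref{lem:gamma-sphere-packing}, so that for each $n$ there is an $M$-uniform sphere packing $\cP_n = (S^{(n)}_v)_{v \in V([G_n]_m)}$ in $\R^d$ realizing $[G_n]_m$; write $S^{(n)}_v$ for the sphere of radius $r^{(n)}_v > 0$ centered at $c^{(n)}_v$.

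The first thing I would do is pass to a subsequential limit of the packings $\cP_n$. Fix a vertex $v_0$ lying in every $V(G_n)$ and apply a similarity of $\R^d$ to each $\cP_n$ so that $c^{(n)}_{v_0} = 0$ and $r^{(n)}_{v_0} = 1$. If $v$ is at distance $\ell$ from $v_0$ in $[\hat{G}]_m$, then for all large $n$ a shortest path from $v_0$ to $v$ in $[G_n]_m$ has length $\ell$, and walking along it while using that consecutive (hence tangent) spheres have radius ratio in $[M^{-1},M]$ yields $M^{-\ell} \le r^{(n)}_v \le M^{\ell}$ and $\|c^{(n)}_v\| \le 2\ell M^{\ell}$. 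So for each fixed $v$ the pairs $(c^{(n)}_v, r^{(n)}_v)$ lie in a fixed compact subset of $\R^d \times (0,\infty)$; enumerating $V([\hat{G}]_m)$ and diagonalizing, I would extract a subsequence along which $(c^{(n)}_v, r^{(n)}_v) \to (c_v, r_v)$ for every $v$, with $r_v \ge M^{-\ell} > 0$. Let $\cP = (S_v)_v$ be the resulting collection. Interior-disjointness passes to the limit, so $\cP$ is a packing; if $\{u,v\} \in E([\hat{G}]_m)$ then $\{u,v\} \in E([G_n]_m)$ for all large $n$, hence $\|c^{(n)}_u - c^{(n)}_v\| = r^{(n)}_u + r^{(n)}_v$, and in the limit $\|c_u - c_v\| = r_u + r_v$ with $r_u,r_v>0$, so $S_u,S_v$ are genuinely tangent; the ratio $r_u/r_v$ of any tangent pair also stays in $[M^{-1},M]$. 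The one delicate point — and the main obstacle — is ruling out \emph{spurious} tangencies: for a non-edge $\{u,v\}$ of $[\hat{G}]_m$, interior-disjointness only gives $\|c_u - c_v\| \ge r_u + r_v$, and this must be upgraded to a strict inequality, as otherwise $\cP$ has tangency graph strictly larger than $[\hat{G}]_m$.

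I would resolve this by the following cleaner route, which in fact avoids the limiting argument altogether. Observe that the construction proving \pref{lem:gamma-sphere-packing} — which converts the cube packing $\tilde{\bT}^{(d)}_{\gamma^{\otimes n}}$ into a sphere packing of $[G_n]_m$ — can be carried out compatibly with the nesting $\tilde{\bT}^{(d)}_{\gamma^{\otimes n}} \subseteq \tilde{\bT}^{(d)}_{\gamma^{\otimes (n+1)}}$, so that, after one global normalization, $\cP_n \subseteq \cP_{n+1}$ for every $n$. Then $\cP_\infty \seteq \bigcup_n \cP_n$ is a single interior-disjoint family whose tangency graph equals $\bigcup_n [G_n]_m = [\hat{G}]_m$: a non-edge pair already has interior-distance strictly exceeding the sum of radii at the finite stage at which both spheres appear, and no sphere moves afterward. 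Since the radius ratio of tangent pairs is bounded by the same $M=M(d,\gamma)$ at every finite stage, $\cP_\infty$ exhibits $[\hat{G}]_m$ as $M$-uniformly sphere-packed in $\R^d$, which is what we want. Alternatively, keeping the compactness argument, it would suffice to extract from the construction a uniform bound $\|c^{(n)}_u - c^{(n)}_v\| - (r^{(n)}_u + r^{(n)}_v) \ge \epsilon(d,\gamma,\ell) > 0$ for all non-tangent pairs at distance $\le \ell$ from $v_0$, which then survives the limit. I expect the compatibility route to be the one actually used, since \pref{lem:gamma-sphere-packing} and \pref{thm:sphere-packing-ghat} are both established by the construction in \pref{sec:sphere-packing}.
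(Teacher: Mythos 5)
Your second route is the right idea, and it is close in spirit to what the paper does, but the paper's organization is cleaner and sidesteps the one point where your argument is slightly imprecise. The paper factors the problem through an intermediate combinatorial notion: it first shows (in the unnamed lemma preceding \pref{lem:neat-cube-sphere-packing}) that the \emph{infinite} rescaled tiling $\tilde{\bT}^{(d)}_{\gamma^{\otimes\N}}$ is itself an $\alpha$-uniform neat cube packing of $[0,\infty)^d$ whose tangency graph is $\hat{G}^{(d)}_\gamma$, with the aspect ratio controlled by \pref{cor:alpha-power-bound}; it then observes that \pref{lem:neat-cube-sphere-packing} is an entirely local, cube-by-cube construction (replace each cube $A$ by a subdivided star of spheres inside $A$, determined solely by the tangency points $P_A$ of $A$ with its neighbors), and therefore applies verbatim to an infinite cube packing. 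There is no need to pass from finite to infinite at the level of sphere packings at all. By contrast, your claim that ``after one global normalization, $\cP_n \subseteq \cP_{n+1}$'' is not literally true: the spheres $\cS_A$ placed inside a cube $A$ depend on $P_A$, i.e.\ on the set of neighbors of $A$, and cubes sitting on the top face of $\region{\tilde{\bT}^{(d)}_{\gamma^{\otimes n}}}$ acquire new neighbors in $\tilde{\bT}^{(d)}_{\gamma^{\otimes(n+1)}}$, so their sphere arrangements change. What is true (and is all you need, and is what the paper exploits) is that $\cS_A$ stabilizes once $n$ is large enough that every neighbor of $A$ in the infinite cube packing is already present; equivalently, one may simply run the construction once on the infinite packing. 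Your first, compactness-based route is genuinely different, and your diagnosis of its obstacle is correct: a subsequential limit of interior-disjoint packings can develop spurious tangencies, turning $\ge$ into $=$ for non-edges, so some uniform non-tangency gap would need to be extracted. The paper avoids this entirely; I would recommend leading with the local/compatibility argument (stated in the corrected form above, or via the paper's neat-cube-packing abstraction) rather than the limiting one.
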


\section{Volume growth analysis}
\label{sec:vol-growth-analysis}

Our goal is now to prove \pref{lem:T-gamma-vol-growth}. 
The next section provides a few key lemmas about the size of balls in products of tilings,
which are mostly straightforward generalizations of the bounds in \cite{EL20} (for the case $d=2$).
With these in hand, we prove \pref{lem:T-gamma-vol-growth} in \pref{sec:vol-growth-gamma}. 

\subsection{Volume growth in tile products}
\label{sec:vol-growth-prod}

The next lemma is straightforward.

\begin{lemma}
\label{lem:contract_growth_lower_bound}
Consider $\bS,\bT \in \cT_d$ and $G=G(\bS\tileprod\bT)$.
If $X \in \bS\tileprod\bT$, then $|B_G(X, \diam(G(\bT)))| \geq |\bT|$.
\end{lemma}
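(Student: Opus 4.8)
The plan is to exploit the ``block'' structure of the tiling product $\bS\tileprod\bT$. By definition, the tiles of $\bS\tileprod\bT$ are naturally indexed by pairs $(A,B)$ with $A \in \bS$ and $B \in \bT$, and for a fixed $A \in \bS$ the tiles $W_A \seteq \{(A,B) : B \in \bT\}$ are obtained by placing inside $A$ an affinely rescaled copy of the entire tiling $\bT$ (the rescaling multiplies the $i$th coordinate by $\ell_i(A) > 0$ and then translates). First I would fix the given tile $X \in \bS\tileprod\bT$ and let $A \in \bS$ be the tile of $\bS$ containing $X$, so that $X = (A,B_0)$ for some $B_0 \in \bT$.

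Next I would check that the induced subgraph $G[W_A]$ contains an isomorphic copy of $G(\bT)$ (under the identification $B \leftrightarrow (A,B)$). The key point is that the adjacency relation defining the tangency graph --- sharing a face of positive $(d-1)$-dimensional volume --- is preserved by the coordinate-wise positive scaling used in the product: if $B,B' \in \bT$ share such a face, then the rescaled copies of $B,B'$ sitting inside $A$ also share a common face, whose $(d-1)$-volume is the original volume multiplied by $\prod_{i} \ell_i(A)$ restricted to the relevant coordinates, which is strictly positive. Hence every edge of $G(\bT)$ survives as an edge of $G[W_A]$.

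It then follows that for every $B \in \bT$,
\[
   d_G(X,(A,B)) \;\leq\; d_{G[W_A]}(X,(A,B)) \;\leq\; d_{G(\bT)}(B_0,B) \;\leq\; \diam(G(\bT)),
\]
where the first inequality holds because $G[W_A]$ is a subgraph of $G$ (so passing to $G$ can only shorten distances), the second because every $G(\bT)$-path lifts to a walk of the same length in $G[W_A]$, and the last because $G(\bT)$ is connected and finite, hence of finite diameter. Therefore the whole block $W_A$, which consists of exactly $|\bT|$ tiles, lies in $B_G(X,\diam(G(\bT)))$, giving $|B_G(X,\diam(G(\bT)))| \geq |\bT|$. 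There is essentially no real obstacle here; the only step needing (routine) care is verifying that positive coordinate-wise scaling preserves the ``shares a $(d-1)$-face'' adjacency, so that no edge of $G(\bT)$ is lost when it is embedded as the block $W_A$ --- extra edges of $G$ incident to $W_A$ only help and are harmless for a lower bound.
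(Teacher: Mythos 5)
Your proof is correct, and since the paper does not give a proof of this lemma (it simply calls it ``straightforward''), the argument you give is presumably the intended one. You identify the block $W_A$ of $|\bT|$ tiles sitting inside the parent tile $A \in \bS$ that contains $X$, observe that the affine map $x \mapsto p(A) + \mathrm{diag}(\ell(A))\,x$ carries $\bT$ bijectively onto this block while preserving the ``shares a $(d-1)$-face of positive volume'' relation (it is a positive coordinate-wise scaling followed by a translation), and conclude that every tile of $W_A$ is within $G$-distance $\diam(G(\bT))$ of $X$. The only implicit prerequisite is that $G(\bT)$ is connected so that its diameter is finite, and the paper has stipulated this for all tilings considered. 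Nothing is missing.
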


Let $\{e_1,\ldots,e_d\}$ denote the standard basis of $\R^d$.
If $\bT \in \cT_d$, we write $E_i(\bT) \subseteq E(G(\bT))$ for the set of edges in the $i$th direction,
i.e., those edges $\{A,B\} \in E(G(\bT))$ where $A \cap B$ is orthogonal to $e_i$.
Thus we have a partition $E(G(\bT)) = E_1(\bT) \cup \cdots \cup E_d(\bT)$.

For $A \in \bT$ and $1 \leq i \leq d$, 
denote
\[
   N_{\bT}(A,i) = \{ A' \in \bT : \{A,A'\} \in E_i(\bT) \},
\] 
and $N_{\bT}(A) \seteq N_{\bT}(A,1) \cup \cdots \cup N_{\bT}(A, d)$.
Moreover, we define
\begin{align*}
\alpha_{\bT}(A,i) &\seteq \max \left\{ \frac{\ell_j(A)}{\ell_j(B)}: B \in N_{\bT}(A, i) , 1 \leq j \leq d \right\}\\
   \alpha_{\bT}(i) &\seteq \max \left\{ \alpha_{\bT}(A, i) : A \in \bT\right\} \\
      \alpha_{\bT} &\seteq \max \left\{ \alpha_{\bT}(i) : 1 \leq i \leq d \right\} \\
   L_{\bT} &\seteq \max \left\{ \ell_i(A) : A \in \bT, 1 \leq i \leq d\right\}.
\end{align*}

We will take $\alpha_{\bT} \seteq 1$ if $\bT$ contains a single tile.
It is now straightforward to check that $\alpha_{\bT}$ bounds the degrees in $G(\bT)$.
\begin{lemma}
\label{lem:degree_bound}
   For a tiling $\bT \in \cT_d$ and $A \in \bT$, it holds that
   \[
      \deg_{G(\bT)}(A) \leq 3^d\cdot \alpha_{\bT}^{2d}.
   \]
\end{lemma}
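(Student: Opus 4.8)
The plan is a direct volume-packing argument in $\R^d$. First I would note that, since $E(G(\bT)) = E_1(\bT)\cup\cdots\cup E_d(\bT)$, the neighbours of $A$ in $G(\bT)$ are exactly the tiles in $N_{\bT}(A)$, so it suffices to bound $|N_{\bT}(A)|$; and since each $E_i(\bT)$ is a symmetric relation, $B \in N_{\bT}(A,i)$ implies $A \in N_{\bT}(B,i)$.

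The next step is to record two-sided comparability of side lengths for neighbours. For $B \in N_{\bT}(A,i)$ and any coordinate $j$, the inequality $\alpha_{\bT} \geq \alpha_{\bT}(A,i) \geq \ell_j(A)/\ell_j(B)$ gives $\ell_j(B) \geq \ell_j(A)/\alpha_{\bT}$, while applying the same bound with the roles of $A$ and $B$ swapped (using $A \in N_{\bT}(B,i)$) gives $\ell_j(B) \leq \alpha_{\bT}\,\ell_j(A)$. So every neighbour $B$ of $A$ satisfies $\alpha_{\bT}^{-1}\ell_j(A) \leq \ell_j(B) \leq \alpha_{\bT}\ell_j(A)$ for all $j$.

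Then, writing $A = [a_1,b_1]\times\cdots\times[a_d,b_d]$ and fixing any point $x \in A\cap B$ (which is nonempty since $\{A,B\}$ is an edge), comparing the coordinates of $x$ as a point of $A$ and as a point of $B$, together with $0 \leq \ell_j(B) \leq \alpha_{\bT}\ell_j(A)$, shows that the $j$th coordinate interval of $B$ lies in $[a_j-\alpha_{\bT}\ell_j(A),\,b_j+\alpha_{\bT}\ell_j(A)]$ for every $j$; hence every neighbour of $A$ is contained in the axis-parallel box
\[
   Q_A \seteq \prod_{j=1}^{d}\left[a_j-\alpha_{\bT}\ell_j(A),\ b_j+\alpha_{\bT}\ell_j(A)\right],
\]
which has $\vol(Q_A) = \prod_{j=1}^{d} (1+2\alpha_{\bT})\ell_j(A) \leq 3^d\alpha_{\bT}^d \vol(A)$ since $\alpha_{\bT}\geq 1$. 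Finally, because the tiles of $\bT$ are interior-disjoint, the neighbours of $A$ are interior-disjoint bodies contained in $Q_A$, each of volume $\prod_{j=1}^{d} \ell_j(B) \geq \alpha_{\bT}^{-d}\vol(A)$, so a volume count gives
\[
   |N_{\bT}(A)|\cdot \alpha_{\bT}^{-d}\vol(A) \leq \sum_{B\in N_{\bT}(A)} \vol(B) \leq \vol(Q_A) \leq 3^d\alpha_{\bT}^d\vol(A),
\]
which rearranges to $\deg_{G(\bT)}(A) = |N_{\bT}(A)| \leq 3^d\alpha_{\bT}^{2d}$. (If $\bT$ consists of a single tile then $N_{\bT}(A)=\emptyset$ and the bound is trivial.) There is no substantial obstacle here; the only point that needs care is extracting the \emph{two-sided} bound on side lengths, which is what lets a single box $Q_A$ simultaneously trap the neighbours in all $d$ axis directions and makes the packing count go through cleanly.
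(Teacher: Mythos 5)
Your proposal is correct and uses essentially the same volume-packing argument as the paper: enclose the neighbors of $A$ in an axis-parallel box obtained by dilating $A$ by $\alpha_{\bT}$ in each direction, lower-bound the volume of each neighbor by $\alpha_{\bT}^{-d}\vol(A)$, and count. The paper states $A' \subseteq \tilde{A}$ "by the definition of $\alpha_{\bT}$" somewhat tersely; your explicit derivation of the two-sided bound $\alpha_{\bT}^{-1}\ell_j(A) \leq \ell_j(B) \leq \alpha_{\bT}\ell_j(A)$ via the symmetry of the tangency relation is exactly the step needed to justify it, and is a welcome clarification rather than a different route.
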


\begin{proof}
Denote
\[
\tilde{A} \seteq \left\{(x_1, \ldots, x_d) \in \R^d:  p_i(A) - \alpha_{\bT} \ell_i(A) \leq x_i \leq p_i(A) + (1 + \alpha_{\bT}) \ell_i(A) \right\},
\] 
where $p_i(A)$ denotes the $i$th coordinate of $p(A)$.
Clearly 
\[
   \vol_d(\tilde{A}) = \prod_{i=1}^d \left((1+2\alpha_{\bT})\ell_i(A)\right) = (1+2\alpha_{\bT})^d \vol_d(A).
\]
Furthermore, by the definition of $\alpha_{\bT}$, it holds that $A' \subseteq \tilde{A}$ for $A' \in N_{\bT}(A)$.
And for any such $A'$, it holds that $\vol_d(A') \geq \alpha_{\bT}^{-d} \vol_d(A)$.  Hence,
\[
   |N_{\bT}(A)| \alpha_{\bT}^{-d} \vol_d(A) \leq (1+2\alpha_{\bT})^d \vol_d(A).
\]
Using $\alpha_{\bT} \geq 1$, this yields $\deg_{G(\bT)}(A) \leq 3^d \alpha_{\bT}^{2d}$.
\end{proof}

\begin{lemma}
\label{lem:growth_upper_bound}
Consider $\bS,\bT \in \cT_d$ and let $G = G(\bS \tileprod \bT)$.
Then for any $X \in \bS \tileprod \bT$, it holds that
\begin{equation}\label{eq:growth}
   |B_G(X,1/(\alpha^{2d}_{\bS} L_{\bT}))| \leq (3\alpha_{\bS}^2)^{d^2} (d+1)|\bT|.
\end{equation}
\end{lemma}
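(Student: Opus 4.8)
The plan is to reduce the problem to bounding how many $\bS$-tiles the ball can touch, and for that to prove the structural fact that all such $\bS$-tiles stay within $G(\bS)$-distance $d$ of the $\bS$-tile containing $X$. First a harmless reduction: if $\alpha_{\bS}^{2d}L_{\bT} > 1$ then the radius in \eqref{eq:growth} is $<1$, so $B_G(X, 1/(\alpha_{\bS}^{2d}L_{\bT})) = \{X\}$ and the claimed bound (which is at least $|\bT|\ge 1$) holds trivially; so I may assume $r \seteq 1/(\alpha_{\bS}^{2d}L_{\bT}) \ge 1$. Identify each tile $R \in \bS\tileprod\bT$ with the pair $(A,B)$, where $A\in\bS$ is the tile containing $R$ and $B\in\bT$ is the tile whose scaled copy inside $A$'s copy of $\bT$ equals $R$; in particular $\ell_i((A,B)) = \ell_i(A)\ell_i(B)$. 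Write $X = (A_0,B_0)$.

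The heart of the argument is the claim: \emph{if $(A,B)\in B_G(X,r)$, then $d_{G(\bS)}(A, A_0)\le d$.} Granting it, the lemma follows at once: every tile of $B_G(X,r)$ has the form $(A,B)$ with $A\in B_{G(\bS)}(A_0,d)$ and $B\in\bT$, so $|B_G(X,r)| \le |B_{G(\bS)}(A_0,d)|\cdot|\bT|$, and since $G(\bS)$ has maximum degree at most $3^d\alpha_{\bS}^{2d}$ by \pref{lem:degree_bound},
\[
|B_{G(\bS)}(A_0,d)| \le \sum_{j=0}^{d}\left(3^d\alpha_{\bS}^{2d}\right)^{j} \le (d+1)\left(3^d\alpha_{\bS}^{2d}\right)^{d} = (d+1)\left(3\alpha_{\bS}^{2}\right)^{d^2},
\]
which is exactly \eqref{eq:growth}.

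To prove the claim, fix a geodesic $X = R_0, R_1,\ldots,R_t$ in $G$ with $t\le r$ and let $A_j\in\bS$ be the $\bS$-tile of $R_j$, so consecutive $A_j$ are equal or $G(\bS)$-adjacent. The key estimate is a length lower bound for \emph{traversing} an $\bS$-tile in a coordinate direction: if, during a block of indices whose $\bS$-tile is a fixed $A$, the path moves from a tile touching the face $\{x_i = \text{low}\}$ of $A$ to one touching the opposite face $\{x_i = \text{high}\}$ of $A$, then this block has length at least $\tfrac{1}{2L_{\bT}}$ — indeed every tile $(A,B')\subseteq A$ has $\ell_i((A,B')) = \ell_i(A)\ell_i(B') \le L_{\bT}\,\ell_i(A)$, so each step changes the $i$-th coordinate by at most $2L_{\bT}\,\ell_i(A)$ while the block must change it by $\asymp \ell_i(A)$. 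Since $t\le r = \tfrac{2}{\alpha_{\bS}^{2d}}\cdot\tfrac{1}{2L_{\bT}}$ and $\alpha_{\bS}\ge 1$, the path affords only $O(1)$ — in fact at most two — such full traversals; every other transition between distinct $\bS$-tiles is a \emph{turn}, passing through an $\bS$-tile between two non-opposite faces. One then argues that a maximal run of turns can only cycle among the $\bS$-tiles incident to a single codimension-$\ge 2$ face of the tiling $\bS$ (the position at which the path re-enters a tile is forced by where it exited the previous one, which pins the run to one face), and that any two $\bS$-tiles incident to a common face of $\bS$ lie within $G(\bS)$-distance $d$ of one another (pass from one transverse orthant to the next by flipping a single coordinate). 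Assembling these observations shows the projection of the path to $G(\bS)$ never leaves the $d$-neighborhood of $A_0$.

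The main obstacle is precisely this last bookkeeping: cleanly separating expensive traversals from cheap turns, showing cheap turns cannot be chained to escape the vicinity of a fixed low-dimensional face of $\bS$ without paying for a traversal, and tracking the forced landing positions — and then verifying that all of this together yields net $G(\bS)$-displacement at most $d$. Everything else — the reduction, the traversal lower bound, and the final counting via \pref{lem:degree_bound} — is routine.
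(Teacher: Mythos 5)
Your overall reduction---bound the ball by the number of $\bS$-tiles in its footprint times $|\bT|$, and bound the footprint using \pref{lem:degree_bound}---matches the paper's strategy, and your central claim (that the $\bS$-tile of any ball element lies within $G(\bS)$-distance $d$ of $A_0=\hat X$) is a valid consequence of what the paper proves. The gap is in your proposed proof of that claim. The traversal/turn dichotomy is not exhaustive: you omit lateral drift within a single $\bS$-tile---entering and exiting through the same face at far-apart points---whose cost scales continuously with the displacement and is neither free nor a full $1/(2L_{\bT})$ traversal. The two structural sub-claims you lean on are unproven and suspect: that a maximal run of turns stays confined to a single codimension-$\ge 2$ face of $\bS$ (with tiles of varying sizes, the relevant corner can slide from one transition to the next, since "the re-entry point is forced" only fixes the current shared face, not a global one), and that any two $\bS$-tiles touching a common low-dimensional face lie within $G(\bS)$-distance $d$ (that would require orthant-to-orthant neighbors around a vertex to share $(d-1)$-dimensional overlaps, which a non-uniform axis-parallel tiling need not provide). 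And even granting all of these pieces, the final accounting that ``at most two traversals plus turn-runs'' yields net $G(\bS)$-displacement at most $d$ is not carried out.

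The paper avoids all of this bookkeeping by tracking the Euclidean position of the path rather than its combinatorial $\bS$-tile sequence. It defines $\tilde N_{\bS}(\hat X)$ as the $\bS$-tiles reachable from $\hat X$ by paths using at most one edge per coordinate direction, shows via the Minkowski-sum inclusion \eqref{eq:minkowski} that $\region{\tilde N_{\bS}(\hat X)}$ contains a buffer of width $\ell_i(\hat X)/\alpha_{\bS}^d$ around $\hat X$ in each direction $i$, and observes that while the path's $\bS$-tile stays in $\tilde N_{\bS}(\hat X)$ each step shifts the position by at most $L_{\bT}\,\alpha_{\bS}^d\,\ell_i(\hat X)$. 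Dividing these two quantities gives the exit-time lower bound $1/(\alpha_{\bS}^{2d} L_{\bT})$ in one stroke, with no case analysis of turns versus traversals. Replacing your turn bookkeeping with this continuous total-displacement argument is what would make your proof close.
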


\begin{proof}
   For $Y \in \bS \tileprod \bT$, let $\hat{Y} \in \bS$ denote the unique tile for which $Y \subseteq \hat{Y}$.
   Let $\Gamma_{\bS}(\hat{X})$ denote the set of paths $\gamma$ in $G(\bS)$ that originate from $\hat{X}$
   and such that $|\gamma \cap E_i(\bS)| \leq 1$ for each $i=1,\ldots,d$.
   In other words, the set of paths in $G(\bS)$ starting at $\hat{X}$ and containing at most one edge
   in every direction.
   Denote by $\tilde{N}_{\bS}(\hat{X}) \seteq \bigcup_{\gamma \in \Gamma_{\bS}(\hat{X})} V(\gamma)$
   the set of vertices reachable via such paths.

   Note that because we allow one edge in every direction,
   \begin{equation}\label{eq:minkowski}
     \hat{X} + [-\ell_1(\hat{X})/\alpha^d_{\bS}, \ell_1(\hat{X})/\alpha^d_{\bS}] \times \cdots \times [-\ell_d(\hat{X})/\alpha^d_{\bS}, \ell_d(\hat{X})/\alpha^d_{\bS}] \subseteq \region{\tilde{N}_{\bS}(\hat{X})},
  \end{equation}
   where '$+$' here is the Minkowski sum $R+S \seteq \{ r + s : r \in R, s \in S \}$.

   We will now show that
   \begin{equation}\label{eq:regions}
      \region{B_G(X,1/(\alpha^{2d}_{\bS} L_{\bT}))}  \subseteq \region{\tilde{N}_{\bS}(\hat{X})}.
   \end{equation}
   It will follow that 
   \[
      |B_G(X,1/(\alpha^{d+1}_{\bS} L_{\bT}))| \leq |\bT| \cdot |\tilde{N}_{\bS}(\hat{X})| \leq |\bT| \cdot (d+1)\left(\max_{A \in \bS} \deg_{G(\bS)}(A)\right)^d,
   \]
   and then \eqref{eq:growth} follows from \pref{lem:degree_bound}.

   To establish \eqref{eq:regions}, consider any path $\llangle X=X_0,X_1,X_2,\ldots,X_h\rrangle$ in $G$
   with $\hat{X}_h \notin \tilde{N}_{\bS}(\hat{X})$.
   Let $k \leq h$ be the smallest index for which $\hat{X}_k \notin \tilde{N}_{\bS}(\hat{X})$.
   Then:
   \begin{align}
      &{X_0},{X_1},\ldots,{X_{k-1}} \subseteq \region{\tilde N_{\bS}(\hat{X})}\label{eq:gp1} \\
      &X_{k-1} \cap \left(\partial \region{\tilde N_{\bS}(\hat{X})} \cap (0,1)^d\right) \neq \emptyset.\label{eq:gp2}
   \end{align}
   Now \eqref{eq:gp1} implies that for $j \leq k-1$, we have $\hat{X}_j \in \tilde{N}_{\bS}(\hat{X})$, which implies
   $\ell_i(\hat{X}_j) \leq \alpha^d_{\bS} \ell_i(\hat{X})$ since $\hat{X}_j$ can be reached
   from $\hat{X}$ by a path of length at most $d$ in $G(\bS)$.  It follows that
   \begin{equation}\label{eq:gp3}
      \ell_i(X_j) \leq L_{\bT} \ell_i(\hat{X}_j) \leq L_{\bT} \alpha^d_{\bS} \ell_i(\hat{X}),\qquad j \leq k-1, 1 \leq i \leq d.
   \end{equation}
   And \eqref{eq:gp2} together with \eqref{eq:minkowski} shows that
   \begin{equation}\label{eq:gp4}
      \sum_{j=0}^{k-1} \ell_i(X_j) \geq \ell_i(\hat{X})/\alpha^q_{\bS}, \qquad 1 \leq i \leq d.
   \end{equation}
   Combining \eqref{eq:gp3} and \eqref{eq:gp4} now gives
   \[
      h-1 \geq k-1 \geq \frac{1}{\alpha_{\bS}^{2d} L_{\bT}},
   \]
   verifying \eqref{eq:regions} and completing the proof.
\end{proof}

\subsection{Volume growth in iterated products}
\label{sec:vol-growth-gamma}

Our goal is now to prove \pref{lem:T-gamma-vol-growth}.  To this end, fix $d \geq 2$.

\begin{observation}
\label{obs:gamma-alpha-bound}
   For $1 \leq i \leq d-1$, we have $\alpha_{\bT^{(d)}_{\gamma}}(i) = 1$. It further holds that
	\[
      \alpha_{\bT^{(d)}_{\gamma}} = \alpha_{\bT^{(d)}_{\gamma}}(1) = \max
      \left\{ \frac{\gamma_i}{\gamma_{j}} : 1 \leq i,j \leq b, |i-j|=1 \right\}.
	\]
\end{observation}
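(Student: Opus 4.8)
The plan is to compute $\alpha_{\bT^{(d)}_{\gamma}}$ by reading the edge set of $G(\bT^{(d)}_{\gamma})$ directly off the geometry of the tiling. Recall from the construction that $\bT^{(d)}_{\gamma}$ is built from $b$ congruent horizontal slabs $[0,1]^{d-1}\times[(i-1)/b,i/b]$, $i=1,\dots,b$, stacked along the last coordinate, and that every tile inside slab $i$ is a translate of $[0,1/\gamma_i]^{d-1}\times[0,1/b]$; in particular all tiles inside a fixed slab are congruent. The first thing I would establish is a classification of edges of $G(\bT^{(d)}_{\gamma})$ by direction: the ``within-slab'' edges are exactly $E_1(\bT^{(d)}_{\gamma})\cup\cdots\cup E_{d-1}(\bT^{(d)}_{\gamma})$, while the ``between-slab'' edges — which can only join \emph{consecutive} slabs — are exactly $E_d(\bT^{(d)}_{\gamma})$. (So the single direction appearing in the second equality of the statement is the stacking direction $e_d$.)

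To prove this classification I would argue as follows. Two non-consecutive slabs have disjoint last-coordinate extents, so their tiles do not touch at all. For two tiles $A,B$ in the same slab, their last-coordinate extents coincide; hence if their interiors are disjoint, $A\cap B$ cannot contain a face orthogonal to $e_d$ of positive $(d-1)$-volume, since such a face would force the $(d-1)$-projections of $A$ and $B$ to overlap in positive volume and therefore force the interiors to overlap — so every within-slab edge lies in $E_\iota$ for some $\iota\le d-1$, and conversely any $E_\iota$-edge with $\iota\le d-1$ has a face of positive extent in the $x_d$-direction and so joins tiles of a common slab. For tiles $A$ in slab $i$ and $B$ in slab $i+1$, they can meet only on the hyperplane $\{x_d=i/b\}$, so $A\cap B$ is orthogonal to $e_d$, and it is an edge precisely when the $(d-1)$-projections overlap in positive volume, in which case $\{A,B\}\in E_d$. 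Keeping straight which faces of the axis-parallel boxes can have positive $(d-1)$-volume overlap is the only step that requires any care; everything after it is arithmetic with side lengths.

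Granting the classification, the two assertions follow immediately. For $1\le\iota\le d-1$, every edge $\{A,B\}\in E_\iota(\bT^{(d)}_{\gamma})$ joins two congruent tiles of a single slab, so $\ell_j(A)=\ell_j(B)$ for all $j$, whence $\alpha_{\bT^{(d)}_{\gamma}}(\iota)=1$ (extending the single-tile convention $\alpha=1$ to the case of an empty neighbour set). For the stacking direction, if $A$ lies in slab $i$ and $B\in N_{\bT^{(d)}_{\gamma}}(A,d)$ lies in slab $i'$ with $|i-i'|=1$, then $\ell_d(A)=\ell_d(B)=1/b$ while $\ell_j(A)=1/\gamma_i$ and $\ell_j(B)=1/\gamma_{i'}$ for $j\le d-1$, so each ratio $\ell_j(A)/\ell_j(B)$ is $1$ or $\gamma_{i'}/\gamma_i$; this gives $\alpha_{\bT^{(d)}_{\gamma}}(d)\le\max\{\gamma_i/\gamma_j:|i-j|=1\}$. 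For the matching lower bound, fix an adjacent pair $(i,i+1)$ and take the corner tiles $A=[0,1/\gamma_i]^{d-1}\times[(i-1)/b,i/b]$ and $B=[0,1/\gamma_{i+1}]^{d-1}\times[i/b,(i+1)/b]$; they share the face $[0,\min(1/\gamma_i,1/\gamma_{i+1})]^{d-1}\times\{i/b\}$ of positive $(d-1)$-volume, so $\{A,B\}\in E_d$, and the ratios $\gamma_{i+1}/\gamma_i$ (witnessed at $A$) and $\gamma_i/\gamma_{i+1}$ (witnessed at $B$) both occur. Hence $\alpha_{\bT^{(d)}_{\gamma}}(d)=\max\{\gamma_i/\gamma_j:|i-j|=1\}\ge 1$, and therefore $\alpha_{\bT^{(d)}_{\gamma}}=\max_{1\le\iota\le d}\alpha_{\bT^{(d)}_{\gamma}}(\iota)=\alpha_{\bT^{(d)}_{\gamma}}(d)$ because the other $d-1$ terms equal $1$. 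The degenerate cases $b=1$ (a single tile, where $\alpha=1$ by convention) and $b\ge 2$ with some $\gamma_i=1$ (that slab contributing no within-slab edges) do not affect the stated formula.
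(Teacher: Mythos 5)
The paper states \pref{obs:gamma-alpha-bound} as an \emph{observation} with no proof supplied, so there is no proof in the paper to compare against; your write-up is a filled-in verification of the claim. As a verification it is essentially correct: the edge classification (intra-slab edges are exactly $E_1,\dots,E_{d-1}$ and join congruent tiles; inter-slab edges are exactly $E_d$ and join only consecutive slabs) is the right key observation, the argument that a same-slab pair cannot produce an $e_d$-orthogonal intersection of positive $(d-1)$-volume is sound, and the corner-tile witnesses give the matching lower bound on $\alpha_{\bT^{(d)}_{\gamma}}(d)$.

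One point you should make explicit rather than handle silently: the displayed formula in the paper reads $\alpha_{\bT^{(d)}_{\gamma}} = \alpha_{\bT^{(d)}_{\gamma}}(1)$, which as written is inconsistent with the first sentence of the same observation (which asserts $\alpha_{\bT^{(d)}_{\gamma}}(1)=1$ among $\alpha_{\bT^{(d)}_{\gamma}}(i)=1$ for $1\le i\le d-1$). Your proof derives $\alpha_{\bT^{(d)}_{\gamma}} = \alpha_{\bT^{(d)}_{\gamma}}(d)$, and that is the correct index: the slabs $[0,1]^{d-1}\times[(i-1)/b,i/b]$ are stacked along the last coordinate, so the direction carrying a nontrivial ratio is $e_d$; this is corroborated elsewhere in the paper, e.g.\ the claim $\ell_d(A)=b^{-n}$ in the proof of \pref{lem:HabnDiam} and the fact that $\Pi_{d-1}$ (projection onto the last $d-1$ coordinates) sends $\bT^{(d)}_{\gamma}$ to $\bT^{(d-1)}_{\gamma}$. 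You allude to this in passing (``the single direction appearing in the second equality of the statement is the stacking direction $e_d$''), but it would be better practice to flag outright that the ``$1$'' in the statement is a typo for ``$d$'' rather than to prove a quietly amended claim. Finally, a small slip: you describe $b=1$ as ``a single tile,'' but $b=1$ gives a single slab with $\gamma_1^{d-1}$ tiles, which is one tile only if $\gamma_1=1$; the degenerate case is still harmless for the reasons you give (no inter-slab edges, intra-slab edges join congruent tiles).
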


Given \pref{obs:tile-prod-sequence} and \pref{obs:gamma-alpha-bound}, the next lemma is straightforward.

\begin{lemma}
Consider $\gamma = \llangle \gamma_1,\ldots,\gamma_b\rrangle$ and $\gamma' = \llangle \gamma'_1,\ldots,\gamma_{b'}\rrangle$.
If $\gamma'_1 = \gamma'_{b'}$, then
\[
\alpha_{\bT^{(d)}_{\gamma}\tileprod \bT^{(d)}_{\gamma'}} = \alpha_{\bT^{(d)}_{\gamma \tensor \gamma'}} \leq \max(\alpha_{\bT^{(d)}_{\gamma}}, \alpha_{\bT^{(d)}_{\gamma'}}).
\] 
\end{lemma}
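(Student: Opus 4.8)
The plan is to reduce everything to \pref{obs:tile-prod-sequence} and \pref{obs:gamma-alpha-bound}, so that the lemma becomes an elementary comparison of consecutive entries of a tensor product of sequences. The equality $\alpha_{\bT^{(d)}_{\gamma}\tileprod \bT^{(d)}_{\gamma'}} = \alpha_{\bT^{(d)}_{\gamma \tensor \gamma'}}$ is immediate and requires no hypothesis: by \pref{obs:tile-prod-sequence} these two expressions name the very same tiling, so the quantities $\alpha_{(\cdot)}$ coincide by definition. All the work is in the inequality, and here the key point is that $\bT^{(d)}_{\gamma\tensor\gamma'}$ is again a tiling of the special form to which \pref{obs:gamma-alpha-bound} applies.

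So write $\mu \seteq \gamma \tensor \gamma' = \llangle \mu_1,\ldots,\mu_{bb'}\rrangle$, where by \pref{obs:tile-prod-sequence} we have $\mu_{(i-1)b'+j} = \gamma_i\gamma'_j$ for $1 \le i \le b$ and $1 \le j \le b'$. By \pref{obs:gamma-alpha-bound}, $\alpha_{\bT^{(d)}_{\mu}} = \max\{\mu_k/\mu_{k+1} : 1 \le k < bb'\}$ (a max that automatically is at least $1$, since with each consecutive pair it includes the reciprocal ratio as well). So it suffices to bound each consecutive ratio $\mu_k/\mu_{k+1}$ by $\max(\alpha_{\bT^{(d)}_\gamma}, \alpha_{\bT^{(d)}_{\gamma'}})$. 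Fix $k$ and write $k = (i-1)b' + j$ with $1 \le i \le b$, $1 \le j \le b'$, and split into two cases. If $j < b'$, then $k+1 = (i-1)b'+(j+1)$ lies in the same length-$b'$ block, and
\[
   \frac{\mu_k}{\mu_{k+1}} = \frac{\gamma_i\gamma'_j}{\gamma_i\gamma'_{j+1}} = \frac{\gamma'_j}{\gamma'_{j+1}} \le \alpha_{\bT^{(d)}_{\gamma'}}
\]
by \pref{obs:gamma-alpha-bound} applied to $\gamma'$. If $j = b'$, then $i < b$ and $k+1 = ib'+1 = ((i+1)-1)b'+1$ is the first entry of the next block, so
\[
   \frac{\mu_k}{\mu_{k+1}} = \frac{\gamma_i\gamma'_{b'}}{\gamma_{i+1}\gamma'_1} = \frac{\gamma_i}{\gamma_{i+1}} \le \alpha_{\bT^{(d)}_{\gamma}},
\]
where the middle equality uses the hypothesis $\gamma'_1 = \gamma'_{b'}$ and the inequality is \pref{obs:gamma-alpha-bound} applied to $\gamma$. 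Since every consecutive ratio of $\mu$ (and hence its reciprocal) is thus at most $\max(\alpha_{\bT^{(d)}_\gamma}, \alpha_{\bT^{(d)}_{\gamma'}})$, \pref{obs:gamma-alpha-bound} yields $\alpha_{\bT^{(d)}_{\gamma\tensor\gamma'}} \le \max(\alpha_{\bT^{(d)}_\gamma}, \alpha_{\bT^{(d)}_{\gamma'}})$, finishing the proof.

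There is essentially no obstacle here: the entire content is bookkeeping of the tensor-product indexing from \pref{obs:tile-prod-sequence}. The one place the hypothesis $\gamma'_1 = \gamma'_{b'}$ is genuinely used is the block boundary in the second case, where it is exactly what cancels the stray $\gamma'$-factor $\gamma'_{b'}/\gamma'_1$; without it the boundary ratio would not be controlled by $\alpha_{\bT^{(d)}_\gamma}$ alone. I would also briefly dispatch the degenerate cases $b = 1$ or $b' = 1$ (one of the two cases above is then vacuous and the bound collapses to $\alpha_{\bT^{(d)}_{\gamma'}}$ or $\alpha_{\bT^{(d)}_\gamma}$ respectively), and recall the convention that $\alpha_{\bT} = 1$ for a one-tile tiling, which keeps all the maxima well defined.
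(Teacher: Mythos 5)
Your proof is correct and is essentially the same as the paper's: both reduce via \pref{obs:tile-prod-sequence} and \pref{obs:gamma-alpha-bound} to a computation of consecutive ratios of $\gamma \otimes \gamma'$, splitting the within-block case (ratio equals a consecutive ratio of $\gamma'$) from the block-boundary case (where $\gamma'_1 = \gamma'_{b'}$ cancels, leaving a consecutive ratio of $\gamma$). You simply spell out the indexing and degenerate cases more explicitly than the paper's terse one-line case split.
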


\begin{proof}
   Note that, by definition for $1 \leq j \leq b'$ and $0 \leq i \leq b-1$, we have
      $(\gamma \otimes \gamma')_{i b'+j} = \gamma_i \gamma'_j$.
      Hence,
   \[
      \frac{(\gamma \otimes \gamma')_{i b'+j+1}}{(\gamma \otimes \gamma')_{i b'+j}} = 
      \begin{cases}
         \frac{\gamma'_{j+1}}{\gamma'_j} & j < b' \\
         \frac{\gamma_{i+1}}{\gamma_i} \frac{\gamma'_1}{\gamma'_{b'}} = \frac{\gamma_{i+1}}{\gamma_i} & j = b',
      \end{cases}
   \]
   where we used $\gamma'_1=\gamma'_{b'}$ in the second case.
\end{proof}

\begin{corollary}
\label{cor:alpha-power-bound}
	Let $\gamma = \llangle \gamma_1,\ldots,\gamma_b\rrangle$. If $\gamma_1 = \gamma_b$, then for $n \geq 1$ we have
\[
\alpha_{\bT^{(d)}_{\gamma^{\tensor n}}}
 \leq \alpha_{\bT^{(d)}_{\gamma}}.
\]
		\end{corollary}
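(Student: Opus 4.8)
The plan is to induct on $n$, using the lemma immediately preceding the corollary as the inductive step. The base case $n=1$ is trivial: then $\bT^{(d)}_{\gamma^{\tensor n}} = \bT^{(d)}_{\gamma}$ and the asserted inequality holds with equality. So it suffices to pass from $n-1$ to $n$ for $n \geq 2$.

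For the inductive step, assume $\alpha_{\bT^{(d)}_{\gamma^{\tensor(n-1)}}} \leq \alpha_{\bT^{(d)}_{\gamma}}$. By \pref{obs:tile-prod-sequence} together with associativity of the tiling product (hence of the operation $\tensor$ on sequences), we may write $\gamma^{\tensor n} = \gamma^{\tensor(n-1)} \tensor \gamma$. Apply the preceding lemma with its $\gamma$ taken to be $\gamma^{\tensor(n-1)}$ and its $\gamma'$ taken to be $\gamma$; the lemma's hypothesis requires only that the first and last entries of the \emph{second} factor coincide, which holds here since that factor is $\gamma$ and $\gamma_1 = \gamma_b$ by assumption. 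This gives
\[
   \alpha_{\bT^{(d)}_{\gamma^{\tensor n}}} = \alpha_{\bT^{(d)}_{\gamma^{\tensor(n-1)}\tensor\gamma}} \leq \max\!\left(\alpha_{\bT^{(d)}_{\gamma^{\tensor(n-1)}}},\, \alpha_{\bT^{(d)}_{\gamma}}\right),
\]
and by the inductive hypothesis the right-hand side equals $\alpha_{\bT^{(d)}_{\gamma}}$, which closes the induction.

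There is no real obstacle here; the only point needing a moment's attention is aligning the hypotheses of the preceding lemma, namely ensuring that the ``first entry equals last entry'' condition is imposed on $\gamma$ (whose endpoints are equal by assumption) rather than on $\gamma^{\tensor(n-1)}$. Decomposing $\gamma^{\tensor n}$ as $\gamma^{\tensor(n-1)}\tensor\gamma$, rather than $\gamma\tensor\gamma^{\tensor(n-1)}$, arranges exactly this. (Alternatively, one notes via \pref{obs:tile-prod-sequence} that the endpoints of $\gamma^{\tensor(n-1)}$ are $\gamma_1^{n-1}$ and $\gamma_b^{n-1}$, which are equal anyway when $\gamma_1=\gamma_b$, so the order of the two factors is immaterial.)
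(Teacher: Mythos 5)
Your proof is correct and is essentially what the paper intends: the corollary is stated without an explicit proof, and the obvious argument is exactly the induction on $n$ via the immediately preceding lemma applied to the decomposition $\gamma^{\tensor n}=\gamma^{\tensor(n-1)}\tensor\gamma$. Your side remark that the endpoints of $\gamma^{\tensor(n-1)}$ are $\gamma_1^{n-1}$ and $\gamma_b^{n-1}$ (so the other factorization would also satisfy the lemma's hypothesis) is also accurate.
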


\begin{lemma}
\label{lem:HabnDiam}
Consider $\gamma = \langle \gamma_1,\ldots,\gamma_b\rangle$ with $\gamma_1=\gamma_b=b$.
For every $n \geq 0$, it holds that
\begin{equation}\label{eq:diam}
   b^n-1 \leq \diam\left(G(\bT_{\gamma^{\tensor n}}^{(d)})\right) \leq (d+1) b^n.
\end{equation}
\end{lemma}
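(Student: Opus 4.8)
The plan is to read off the diameter from the layered structure of $\bT_{\gamma^{\otimes n}}^{(d)}$ in the last coordinate direction. By \pref{obs:tile-prod-sequence} together with $\gamma_1=\gamma_b=b$, we have $\bT_{\gamma^{\otimes n}}^{(d)}=\bT_{\delta}^{(d)}$ for the sequence $\delta\seteq\gamma^{\otimes n}$, which has length $N\seteq b^n$ and satisfies $\delta_1=\delta_N=N$. Directly from the definition of $\bT_{\delta}^{(d)}$, every tile lies in exactly one of the $N$ \emph{layers}, where layer $j\in\{1,\dots,N\}$ consists of a $\underbrace{\delta_j\times\cdots\times\delta_j}_{d-1}$ grid of congruent boxes, each having $d$-th coordinate extent exactly $[(j-1)/N,\,j/N]$. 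Write $\lambda(A)\in\{1,\dots,N\}$ for the layer of a tile $A$. The case $n=0$ is trivial (a single tile, $N=1$), so I may assume $n\ge 1$, though the argument below covers $N=1$ as well.

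For the lower bound, I would first observe that $\{A,B\}\in E(G(\bT_{\delta}^{(d)}))$ implies $|\lambda(A)-\lambda(B)|\le 1$: the face $A\cap B$ has positive $(d-1)$-volume, hence is orthogonal to some $e_i$; if $i<d$ then the $d$-th extents of $A$ and $B$ overlap in positive length and so coincide, giving $\lambda(A)=\lambda(B)$, while if $i=d$ the layer intervals $[(\lambda(A)-1)/N,\lambda(A)/N]$ and $[(\lambda(B)-1)/N,\lambda(B)/N]$ share an endpoint, forcing $|\lambda(A)-\lambda(B)|=1$. Since there is at least one tile in layer $1$ and at least one in layer $N$, and $\lambda$ changes by at most $1$ across each edge, any path between such tiles has at least $N-1$ edges; hence $\diam(G(\bT_{\delta}^{(d)}))\ge N-1=b^n-1$.

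For the upper bound I would establish two facts. \emph{(i)} Every tile $A$ with $\lambda(A)=j\ge 2$ is adjacent to some tile of layer $j-1$: the horizontal footprints of the layer-$(j-1)$ tiles partition $[0,1]^{d-1}$ up to a null set, so some such footprint $F'$ meets the footprint $F_A$ of $A$ in positive $(d-1)$-measure, and the corresponding tile $A'$ satisfies $A\cap A'=(F_A\cap F')\times\{(j-1)/N\}$, which has positive $(d-1)$-volume. Iterating \emph{(i)} joins every tile $A$ to \emph{some} layer-$1$ tile by a path of length $\lambda(A)-1\le N-1$. \emph{(ii)} The subgraph of $G(\bT_{\delta}^{(d)})$ induced on layer $1$ is precisely the Cartesian product of $d-1$ paths on $N=\delta_1$ vertices — two of the congruent layer-$1$ boxes are adjacent iff their grid coordinates differ by $1$ in a single coordinate — so its diameter is $(d-1)(N-1)$. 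Given arbitrary tiles $A,B$, concatenating a descent of $A$ to a layer-$1$ tile $A_0$, a path within layer $1$ from $A_0$ to a layer-$1$ descendant $B_0$ of $B$, and the reverse of $B$'s descent produces a walk of length at most $(N-1)+(d-1)(N-1)+(N-1)=(d+1)(N-1)\le(d+1)b^n$.

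The only delicate parts are the two geometric claims — that adjacent tiles differ by at most one layer, and that a layer-$j$ tile always has a neighbor directly below it in layer $j-1$ — both of which rely on the fact that in $\bT_{\delta}^{(d)}$ each tile spans an entire layer interval in the $d$-th coordinate and the layers are mutually aligned; granting these, everything else is a routine verification.
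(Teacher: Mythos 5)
Your proof is correct and takes essentially the same route as the paper's: both arguments identify the bottom layer as a $(d-1)$-dimensional $b^n$-grid of diameter $(d-1)(b^n-1)$, descend an arbitrary tile to that layer in at most $b^n$ steps (using that every tile has height $b^{-n}$), and conclude via the triangle inequality, with the lower bound coming from the $b^n$ layers in the last coordinate direction. Your write-up simply makes explicit two facts the paper leaves implicit --- that adjacency changes the layer index by at most one, and that every non-bottom tile has a neighbor in the layer directly below --- but the decomposition and the counting are the same.
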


\begin{proof}
   Let $\mathcal{C}$ denote the ``bottom'' layer of tiles in $\bT^{(d)}_{\gamma^{\tensor n}}$,
   i.e., those contained in $[0,1]^{d-1} \times [0,1/b^n]$. 
   Take $G \seteq G(\bT_{\gamma^{\tensor n}}^{(d)})$.
    For every $A \in \bT^{(d)}_{\gamma^{\tensor n}}$, it holds that $\ell_d(A) = b^{-n}$.
      In particular, this yields the lower bound in \eqref{eq:diam} since any path
      from the bottom to the top (in dimension $d$) requires $b^n$ tiles.
    
      This also implies that for $A \in \bT^{(d)}_{\gamma^{\tensor n}}$ we have $d_G(A, \mathcal{C}) \leq b^n$. 
     Furthermore, as $\gamma_1 = b$ by assumption,
     we have $\left(\gamma^{\otimes n}\right)_1 = b^n$, and therefore by construction $G[\mathcal{C}]$ is a
      $(d-1)$-dimensional $b^n \times b^n \times \cdots \times b^n$ grid.
      Hence we have $\diam_G(\mathcal{C}) \leq (d-1)b^n$, and now the 
      upper bound in \pref{eq:diam} follows by the triangle inequality.
\end{proof}

\begin{proof}[Proof of \pref{lem:T-gamma-vol-growth}]
   Denote $G \seteq G(\bT_{\gamma^{\tensor n}}^{(d)})$ and fix $A \in \bT_{\gamma^{\tensor n}}^{(d)}$.
   For any $0 \leq j \leq n$,
	write $\bT_{\gamma^{\tensor n}}^{(d)} = \bT_{\gamma^{\tensor (n-j)}}^{(d)} \tileprod \bT_{\gamma^{\tensor j}}^{(d)}$
   and let $\hat{A}$ denote the copy of $\bT_{\gamma^{\tensor j}}^{(d)}$ containing $A$.
   By \pref{lem:HabnDiam}, we have $\diam(\hat{A}) \leq (d+1)b^j$, and thus
   $B_G(A, (d+1) b^j) \supseteq \hat{A}$.

   Employing \pref{lem:contract_growth_lower_bound} therefore gives
   \[
      \left|B_{G}(A, (d+1) b^j)\right| \geq |\hat{A}| = \left|\bT_{\gamma^{\tensor j}}^{(d)}\right| = \left(|\gamma|^{(d)}\right)^j,\qquad \forall A \in \bT_{\gamma^{\tensor n}}^{(d)},\ j \in \{0,1,\ldots,n\}.
   \]
   The desired lower bound now follows using monotonicity of $|B_{G}(A,r)|$ with respect to $r$.

   To prove the upper bound, first note that by
     \pref{cor:alpha-power-bound}
we have
   \[
   \alpha_{\bT_{\gamma^{\tensor n}}^{(d)}} \leq \alpha_{\bT_{\gamma}^{(d)}}  \lesssim_{\gamma} 1.
   \]
   Moreover, as $\min(\gamma) = b$, we have $L_{\bT_{\gamma^{\otimes j}}^{(d)}} = b^{-j}$, hence applying \pref{lem:growth_upper_bound} 
with $S=\bT^{(d)}_{\gamma^{\otimes (n-j)}}$ and $T=\bT^{(d)}_{\gamma^{\otimes j}}$ gives
   \[
      \left|B_{G}(A, b^{j}/\xi)\right| \lesssim_{d, \gamma} |\bT^{(d)}_{\gamma^{\otimes j}}| = \left(|\gamma|^{(d)}\right)^j,\qquad \forall A \in \bT_{\gamma^{\tensor n}}^{(d)},\ j \in \{0,1,\ldots,n\},
   \]
   for some $\xi \lesssim_{\gamma, d} 1$,
   completing the proof.
\end{proof}

\section{The size of annular separators}
\label{sec:sep-size-lower-bound}

We now prove that the graphs $G(\bT_{\gamma^{\tensor n}}^{(d)})$ do not have small annular separators.

\begin{definition}[Projection of tiles]
	For $d \geq 2$ and a tile $A \subseteq \R^d$,
   we write $\Pi_{d-1}(A) \subseteq \R^{d-1}$ for the projection of $A$ onto the last $d-1$ coordinates.
   Furthermore, for a tiling $\bT \in \cT_d$, we define
	 \[
       \Pi_{d-1}(\bT) \seteq \{ \Pi_{d-1}(A) \mid A \in \bT\},
	 \]
    and for a tile $B \in \Pi_{d-1}(\bT)$,
	 \[
       \Pi_{d-1}^{-1}(B;\bT) \seteq \{A \in \bT : \Pi_{d-1}(A) = B\}.
	 \]
\end{definition}

\begin{observation}
\label{obs:construction-projection}
	For any sequence $\gamma = \llangle \gamma_1, \ldots, \gamma_b \rrangle$, the following hold:
   \begin{enumerate}[(a)]
      \item \label{item:cp1} $\Pi_{d-1}(\bT_{\gamma}^{(d)}) = \bT_{\gamma}^{(d-1)}$.
      \item \label{item:cp2} For all $B \in \Pi_{d-1}(\bT_{\gamma}^{(d)})$, the tangency graph $G(\Pi_{d-1}^{-1}(B;\bT_{\gamma}^{(d)}))$ is a path.
      \item For all $B \neq B' \in \Pi_{d-1}(\bT_{\gamma}^{(d)})$, the sets $\Pi_{d-1}^{-1}(B;\bT_{\gamma}^{(d)})$ and
         $\Pi_{d-1}^{-1}(B';\bT_{\gamma}^{(d)})$ are disjoint.
   \end{enumerate}
See \pref{fig:projection}.
\end{observation}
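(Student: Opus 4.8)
The plan is to verify all three assertions by directly unwinding the explicit description of $\bT^{(d)}_{\gamma}$ given above. Recall that $\bT^{(d)}_{\gamma}$ is the disjoint union, over layers $i \in \{1,\dots,b\}$, of the tiles
\[
   A_{i,\vec a} \seteq [a_1/\gamma_i,(a_1+1)/\gamma_i] \times \cdots \times [a_{d-1}/\gamma_i,(a_{d-1}+1)/\gamma_i] \times [(i-1)/b,i/b],
\]
indexed by $\vec a = (a_1,\dots,a_{d-1}) \in \{0,\dots,\gamma_i-1\}^{d-1}$; equivalently, $p(A_{i,\vec a}) = (a_1/\gamma_i,\dots,a_{d-1}/\gamma_i,(i-1)/b)$, with $\ell_j(A_{i,\vec a}) = 1/\gamma_i$ for $j \le d-1$ and $\ell_d(A_{i,\vec a}) = 1/b$. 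First I would observe that $\Pi_{d-1}(A_{i,\vec a})$, which drops the first coordinate, depends only on $(i,a_2,\dots,a_{d-1})$: it has anchor $(a_2/\gamma_i,\dots,a_{d-1}/\gamma_i,(i-1)/b)$ and side lengths $(1/\gamma_i,\dots,1/\gamma_i,1/b)$, which is exactly the generic layer-$i$ tile of $\bT^{(d-1)}_{\gamma}$ with grid position $(a_2,\dots,a_{d-1})$. Since every tile of $\bT^{(d-1)}_{\gamma}$ arises in this way (choose any $a_1$), this gives (a).

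For (b), fix $B \in \Pi_{d-1}(\bT^{(d)}_{\gamma})$ and let $i$ and $(a_2,\dots,a_{d-1})$ be the layer index and grid position it determines, as above. Then $\Pi_{d-1}^{-1}(B;\bT^{(d)}_{\gamma})$ is precisely the set of $\gamma_i$ tiles $A_{i,(a_1,a_2,\dots,a_{d-1})}$ with $a_1 \in \{0,\dots,\gamma_i-1\}$, which differ only in the location of their first coordinate interval. Two such tiles, for distinct $a_1$ and $a_1'$, intersect in a set of positive $(d-1)$-dimensional volume exactly when $|a_1-a_1'|=1$ --- in which case they meet along the facet orthogonal to $e_1$ --- and in a set of zero $(d-1)$-dimensional volume otherwise. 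Hence $G(\Pi_{d-1}^{-1}(B;\bT^{(d)}_{\gamma}))$ is the path on $\gamma_i$ vertices.

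The third assertion is then immediate: the sets $\Pi_{d-1}^{-1}(B;\bT^{(d)}_{\gamma})$, as $B$ ranges over $\Pi_{d-1}(\bT^{(d)}_{\gamma})$, are by definition the fibers of the map $A \mapsto \Pi_{d-1}(A)$ on $\bT^{(d)}_{\gamma}$, so distinct values of $B$ give disjoint (in fact, partitioning) sets of tiles.

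There is no genuine obstacle here; the whole observation is a bookkeeping check against the definition of $\bT^{(d)}_{\gamma}$. The only point warranting care is the index matching in (a) --- confirming that the projected tile is honestly a tile of $\bT^{(d-1)}_{\gamma}$, with the expected side lengths ($1/\gamma_i$ in the $d-2$ grid directions and $1/b$ in the layering direction) and the expected anchor point --- which is why I would run the argument through the explicit $(p(A),\ell_1(A),\dots,\ell_d(A))$ encoding rather than relying on \pref{fig:projection}.
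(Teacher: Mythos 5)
Your proof is correct. The paper offers no written proof of this observation---it is stated as an immediate consequence of the explicit definition of $\bT^{(d)}_{\gamma}$ and supported only by a reference to \pref{fig:projection}---and your unwinding of that definition via the $(p(A),\ell_1(A),\dots,\ell_d(A))$ encoding is exactly the bookkeeping that the paper is taking for granted.
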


We will now use the family $\{ \Pi_{d-1}^{-1}(B;\bT_{\gamma}^{(d)}) : B \in \Pi_{d-1}(\bT_{\gamma}^{(d)})\}$
of pairwise disjoint paths to locate paths across annuli in $G(\bT_{\gamma^{\tensor n}}^{(d)})$.
We first remark that diameter of each such path is long in $G$.

\begin{lemma}\label{lem:inverse-diam}
   For every $B \in \Pi_{d-1}(\bT_{\gamma}^{(d)})$, 
   \[
      \diam_{G(\bT^{(d)}_{\gamma})}(\Pi_{d-1}^{-1}(B;\bT_{\gamma}^{(d)})) \geq \frac{1}{L_{\bT_{\gamma}^{(d)}}} - 1,
   \]
\end{lemma}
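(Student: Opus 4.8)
The plan is to lower bound $\diam_{G}\bigl(\Pi_{d-1}^{-1}(B;\bT_\gamma^{(d)})\bigr)$, where $G \seteq G(\bT_\gamma^{(d)})$, by exhibiting two tiles of $P \seteq \Pi_{d-1}^{-1}(B;\bT_\gamma^{(d)})$ lying at the two extremes of the first coordinate and arguing that any path in $G$ between them must be long, since crossing a single tile boundary can shift the first coordinate by at most $L_{\bT_\gamma^{(d)}}$.

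First I would make $P$ explicit. By \pref{obs:construction-projection}, $B$ is a tile of $\bT_\gamma^{(d-1)}$ and hence lies in some layer $i$; and by the construction of $\bT_\gamma^{(d)}$, the tiles of $P$ are precisely the layer-$i$ tiles whose projection onto the last $d-1$ coordinates equals $B$, which form a ``row'' of $\gamma_i$ tiles, each with $\ell_1(\cdot)=1/\gamma_i$ and with first-coordinate base points $p_1 \in \{0,\ 1/\gamma_i,\ \ldots,\ (\gamma_i-1)/\gamma_i\}$. Let $A,A' \in P$ be the tiles with $p_1(A)=0$ and $p_1(A')=1-1/\gamma_i$. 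Since $\diam_G(P) \ge d_G(A,A')$, it suffices to show $d_G(A,A') \ge 1/L_{\bT_\gamma^{(d)}}-1$.

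The key step is a discrete Lipschitz estimate for $p_1$ along edges of $G$: if $X,Y \in \bT_\gamma^{(d)}$ are adjacent, then $|p_1(X)-p_1(Y)| \le L_{\bT_\gamma^{(d)}}$. Indeed, $X\cap Y$ has nonzero $(d-1)$-dimensional volume, so for the axis-parallel boxes $X,Y$ the first-coordinate intervals $[p_1(X),p_1(X)+\ell_1(X)]$ and $[p_1(Y),p_1(Y)+\ell_1(Y)]$ either overlap in positive length (when $X\cap Y$ is orthogonal to some $e_j$ with $j\ge 2$) or meet in a single point (when $X\cap Y$ is orthogonal to $e_1$); in either case the two intervals intersect, which forces $|p_1(X)-p_1(Y)| \le \max(\ell_1(X),\ell_1(Y)) \le L_{\bT_\gamma^{(d)}}$. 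Summing this bound over a path $\langle A=X_0,X_1,\ldots,X_h=A'\rangle$ in $G$ gives $1-1/\gamma_i = |p_1(A)-p_1(A')| \le h\, L_{\bT_\gamma^{(d)}}$, and since $1/\gamma_i = \ell_1(A) \le L_{\bT_\gamma^{(d)}}$ we conclude $h \ge (1-L_{\bT_\gamma^{(d)}})/L_{\bT_\gamma^{(d)}} = 1/L_{\bT_\gamma^{(d)}}-1$, which yields the claim.

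I do not expect a genuine obstacle here; the only point deserving care is the edge case where $X$ and $Y$ are adjacent in direction $1$, so that their first-coordinate intervals touch only at an endpoint rather than overlapping in positive measure. But the inequality $|p_1(X)-p_1(Y)| \le \max(\ell_1(X),\ell_1(Y))$ still holds in that case, so the estimate applies uniformly to every edge of $G$, and the remainder is bookkeeping against the explicit description of $\bT_\gamma^{(d)}$ and the definition of $L_{\bT_\gamma^{(d)}}$.
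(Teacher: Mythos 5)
Your proposal is correct and follows essentially the same route as the paper: both observe that the preimage $\Pi_{d-1}^{-1}(B;\bT_\gamma^{(d)})$ spans the full unit interval in the first coordinate, and that any path in $G(\bT_\gamma^{(d)})$ can advance the first coordinate by at most $L_{\bT_\gamma^{(d)}}$ per tile. The paper phrases the second point as ``the Euclidean diameter of $\Pi_1(P)$ is at most $(\len(P)+1)L_{\bT_\gamma^{(d)}}$'' while you phrase it as a Lipschitz bound $|p_1(X)-p_1(Y)| \le L_{\bT_\gamma^{(d)}}$ on adjacent tiles and then absorb the leftover $1/\gamma_i$ via $1/\gamma_i \le L_{\bT_\gamma^{(d)}}$; the two bookkeepings are equivalent, yours just being slightly more explicit.
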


\begin{proof}
   Let $\Pi_1 : \R^d \to \R$ denote the projection onto the first coordinate.
   Then the Euclidean diameter of $\Pi_1(\Pi_{d-1}^{-1}(B;\bT_{\gamma}^{(d)}))$ is precisely $1$.
   On the other hand, for any path $P$ in $G(\bT_{\gamma}^{(d)})$, the Euclidean diameter
   of $\Pi_1(P)$ is at most $(\len(P)+1) L_{\bT_{\gamma}^{(d)}}$, and the result follows.
\end{proof}

\begin{figure}[h]
      \begin{center}
         \includegraphics[width=10cm]{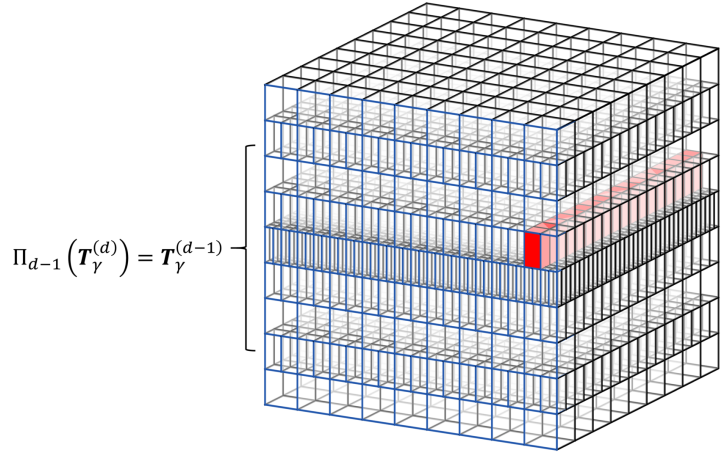}
         \caption{Projection to the last $d-1$ coordinates and $\Pi_{d-1}^{-1}(B;\bT^{(d)}_{\gamma})$ for one $B \in \Pi_{d-1}(\bT^{(d)}_{\gamma})$.\label{fig:projection}}
   \end{center}
\end{figure}

With this in hand, we can now exhibit many disjoint paths across annuli in $G$.

\begin{proof}[Proof of \pref{lem:T-gamma-separator-size}]
   Define $G \seteq G(\bT_{\gamma^{\tensor n}}^{(d)})$ and $h \seteq \lfloor \log_b(R/(d+1)) \rfloor$.
   Let $A \in \bT_{\gamma^{\tensor n}}^{(d)}$ be arbitrary.
   Write $\bT_{\gamma^{\tensor n}}^{(d)} = \bT^{(d)}_{\gamma^{\tensor (n-h)}} \circ \bT^{(d)}_{\gamma^{\tensor h}}$,
   and let $\hat{A} \in \bT^{(d)}_{\gamma^{\tensor n}}$ be the copy of $\bT^{(d)}_{\gamma^{\tensor h}}$ that contains $A$.
   By \pref{lem:HabnDiam}, we have $\diam_G(\hat{A}) \leq (d+1) b^h \leq R$, where the latter inequality
   follows from our definition of $h$.  Thus $\hat{A} \subseteq B_G(A,R)$.

   But as $\hat{A}$ is a translation of $\bT^{(d)}_{\gamma^{\tensor h}}$, by \pref{obs:construction-projection}(a),
   it holds that $\Pi_{d-1}(\bT^{(d)}_{\gamma^{\tensor h}})$ is a translation of $\bT^{(d-1)}_{\gamma^{\tensor h}}$.
   This yields
   \[
      |\Pi_{d-1}\left(B_G(A,R)\right)| \geq |\Pi_{d-1}(\hat{A})| = |\bT^{(d-1)}_{\gamma^{\tensor h}}| = \left(|\gamma|^{(d-1)}\right)^h.
   \]

   Using \pref{obs:construction-projection}(b)--(c), the sets $\{ \Pi_{d-1}^{-1}(B; \bT^{(d)}_{\gamma^{\tensor n}}) : B \in \Pi_{d-1}(B_G(A,R)) \}$ form a collection
   of vertex-disjoint paths in $G$, and \pref{lem:inverse-diam} gives a lower bound on the diameter of every such path in $G$.
   It follows that for
   \begin{equation}\label{eq:Rprime}
      R' < \frac{1}{2}\left(\frac{1}{L_{\bT_{\gamma^{\tensor n}}^{(d)}}} - 1\right),
   \end{equation}
   there are at least $(|\gamma|^{(d-1)})^h$ vertex-disjoint paths originating in $B_G(A,R)$ and leaving $B_G(A,R')$.

   Finally, note that, by assumption, we have $\min(\gamma) \geq b$, and therefore $\min(\gamma^{\otimes n}) \geq b^n$,
   implying that 
   \[
      L_{\bT_{\gamma^{\tensor n}}^{(d)}} = b^{-n}.
   \]
   Since $\diam(G) \leq (d+1) b^n$ by \pref{lem:HabnDiam}, the constraint \eqref{eq:Rprime} is implied by
   \[
      R' < \frac{1}{2} \left(\frac{\diam(G)}{d+1} - 1\right).
   \]
   We may assume that $\diam(G) > 6(d+1)$, otherwise the statement of the lemma is vaccuous
   (since we may choose the constant $c$ sufficiently small depending on $d$), in which case this is implied by
   \[
      R'\leq \frac{\diam(G)}{3(d+1)},
   \]
   completing the proof.
\end{proof}

\section{Sphere-packing representations}
\label{sec:sphere-packing}

We will now prove \pref{lem:gamma-sphere-packing} and \pref{thm:sphere-packing-ghat}.
To this end, we require some regularity from our cube packings.
Say that two closed, axis-parallel cubes $A,B \subseteq \R^d$ are {\em neatly tangent} if:

\begin{enumerate}[(i)]
   \item $A$ and $B$ are interior-disjoint.
   \item If $A$ and $B$ intersect along $(d-1)$-dimensional faces $F_A \subseteq A$ and $F_B \subseteq B$,
      then either $F_A \subseteq A \cap B$ or $F_B \subseteq A \cap B$.
\end{enumerate}

A collection $\cC$ of closed, axis-parallel cubes is {\em a neat cube packing} if every pair $A \neq B \in \cC$
is either neatly tangent or else disjoint.  The {\em aspect ratio of the packing $\cC$} is defined by
\[
   \alpha(\cC) \seteq \max \left\{ \frac{\ell(A)}{\ell(B)} : A,B \in \cC, A \cap B \neq \emptyset \right\},
\]
where $\ell(A)$ denotes the sidelength of a cube $A$.
Say that a graph $G$ is {\em admits an $\alpha$-uniform neat cube packing in $\R^d$} if $G$
is the tangency graph of a neat cube packing $\cC$ with $\alpha(\cC) \leq \alpha$.

\begin{lemma}
   If $\gamma = \langle \gamma_1,\ldots,\gamma_b\rangle$ satisfies $\gamma_1=\gamma_b=b=\min(\gamma)$ and \eqref{eq:integral} holds,
   then the graphs $G(\bT_{\gamma}^{(d)})$ and $\hat{G}_{\gamma}^{(d)}$ admit an $\alpha$-uniform neat cube packing
   in $\R^d$ with $\alpha = \alpha_{\bT_{\gamma}^{(d)}}$.
\end{lemma}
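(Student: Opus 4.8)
The plan is to realize $G(\bT_\gamma^{(d)})$ as the tangency graph of a genuine cube packing that preserves the layered combinatorial structure already present in $\bT_\gamma^{(d)}$. Recall that $\bT_\gamma^{(d)}$ is a stack of $b$ axis-parallel layers along the last coordinate, where layer $i$ is a $(d-1)$-dimensional $\gamma_i\times\cdots\times\gamma_i$ grid of congruent boxes of horizontal sidelength $1/\gamma_i$; within a layer the tangency graph is the corresponding grid graph, consecutive layers meet across the separating hyperplane, and nonconsecutive layers are disjoint. I would take $\cC$ to be the packing in which layer $i$ is a $(d-1)$-dimensional $\gamma_i\times\cdots\times\gamma_i$ grid of axis-parallel \emph{cubes} of sidelength $1/\gamma_i$, the layers stacked along the last axis with layer $i$ occupying the slab $\bigl[\sum_{j<i}1/\gamma_j,\ \sum_{j\le i}1/\gamma_j\bigr]$ in that coordinate. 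Equivalently, $\cC$ is the tiling $\tilde{\bT}_\gamma^{(d)}$ of \pref{sec:infinite-graphs} with the last coordinate rescaled by the factor $1/b$: a box of $\tilde{\bT}_\gamma^{(d)}$ lying in layer $i$ has its first $d-1$ sidelengths equal to $1/\gamma_i$ and its last sidelength equal to $b/\gamma_i$, so this rescaling turns each box into a cube. Since the construction changes neither the horizontal structure nor the order of layers, the tangency graph of $\cC$ coincides with $G(\bT_\gamma^{(d)})$ under the obvious identification of cubes with boxes.

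Next I would check that $\cC$ is \emph{neat}. Within a single layer the cubes are congruent and meet along full faces, so condition (ii) is automatic. For cubes $A$ in layer $i$ and $B$ in layer $i+1$ that meet across the separating hyperplane, I would use \eqref{eq:integral}: after relabeling we may assume $\gamma_{i+1}=t\gamma_i$ with $t\in\N$, so the $(1/\gamma_{i+1})$-grid of layer $i+1$ refines the $(1/\gamma_i)$-grid of layer $i$; hence the horizontal cell of the smaller cube $B$ is contained in that of $A$, which forces $A\cap B$ to equal the entire bottom face $F_B$ of $B$, so $F_B\subseteq A\cap B$. The reverse divisibility gives the symmetric statement with $F_A$. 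This is exactly the step where \eqref{eq:integral} is indispensable: without it consecutive grids would overlap only partially, which would break neatness and could even change which pairs of boxes are tangent.

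The aspect ratio is then immediate: two intersecting cubes of $\cC$ lie either in the same layer (sidelength ratio $1$) or in consecutive layers $i,i+1$ (ratio $\gamma_i/\gamma_{i+1}$ or its reciprocal), so
\[
   \alpha(\cC)=\max\bigl\{\gamma_i/\gamma_j : 1\le i,j\le b,\ |i-j|=1\bigr\},
\]
which by \pref{obs:gamma-alpha-bound} equals $\alpha_{\bT_\gamma^{(d)}}$. This settles the claim for $G(\bT_\gamma^{(d)})$.

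For $\hat{G}_\gamma^{(d)}=G(\bT_{\gamma^{\otimes\N}}^{(d)})$ I would run the identical recipe on the infinite tiling constructed in \pref{sec:infinite-graphs}, stacking along the last axis a sequence of grids of cubes, one per layer, the layers now indexed by $\N$. The two facts needed are that the infinite sequence of grid sizes governing $\hat{G}_\gamma^{(d)}$ still satisfies \eqref{eq:integral} and that every ratio of consecutive grid sizes occurring in it is one of the finitely many consecutive ratios of $\gamma$ itself; both are consequences of the tile-product computation behind \pref{cor:alpha-power-bound} (consecutive ratios of $\gamma\otimes\gamma'$ are consecutive ratios of $\gamma$ or of $\gamma'$). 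Neatness and the bound $\alpha(\cC)\le\alpha_{\bT_\gamma^{(d)}}$ then follow verbatim from the finite case. I do not expect any single step to be difficult; the only points that genuinely need care are the interplay of \eqref{eq:integral} with neatness and with the assertion that cubification leaves the tangency graph unchanged, and, in the infinite case, keeping the layered/refinement bookkeeping of \pref{sec:infinite-graphs} straight.
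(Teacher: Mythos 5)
Your construction is the same one the paper uses: take $\tilde{\bT}_{\gamma}^{(d)}$ (rescaled in the last coordinate so that the boxes become genuine cubes), verify neatness from the grid-refinement forced by \eqref{eq:integral}, read off $\alpha(\cC)$ from \pref{obs:gamma-alpha-bound}, and for $\hat{G}_\gamma^{(d)}$ pass to $\tilde{\bT}_{\gamma^{\otimes\N}}^{(d)}$ using \pref{cor:alpha-power-bound} to control consecutive ratios. The paper's own proof states these facts more tersely; you have correctly supplied the two details it glosses over — the explicit $1/b$ rescaling that turns the boxes of $\tilde{\bT}_\gamma^{(d)}$ into cubes, and the explicit divisibility argument showing consecutive layers meet along full facets.
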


\begin{proof}
   First take $\cC \seteq \tilde{\bT}_{\gamma}^{(d)}$, as defined in \pref{sec:infinite-graphs}.
   Under the integrality assumptions on $\gamma$, it holds that $\cC$
   is a neat packing, since one of the ratios $\gamma_{i+1}/\gamma_i$ or $\gamma_i/\gamma_{i+1}$ is an integer for every $1 \leq i < b$
(see \pref{fig:cube-packing} for an illustration).  Moreover, we have
\[
   \alpha(\cC) = \alpha_{\bT_{\gamma}^{(d)}} = \alpha_{\tilde{\bT}_{\gamma}^{(d)}}.
\]

For the second assertion, we take $\cC \seteq \bT_{\gamma^{\tensor \infty}}^{(d)}$ (as defined in \pref{sec:infinite-graphs}).
\pref{cor:alpha-power-bound} asserts that
\[
   \alpha_{\bT_{\gamma^{\otimes n}}^{(d)}} \leq \alpha_{\bT_{\gamma}^{(d)}}, \quad \forall n \geq 0,
\]
hence $\alpha_{\bT_{\gamma^{\otimes \infty}}^{(d)}} < \infty$, as desired.
\end{proof}

Given the preceding lemma, the next result suffices to prove \pref{lem:gamma-sphere-packing} and \pref{thm:sphere-packing-ghat}.

\begin{lemma}\label{lem:neat-cube-sphere-packing}
   If $G$ admits an $\alpha$-uniform neat cube packing in $\R^d$, then there are numbers $m\leq O(\alpha d)$ and $M \leq O(\alpha^2 d)$ such that
   the subdivision $[G]_m$ is $M$-uniformly sphere-packed in $\R^d$.
\end{lemma}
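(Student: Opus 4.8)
The plan is to turn each axis-parallel cube of the neat packing $\cC$ into an inscribed ball, and then ``repair'' the tangency structure along each $(d-1)$-dimensional contact face by stringing a chain of small spheres through it. First I would fix a neat cube packing $\cC$ realizing $G$ with $\alpha(\cC) \le \alpha$, and by rescaling assume every sidelength $\ell(A)$ is an integer lying in $[1,\alpha]$ (the neatness condition~(ii) together with the aspect ratio bound lets us do this after subdividing the ambient lattice; in fact the cubes of $\tilde\bT^{(d)}_\gamma$ already have dyadic-type integer sidelengths up to scaling). For each cube $A \in \cC$, place the inscribed ball $S_A$ of radius $\ell(A)/2$ centered at the center of $A$. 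Distinct inscribed balls are interior-disjoint, and $S_A, S_B$ are tangent exactly when $A,B$ share a full facet of the smaller cube — which by neatness is precisely the adjacency relation of $G$ when $\ell(A)=\ell(B)$. The issue is the ``T-junction'' case, where a large cube $A$ meets a small cube $B$ along a facet $F_B \subsetneq \partial A$: here $S_A$ and $S_B$ generally do not touch, so we must add connector spheres.

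Next I would handle the connectors. For an edge $\{A,B\}$ of $G$ with $\ell(A) \ge \ell(B)$, the contact facet is a $(d-1)$-cube of sidelength $\ell(B)$ sitting inside the facet of $A$; I want a short path of spheres from $\partial S_A$ to $\partial S_B$ that stays in a thin slab around $F_B$ and does not collide with any other ball or connector. The key geometric observation is that there is a ``buffer region'': a cylindrical neighborhood of the segment joining the two cube centers, of cross-sectional radius comparable to $\ell(B)$, is disjoint from all cubes of $\cC$ other than $A$ and $B$ — this uses that every cube touching $F_B$ has sidelength at least $\ell(B)$ (by neatness every cube along that facet contains $F_B$ in its boundary or vice versa), so nothing small intrudes. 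Inside this buffer I route a chain of $m = O(\alpha d)$ equal-radius spheres of radius $\asymp \ell(B)/(\alpha d) \asymp 1/(\alpha d)$, evenly spaced and mutually tangent, with the first tangent to $S_A$ and the last tangent to $S_B$; the number $m$ must be large enough to bridge the gap, which is at most $O(\alpha)$ times the small radius, hence $m = O(\alpha d)$ suffices (the factor $d$ accommodates making the chain thin enough to fit, after possibly nudging it off the exact center line). Doing this for every edge realizes $[G]_m$ as the tangency graph of the resulting sphere collection.

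Finally I would verify the two quantitative bounds. The subdivision parameter $m$ is the common chain length, which we arranged to be $O(\alpha d)$. For the radius ratio: the original balls $S_A$ have radii in $[\tfrac12, \tfrac{\alpha}{2}]$, the connectors have radius $\asymp 1/(\alpha d)$, and two tangent spheres are either two originals (ratio $\le \alpha$), two connectors on the same chain (ratio $1$), or an original and an adjacent connector (ratio $\lesssim \alpha^2 d$); hence $M = O(\alpha^2 d)$. One must also double-check interior-disjointness across the whole construction: originals are pairwise interior-disjoint by the packing property, a connector chain lives in the buffer region of its edge and so misses all originals except its two endpoints' balls, and two distinct connector chains live in disjoint buffer regions (buffers for edges at a common cube can be taken to emanate in distinct facet-directions or disjoint sub-slabs of a common facet, since a cube has only $2d$ facets and the small cubes tiling one facet are themselves disjoint). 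I expect the main obstacle to be exactly this last bookkeeping — pinning down a buffer region around each contact facet that is provably disjoint from all other cubes \emph{and} from all other buffers, uniformly in the packing — together with choosing the chain geometry so that the $O(\alpha d)$ small spheres actually fit inside a buffer whose width is only $\Theta(\ell(B))$; the $d$-dependence in $m$ and $M$ comes from needing the chain thin enough that its tubular neighborhood fits through the facet without grazing neighboring cubes. All the individual steps are elementary Euclidean geometry, so once the buffer regions are set up correctly the estimates are routine.
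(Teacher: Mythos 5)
Your overall strategy (inscribed balls plus ``chain'' spheres to bridge T-junctions) is close in spirit to the paper's, which also builds, inside each cube $A$, a central sphere with chains of small spheres radiating out to the contact points $c(A,B)$. The difference is where the chains live: in the paper each cube $A$ gets its own star $\cS_A$ contained entirely inside $A$, whose leaves are tangent at the boundary points $c(A,B)$, so an edge $\{A,B\}$ becomes a path through the two leaves that kiss at $c(A,B)$. Because each star sits inside its own cube, interior-disjointness of the full collection is automatic from interior-disjointness of the cubes. By contrast, your chains cross from $A$ into $B$, so you must argue disjointness of chains that share a cube by hand, and your appeal to ``disjoint buffer regions emanating in distinct facet-directions'' is the hard part of the argument, not a routine bookkeeping step; the paper's localization avoids it entirely.

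The more concrete gap is in the bound $m = O(\alpha d)$. You assert the gap from $\partial S_A$ to $\partial S_B$ is $O(\alpha)$ times the connector radius $\asymp 1/(\alpha d)$, i.e.\ of order $1/d$. That is not correct. The inscribed ball $S_A$ touches the facet of $A$ at its \emph{center} $f_A$, not at $c(A,B)$, and when $\ell(A) = \alpha$, $\ell(B)=1$ and the small cube sits near a corner, $\|f_A - c(A,B)\|$ can be as large as $\asymp \sqrt{d}\,\alpha$. Hence the geometric gap to bridge is $\Theta(\alpha\sqrt d)$, and with equal connector radius $\Theta(1/(\alpha d))$ you would need $\Theta(\alpha^2 d^{3/2})$ spheres, not $O(\alpha d)$. (At the other extreme, when $A$ and $B$ have equal side and are aligned, the gap is $0$ and no chain fits at all, which also breaks a uniform $m$-subdivision.) The paper's construction handles this by making the chain-sphere radius \emph{proportional to the chain length} $\|z_p - z_p'\|$ (which lies in $[1/8,\sqrt d]$ after normalizing the cube) and fixing the \emph{number} of spheres at $k = \lceil 6d/\e\rceil = O(\alpha d)$; the radius separation $\|z_p - z_p'\|/(2k)$ is then always smaller than the guaranteed angular spacing $\e/(2\sqrt d)$ between chains, which gives both disjointness and the $O(\alpha^2 d)$ radius ratio with no extra $\sqrt d$. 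If you insist on constant radius across all chains, you lose both the count and the disjointness simultaneously. So the key missing idea is to scale the chain radius to the chain length rather than to $\ell(B)$, and to anchor the chains at a fixed-radius central sphere inside each cube (radius $\ell(A)/4$, say) so that all chains are geometrically comparable and remain inside their cube.
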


To prove this, we need a simple result on sphere packings that satisfy
prescribed tangencies.  
For a general closed axis-parallel cube $C \subseteq \R^d$ and $\e > 0$, we define $\partial C$ to 
denote the boundary of $C$ in $\R^d$, and we use $\|\cdot\|$ for the standard Euclidean distance.
For a point $x \in \R^d$ and a set $S \subseteq \R^d$, define $\dist_2(x,S) \seteq \inf \{ \|x-y\| : y \in S \}$.
Let us also define
\[
   \partial_{\e} C \seteq \{ x \in \partial C : \# \{ F \in \cF_C : \dist_2(x, F) > \e \ell(C) \} = 2d-1 \},
\]
where $\ell(C)$ is the sidelength of $C$, and $\cF_C$ is the collection of the $2d$ facets of $C$ (i.e., the $(d-1)$-dimensional faces of $C$).
These are the boundary points that lie on exactly once face of $C$ and are $\e \ell(C)$-far from every other.

\begin{lemma}\label{lem:star-incidence}
   For every $d \geq 2$ and $\e \in (0,1/2)$, the following holds.
   Consider a closed axis-parallel cube $C \subseteq \R^d$ of sidelength $\ell$, and a set of points 
   $P \subseteq \partial_{\e} C$ such that $\|x-y\| \geq \e \ell$ for every $x \neq y \in P$.
   Then there is a finite collection of interior-disjoint spheres $\cS$ contained in $C$ such that:
   \begin{enumerate}
      \item The radius of every sphere in $\cS$ is at least $\e \ell /(60 d)$.
      \item For every $p \in P$, there is a sphere $S(p) \in \cS$ tangent to $p$.
      \item The tangency graph of $\cS$ is the $m$-subdivision of a star graph whose leaves are the spheres $\{S(p) : p \in P\}$,
         with $m = 2 + \lceil \frac{6 d}{\e} \rceil$.
   \end{enumerate}
\end{lemma}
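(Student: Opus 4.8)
The plan is to realize $\cS$ as one ``central'' sphere $S_0$ together with, for each $p\in P$, a chain of $m-1$ intermediate spheres which, with $S_0$ and a small sphere $S(p)$, forms a path of length $m$ from $S_0$ to $S(p)$. Fix $C=[0,\ell]^d$ with center $o$ and set $r_*:=\e\ell/(60d)$. For $p\in P$ on face $F_p$ with inward unit normal $\nu_p$, let $S(p)$ be the sphere of radius $r_*$ centered at $p+r_*\nu_p$: since $r_*<\e\ell$ and $p\in\partial_\e C$, this sphere lies in $C$ and is tangent to the hyperplane of $F_p$ exactly at $p$. For distinct $p,p'\in P$ one has $\|p-p'\|\ge\e\ell$ — this is the hypothesis when $F_p=F_{p'}$, and it is automatic otherwise because a $\partial_\e$-point of a face is $\e\ell$-far from every adjacent face — so the spheres $S(p)$ are pairwise interior-disjoint and not tangent to one another. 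Finally let $S_0$ be the sphere of radius $\rho_0:=\ell/4$ centered at $o$, comfortably inside $C$.

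The substance is the routing. For each $p$ I would choose a basepoint $q_p\in S_0$, lying roughly in the direction $u_p:=(p-o)/\|p-o\|$ but selected so that the family $\{q_p\}$ is pairwise well separated on $S_0$, and a piecewise-linear path $\Gamma_p$ from $q_p$ to $p$ that runs outward (essentially radially from $o$) for most of its length and then bends to approach $p$ along $\nu_p$ over a short final segment; the bend is needed so that $S(p)$ can touch the hyperplane of $F_p$ tangentially at $p$ while the chain spheres near $p$ remain inside $C$. Along $\Gamma_p$ I would place exactly $m-1$ consecutively tangent spheres — the first tangent to $S_0$, the last tangent to $S(p)$ — taking their radii equal to $r_*$ near the two ends, where the chains are most crowded (many of them emanate from $S_0$, and there is little room near $F_p$), and as large as the local geometry permits in the interior, with the counts arranged so that the chain spans $\Gamma_p$ using precisely $m-1$ spheres. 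Two elementary ingredients feed in: that $\|u_p-u_{p'}\|\gtrsim\e/d$ for distinct $p,p'\in P$ (reduce to two points of a common face and estimate via $\arctan$), and that $|P|$ is small enough that the basepoints can be spread to pairwise distance $\gtrsim\e\ell$ on $S_0$ while each $q_p$ stays within a bounded angle of its direction $u_p$.

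It then remains to check the three conclusions. Every sphere lies in $C$: chain spheres stay within their radius of $\Gamma_p\subseteq C$, and the perpendicular final segment keeps the chain inside $C$ with $S(p)$ touching $\partial C$ only at $p$. Every radius is at least $r_*=\e\ell/(60d)$: the budget $m-1=1+\lceil 6d/\e\rceil$ is large enough that even the shortest chains — to points at distance $\ell/2$ from $o$ — can be filled with spheres of average radius exceeding $r_*$, and longer chains have room to enlarge their interior radii. The tangency graph is exactly the $m$-subdivided star: consecutive chain spheres are tangent by construction, $S_0$ is tangent precisely to the first sphere of each chain, and $S(p)$ only to the last sphere of its chain; the sole nontrivial point is that spheres in different chains are interior-disjoint, which follows from the pairwise separation of the paths $\Gamma_p$ (from the basepoint separation together with $\|u_p-u_{p'}\|\gtrsim\e/d$), the radii being at most half the local separation of the paths, and the final segments of distinct chains being localized near the distinct, $\e\ell$-separated points $p$.

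I expect the routing to be the main obstacle. Keeping every radius $\ge\e\ell/(60d)$, using exactly $m-1$ spheres per chain, and keeping all the chains disjoint are three competing requirements: disjointness near $S_0$ forces small radii there, the sphere budget forces comparatively large radii for chains that must reach the near-corner points of $C$ (at distance $\sim\sqrt{d}\,\ell/2$ from $o$), and meeting the budget exactly demands a carefully tuned radius profile along each chain. Making this work uniformly in $d$ — in particular choosing the basepoints $q_p$ both well separated on $S_0$ and aligned with the direction of $p$, so that the chains de-crowd immediately rather than only gradually as radial rays diverge — is the heart of the lemma, and is where the precise constants $6d/\e$ and $\e\ell/(60d)$ get calibrated.
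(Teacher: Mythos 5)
Your high-level architecture matches the paper's (one central sphere $S_0$, one small sphere $S(p)$ per boundary point, a chain of $m-1$ intermediate spheres connecting them), but what you have written is a plan, not a proof: you explicitly defer the heart of the argument --- the disjointness of the chains, the radius lower bound, and the exact sphere count --- as ``the main obstacle'' without resolving it. The three competing constraints you identify are real, but the paper resolves them with a much simpler construction than the one you sketch, and the complications you introduce are what make your version hard to finish.

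Concretely, the paper does not bend the paths, does not vary the radii along a chain, and does not choose basepoints on $S_0$ separately from the leaf directions. After rescaling to $C=[0,1]^d$ with center $c_0$, it takes $S(p)$ of radius $\e/8$ (not $\e\ell/(60d)$) centered at $c_p = p + (\e/8)\nu_p$, and runs each chain along the \emph{straight} segment $[z_p, z'_p]\subseteq[c_0,c_p]$, where $z_p=[c_0,c_p]\cap S_0$ and $z'_p=[c_0,c_p]\cap S(p)$. There is no need to approach $S(p)$ along $\nu_p$: the chain attaches to $S(p)$ at the interior point $z'_p$, not at $p$ itself, so the bend you posit to make ``$S(p)$ touch $F_p$ at $p$ while the chain spheres stay inside $C$'' is a false requirement. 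The chain spheres along $[z_p,z'_p]$ all have the \emph{same} radius $r_p=\|z_p-z'_p\|/(2k)$ with $k=\lceil 6d/\e\rceil$; since $\|z_p-z'_p\|\in[1/8,\sqrt{d}]$ this automatically gives $r_p\in[\e/(60d),\e/(6\sqrt{d})]$, handling conclusions (1) and (3) simultaneously and removing any need for a ``carefully tuned radius profile.'' Disjointness of distinct chains is then a one-line similarity argument: first $\|z'_x-z'_y\|\ge\e/2$ from $\|x-y\|\ge\e$, then comparing the triangles $\{c_0,z'_x,z'_y\}$ and $\{c_0,\tilde z_x,\tilde z_y\}$ (projecting to the sphere of radius $\sqrt d/2$ about $c_0$) gives $\|z_x-z_y\|\ge\e/(2\sqrt{d})$, which, since the two segments lie on rays emanating from $c_0$, bounds the minimum distance between the segments. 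As $2r_p\le\e/(3\sqrt{d})<\e/(2\sqrt{d})$, the chains cannot collide. Your proposed estimate $\|u_p-u_{p'}\|\gtrsim\e/d$ is both weaker (by a $\sqrt d$ factor) than what the similarity argument yields and insufficient by itself, because angular separation of the directions controls distances only far from $c_0$; you would still need the monotone-divergence observation to control separation near $S_0$. In short: the gap is that the routing is left open, and the detours you introduce (bent paths, per-chain radius profiles, a separate choice of basepoints on $S_0$) are what make it look harder than it is --- straight radial chords with a single radius per chain suffice.
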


\begin{proof}
   By scaling and translation, it suffices to prove the lemma for $C = [0,1]^d$.
   Denote $c_0 \seteq (\frac12,\frac12,\ldots,\frac12)$,
   and define the sphere \[S_0 \seteq \{ x \in \R^d : \|x-c_0\| = 1/4 \}.\]
   For each $p \in P$, define $S(p)$ to be the unique sphere of radius $\e/8$ that is contained in $[0,1]^d$
   and such that $S(p) \cap \partial [0,1]^d = \{p\}$.  Such a sphere exists because $p \in \partial_{\e} [0,1]^d$.
   
   Let $c_p$ denote the center of $S(p)$, and let $[c_0,c_p]$ denote the line segment from $c_0$
   to $c_p$.  Define $z_p$ to be the point where $[c_0,c_p]$ intersects $S_0$, and define $z'_p$ to be the
   point where $[c_0,c_p]$ intersects $S(p)$.
   Let $[z_p,z'_p] \subseteq [c_0,c_p]$ denote the line segment connecting $z_p$ to $z'_p$.

   As $\|x-y\| \geq \e$ for all $x \neq y \in P$, it holds that $\|z'_x-z'_y\| \geq \e - 4 (\e/8) \geq \e/2$.
   Note that $[0,1]^d \subseteq B_2(c_0, \sqrt{d}/2)$, where the latter object is the Euclidean ball of radius $\sqrt{d}/2$
   about $c_0$.  Let $\tilde z_x$ denote the point where the line through $[c_0,c_p]$ intersects $\partial B_2(c_0,\sqrt{d}/2)$.
   Then we have $\|\tilde{z}_x - \tilde{z}_y\| \geq \|z'_x - z'_y\|$,
   and by similarity of the triangles defined by $\{c_0,\tilde{z}_x,\tilde{z}_y\}$ and $\{c_0,z'_x,z'_y\}$, it holds that
   \[
      \|z'_x - z'_y\| \geq \frac{\|\tilde{z}_x-\tilde{z}_y\|}{\sqrt{d}} \geq \frac{\e}{2\sqrt{d}}.
   \]
   It follows that
   \begin{equation}\label{eq:separated-segments}
      \min \{ \|a-b\| : a \in [z_x,z'_x], b \in [z_y,z'_y]\} \geq \|z_x-z_y\| \geq \frac{\e}{2\sqrt{d}}.
   \end{equation}
   Note also that for every $p \in P$,
   \begin{equation}\label{eq:max-len}
      \frac18 \leq \frac{1}{4} - \frac{\e}{4} \leq \|c_0 - c_p\| - \left(\frac{1}{4} + \frac{\e}{8}\right) \leq \left\|z_p-z'_p\right\| \leq \|c_0-c_p\| \leq \diam_2([0,1]^d) \leq \sqrt{d},
   \end{equation}
   where we have used $\e < 1/2$.

   Let $\gamma_p : [0, \|z_p-z'_p\|] \to [z_p,z'_p]$ be a parameterization of $[z_p,z'_p]$ by arclength.
   Define $k \seteq \lceil \frac{6 d}{\e}\rceil$ and $r_p \seteq \frac{\|z_p-z'_p\|}{2k}$, and let $\tilde{S}_p$
   be the collecion of interior-disjoint spheres of radius $r_p$ centered at the points
   \[
      \gamma_p(r_p), \gamma_p(2 r_p), \gamma_p(4 r_p), \ldots, \gamma_p(2(k-1) r_p).
   \]
   Note that the tangency graph of $\tilde{S}_p$ is a path, and that the first sphere is tangent to $S_0$,
   while the last is tangent to $S(p)$.

   By \eqref{eq:max-len}, for each $p \in P$, we have $r_P \in \left[ \frac{\e}{60 d}, \frac{\e}{6\sqrt{d}}\right]$.
   In particular, \eqref{eq:separated-segments} implies that if $S \in \tilde{S}_p$ and $S' \in \tilde{S}_{p'}$ for $p \neq p' \in P$, then
   $S$ and $S'$ are disjoint.

   Thus the collection of spheres
   \[
      \cS \seteq \{ S_0 \} \cup \{ S(p) : p \in P \} \cup \bigcup_{p \in P} \tilde{S}_p
   \]
   satisfies the conditions of the lemma.
\end{proof}

\begin{proof}[Proof of \pref{lem:neat-cube-sphere-packing}]
   Suppose that $\cC$ is a neat cube packing whose tangency graph is $G$ and such that $\alpha(\cC) \leq \alpha$.
   For every pair $A,B \in \cC$ with $A \cap B \neq \emptyset$, let $c(A,B)$ be the center of mass of $A \cap B$.

   Define the set of points $P_A \seteq \{ c(A,B) : B \in \cC, A \cap B \neq \emptyset \}$.  Since $\cC$ is a neat cube packing
   and we have $\ell(B) \geq \ell(A)/\alpha$, it follows that with $\e \seteq 1/(2\alpha)$, we have $P_A \subseteq \partial_{\e} A$, and
   \[
      \|x-y\| \geq \e \ell(A), \quad \forall x \neq y \in P_A.
   \]
   Therefore we can use \pref{lem:star-incidence} to replace each cube $A \in \cC$ by a corresponding collection $\cS_A$ of spheres
   whose tangency graph is the $m$-subdivision of a star, the leaves of which are tangent to the points in $P_A$.
   Note that \pref{lem:star-incidence} gives $m = 2 + \lceil 12 \alpha d \rceil$.

   Moreover, any two adjacent spheres have their ratio of radii contained in the interval $[M^{-1},M]$ for
   \[
      M \seteq \alpha \frac{60 d}{\e} = 120 \alpha^2 d.
   \]
   Thus if we define $\cS \seteq \bigcup_{A \in \cC} \cS_A$, then $[G]_{2m}$ is the tangency graph of $\cS$,
   and the corresponding sphere-packing is $M$-uniform.
\end{proof}

\subsection*{Acknowledgements}

This research was partially supported by NSF CCF-2007079 and a Simons Investigator Award.

\bibliographystyle{alpha}
\bibliography{diffusive}

\end{document}